\theoremstyle{definition}
\newtheorem{definition}{Definition}
\newtheorem{example}{Example}
\theoremstyle{plain}
\newtheorem{theorem}{Theorem}
\newtheorem{lemma}{Lemma}
\newtheorem{proposition}{Proposition}
\newtheorem{corollary}{Corollary}
\def\RR{\mathbb{R}}
\def\NN{\mathbb{N}}
\def\states{\mathcal X}
\def\allgambles{\mathcal L}
\def\comm{\leftrightsquigarrow}
\def\supp{\textup{supp}\,}
\def\essmax{\textup{ess\,max}\,}
\newcommandx{\acs}[1][1=]{\stackrel{#1}{\rightsquigarrow}} 
\newcommandx{\sacs}[1][1=]{\stackrel{#1}{\rightarrow}} 
\newcommandx{\up}[1]{\overline{#1}} 
\newcommandx{\low}[1]{\underline{#1}} 
\newcommandx{\chf}[1]{1_{#1}} 
\newcommandx{\state}[1][1=x]{#1} 
\newcommandx{\cclass}[1][1=D]{\mathcal{#1}} 
\newcommandx{\sset}[1][1=A]{#1} 
\newcommandx{\ifnal}[1][1=E]{#1} 
\newcommandx{\fnal}[1][1=P]{#1} 
\newcommandx{\toper}[1][1=T]{#1} 
\newcommandx{\ptoper}[1][1=t]{#1} 
\newcommandx{\uifnal}[1][1=E]{\up{\ifnal[#1]}} 
\newcommandx{\utoper}[1][1=T]{\up{\toper[#1]}} 
\newcommandx{\lifnal}[1][1=E]{\low{\ifnal[#1]}} 
\newcommandx{\ltoper}[1][1=T]{\low{\toper[#1]}} 
\newcommandx{\psets}[1][1=\ifnal]{\psi_{{#1}}} 
\newcommandx{\acsets}[1][1=\toper]{\Theta_{{#1}}} 
\newcommandx{\gamble}[1][1=f]{#1} 
\def\statey{\state[y]}
\def\statex{\state[x]}
\begin{document}
\author{Damjan \v{S}kulj}
\title{A classification of invariant distributions and convergence of imprecise Markov chains}

\maketitle
\begin{abstract}
We analyse the structure of imprecise Markov chains and study their convergence by means of accessibility relations. We first identify the sets of states, so-called \emph{minimal permanent classes}, that are the minimal sets capable of containing and preserving the whole probability mass of the chain. These classes generalise the \emph{essential classes} known from the classical theory. We then define a class of extremal imprecise invariant distributions and show that they are uniquely determined by the values of the upper probability on minimal permanent classes. Moreover, we give conditions for unique convergence to these extremal invariant distributions. 

\medskip \noindent
{\bfseries Keywords} imprecise Markov chain, invariant expectation functional, accessibility relation, convergence of imprecise Markov chains

\medskip \noindent
{\bfseries Math. subj. class. (2000)} 60J10, 93B35,  47N30
\end{abstract}

\section{Introduction}
The term \emph{imprecise Markov chain} denotes a Markov chain whose parameters are not fully determined, but rather some partial information about them is given in terms of models of \emph{imprecise probabilities}. The systematic modelling of Markov chains with partially determined parameters was initiated by Hartfiel and Seneta (see e.g. \citep{hart:94, hart:98} and the references therein) under the name 'Markov set chains'. Their theory is not formally built on the models of imprecise probabilities, such as Walley's model (\citep{walley:91}), although they develop similar concepts and methods. More formally the models of imprecise probabilities have been involved in the study of Markov chains by the approaches proposed by \citet{skulj:09}, where general models of interval probabilities (see e.g. \citep{weic:01}) are involved, and \citet*{decooman-2008-a}, who introduce the approach with upper and lower expectation functionals. 

One of the main considerations of all mentioned approaches are the conditions for unique convergence to an invariant (imprecise) limit distribution. It has been shown that under certain assumptions imprecise Markov chains behave in a similar way as the precise ones, allowing the possibility to generalise concepts, such as accessibility relations \citep{decooman-2008-a}, coefficients of ergodicity \citep{hart:98, skulj:hable:12}, and, for the chains with certain absorption, the invariant distributions conditional on non-absorption \citep{crossman:2010}.

The main topic of the present paper are the imprecise Markov chains that do not converge uniquely, which means that the limit imprecise distribution depends on the initial state or distribution over the set of states. In the classical theory this case is relatively easily translated to the case with unique convergence by partitioning the set of states into disjoint \emph{communication classes}, which are then either \emph{essential} or \emph{transient}. Essential classes have the property that the probability mass concentrated within them and in the transient classes leading to them is preserved within the class. In the long term it is then either distributed among states in a stationary way or moves between the states in \emph{cycles}. Transient classes, though, in the long term transfer all the probability mass to the essential classes. The long term behaviour of a precise Markov chain is then described with the proportions of the probability mass within each essential class, that can then be examined, each separately without influencing the others, as either ergodic or cyclic single communication classes. (For more details see e.g. \citep{Seneta2006}.)

The situation in the case of imprecise Markov chains is considerably more complex. In the precise case the probability mass can only move from one state into another, which is determined by strictly positive transition probability. In the imprecise case, though, it can happen that instead we have a probability interval that has a strictly positive upper and zero lower bound. Therefore, we have to take into account both scenarios -- where a transition is possible and where it is not -- within the same model (for better clarification see Example~\ref{ex-1}). This leads to numerous interesting new possibilities that are not observed in the precise theory. One immediate consequence is that there are different possible ways to define communication relations between states. \emph{Say for instance, when the upper probability of transition from a state $\statex$ to another state $\statey$ is positive and the lower one zero, can we say that $\statey$ is accessible from $\statex$ or not?} We argue that questions of this kind cannot be answered with a single accessibility relation. Therefore, in addition to the accessibility relation defined by \citet{decooman-2008-a}, that is based on upper probabilities and which we describe in Subsection~\ref{ss-war}, we define another accessibility relation in Subsection~\ref{ss-sar}. This relation is defined between sets of states rather than between single states. Combining both relations will allow us to define minimal classes of states, called \emph{minimal permanent classes}, that generalise essential communication classes from the classical theory in the sense that they  are the minimal sets of states capable of possibly preserving the entire probability mass concentrated within them. However, unlike in the case of essential classes in the classical theory, this property does not prevent the possibility that the mass that can possibly be preserved within one class is entirely or partially transferred elsewhere, that is to other minimal permanent classes. Moreover, these classes are neither necessarily disjoint. In effect, a complex network of interdependence between the classes in possible in general. Our first main result in Theorem~\ref{thm-main-1} shows that nevertheless all invariant imprecise distributions that possibly concentrate the entire probability mass in one or more minimal permanent classes are uniquely determined by the set of such classes. Further, our second main result in Theorem~\ref{thm-main-2} shows that when probability mass is at least in the limit concentrated in these classes the imprecise distributions over the set of states converge to the unique invariant distributions. These two theorems present the main results of this paper. 

In addition to the main theorems, we believe that the analysis of the pair of accessibility relations contributes to better understanding of imprecise Markov chains in general, describing the complexity of their structure, and on the other hand, the induced minimal permanent classes seem to present the smallest irreducible units, counterparts to the essential communication classes in the classical Markov chains. Although, considerably more apparently irreducible complexity remains in the imprecise case due to complex interdependencies that may be present between the classes. 

The paper has the following structure. In the next section we formally describe the model of imprecise Markov chains. In Section~\ref{s-arc} we first describe the accessibility relation based on upper transitions, previously defined by \citet{decooman-2008-a}, which here is referred as the weak accessibility relation, and state the unique convergence theorem, that was one of the main results in the above reference. The so-called strong accessibility relation is then defined in Subsection~\ref{ss-sar}, which allows the definition of the so-called permanent classes in Subsection~\ref{ss-pc}. In Section~\ref{s-cuc} we finally use these results to prove our main results in Theorem~\ref{thm-main-1} and Theorem~\ref{thm-main-2}. 

\section{Imprecise Markov chains}
%
%

Let $\states$ be a non-empty finite set of \emph{states} and $\{ X_n \}_{n\in \NN}$ a sequence of random variables taking values in $\states$. The probability distributions of the variables are assumed to be imprecise. Throughout the paper an \emph{imprecise probability (distribution)} denotes a closed convex set of expectation functionals\footnote{The notation here is very simplified version of the notation used by \citet{walley:91}, which is still enough general and clear for our purpose.} $\ifnal$. The elements of this set will be denoted by $\fnal$. 
Sets of expectation functionals are of course equivalent to sets of corresponding probability distributions. Instead of the term imprecise probability distribution, we will call $\ifnal$ an \emph{imprecise expectation functional (IEF)}\footnote{Imprecise expectation functionals correspond to \emph{lower} and \emph{upper previsions} which constitute one of the key objects studied in the theory of imprecise probabilities. The way how imprecise expectation functionals are used here corresponds to \emph{credal sets} of lower and upper previsions.}. 

In our case where the set of states is finite every IEF is a compact set. A convenient way to represent IEFs is via their lower and upper expectation functionals as follows. Let $\ifnal$ be an IEF and $\gamble$ a real valued map $\states\to \RR$. Then the \emph{lower} and the \emph{upper expectation} with respect to $\ifnal$ are 
\begin{align*}
 \lifnal (\gamble) = \min_{\fnal\in\ifnal} \fnal(\gamble) \\
 \intertext{and}
 \uifnal (\gamble) = \max_{\fnal\in\ifnal} \fnal(\gamble).
\end{align*}
We will call real valued maps $\gamble\colon \states\to\RR$ \emph{gambles}. The set of all gambles on $\states$ is denoted by $\allgambles(\states)$, or simply by $\allgambles$. The values of $\fnal(\gamble)$ for a given gamble $\gamble$ and all functionals $\fnal\in\ifnal$ therefore form the interval $[\lifnal(\gamble), \uifnal(\gamble)]$, and we may therefore identify $\ifnal(\gamble)$ with this interval. In particular, when $\gamble = \chf{\sset}$, where $\sset$ is some set of states, the value $\fnal(\chf{\sset})$ denotes the probability of $\sset$, and $\lifnal (\chf{\sset})$ and $\uifnal (\chf{\sset})$ are the \emph{lower} and the \emph{upper probability} of $\sset$. 

The set of IEFs is ordered with respect to set inclusion, and in the case where $\ifnal \subseteq \ifnal[F]$ we say that $\ifnal$ is \emph{more committal}\footnote{This expression is a consequence of the fact that a more committal imprecise expectation functional is 'less imprecise', which in Walley's theory commits to accepting more gambles.} than $\ifnal[F]$. Clearly, an IEF $\ifnal$ is more committal than $\ifnal[F]$ if and only if $\lifnal(\gamble) \ge \lifnal[F](\gamble)$, or equivalently, $\uifnal(\gamble) \le \uifnal[F](\gamble)$, for every gamble $\gamble$. 

The subset of gambles mapping $\states$ to the interval $[0, 1]$ will be denoted by $\allgambles_1$, and it is a compact set. It can be easily verified that the values of imprecise expectation functionals on $\allgambles_1$ uniquely determine their values on the whole $\allgambles$. Particularly, $\uifnal[E](f) \le \uifnal[F](f)$ for every gamble $f\in \allgambles_1$ implies $\ifnal[E]\subseteq \ifnal[F]$.

The least committal imprecise expectation functional on $\allgambles(\states)$ is the \emph{vacuous} IEF which contains all expectation functionals. The vacuous IEF will usually be denoted by $\ifnal[V]$. For every gamble  $\gamble\in\allgambles(\states)$ we have 
\[ \lifnal[V](\gamble) = \min_{\state\in\states}\gamble(\state) \qquad\text{and}\qquad \uifnal[V](\gamble) = \max_{\state\in\states}\gamble(\state). \]

In the sequel we will list some basic properties of lower and upper expectation functionals. First we have that 
\begin{equation*}
 \lifnal(\gamble) = -\uifnal(-\gamble). 
\end{equation*}
Al the properties of lower expectation functionals are therefore easily derived from the corresponding properties of the upper expectation functionals, which are the following. Let $\ifnal$ be an IEF, $\gamble, \gamble_1, \gamble_2$ arbitrary gambles, $\lambda$ a non-negative real constant, $\mu$ an arbitrary real constant, and $\chf{\sset}$ the characteristic function of a set $\sset\subseteq \states$. Then:
\begin{enumerate}[(i)]
 \item $\min_{\state\in\states} \gamble(\state) \le \uifnal(\gamble)\le \max_{\state\in\states} \gamble(\state)$ (boundedness);
 \item $\uifnal(\gamble_1+\gamble_2) \le \uifnal(\gamble_1) + \uifnal(\gamble_2)$ (subadditivity);
 \item $\uifnal(\lambda\gamble) = \lambda\uifnal(\gamble)$ (non-negative homogeneity);
 \item $\uifnal(\gamble + \mu\chf{\states}) = \uifnal(\gamble) + \mu$ (constant additivity);
 \item if $\gamble_1\le \gamble_2$ then $\uifnal(\gamble_1) \le \uifnal(\gamble_2)$ (monotonicity). 
\end{enumerate}
Conversely, every functional satisfying conditions (i)-(iii) above is an upper expectation functional for some IEF.

We will often need the following simple result. 
\begin{lemma}\label{x-krat-char}
Let $\fnal$ be an expectation functional, $f$ a gamble and $a\ge 0$ a constant. Then 
\begin{enumerate}[{(i)}]
 \item if $f\ge 0$ then $\fnal(\gamble) \ge a\fnal(\chf{\{ \gamble\ge a \}});$
 \item $\fnal(\gamble) = \bar\gamble$ if and only if $\fnal(\chf{\{ \gamble = \bar\gamble \}}) = 1$, 
\end{enumerate}
where $\bar\gamble = \max_{\state\in\states}\gamble(\state)$. The above inequalities are also valid for $\lifnal$ and $\uifnal$, where $\ifnal$ is an IEF. 
\end{lemma}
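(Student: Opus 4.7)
The plan is to handle (i) by a pointwise inequality plus monotonicity, and (ii) by reducing to the linear case and then lifting to upper/lower functionals via the compactness of the credal set.

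For (i), let $\sset = \{\gamble \ge a\}$. Since $\gamble \ge 0$, a case split on whether $\statex \in \sset$ gives the pointwise bound $\gamble \ge a \chf{\sset}$: on $\sset$ we have $\gamble(\statex) \ge a = a\chf{\sset}(\statex)$, while on $\sset^c$ we have $\gamble(\statex) \ge 0 = a\chf{\sset}(\statex)$. Applying non-negative homogeneity and monotonicity (properties (iii) and (v)) yields $\fnal(\gamble) \ge a\fnal(\chf{\sset})$. The same two properties hold for $\lifnal$ and $\uifnal$, so the inequality extends to both without change.

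For the forward direction of (ii), set $\cclass[B] = \{\gamble = \bar\gamble\}$ and $\gamble' = \bar\gamble - \gamble \ge 0$. By constant additivity, $\fnal(\gamble) = \bar\gamble$ gives $\fnal(\gamble') = 0$. Finiteness of $\states$ is essential here: if $\cclass[B]^c \neq \emptyset$, then $c := \min_{\statex \in \cclass[B]^c} \gamble'(\statex) > 0$, so $\gamble' \ge c\chf{\cclass[B]^c}$, and (i) forces $0 = \fnal(\gamble') \ge c\fnal(\chf{\cclass[B]^c})$, hence $\fnal(\chf{\cclass[B]^c}) = 0$ (using non-negativity from boundedness). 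Linearity of $\fnal$ then gives $\fnal(\chf{\cclass[B]}) = 1$. Conversely, if $\fnal(\chf{\cclass[B]}) = 1$, then $\fnal(\chf{\cclass[B]^c}) = 0$, and writing $\gamble = \bar\gamble\chf{\cclass[B]} + \gamble\chf{\cclass[B]^c}$ and applying linearity together with the bound $|\gamble\chf{\cclass[B]^c}| \le \max|\gamble|\cdot\chf{\cclass[B]^c}$ yields $\fnal(\gamble) = \bar\gamble\cdot 1 + 0 = \bar\gamble$.

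To extend (ii) to $\uifnal$ and $\lifnal$, use the compactness of $\ifnal$. For $\uifnal$: if $\uifnal(\gamble) = \bar\gamble$, then some $\fnal \in \ifnal$ attains the maximum (compactness), so by the linear case $\fnal(\chf{\cclass[B]}) = 1$, whence $\uifnal(\chf{\cclass[B]}) \ge 1$, and equality follows from boundedness. The converse is symmetric. For $\lifnal$: the equality $\lifnal(\gamble) = \bar\gamble$ together with $\fnal(\gamble) \le \bar\gamble$ for every $\fnal \in \ifnal$ forces $\fnal(\gamble) = \bar\gamble$ for \emph{all} $\fnal \in \ifnal$, so $\fnal(\chf{\cclass[B]}) = 1$ for all $\fnal \in \ifnal$ by the linear case, giving $\lifnal(\chf{\cclass[B]}) = 1$; the converse is again symmetric.

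The only delicate step is the forward direction of (ii) in the linear case, because one needs a strictly positive gap on $\cclass[B]^c$ to convert $\fnal(\gamble') = 0$ into $\fnal(\chf{\cclass[B]^c}) = 0$; this is exactly where finiteness of $\states$ is used, and everything else is routine bookkeeping with the properties (i)--(v) and part (i) of the lemma itself.
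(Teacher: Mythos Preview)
Your proof is correct and, for part (i), matches the paper's argument exactly: the pointwise bound $a\chf{\{\gamble\ge a\}}\le \gamble$ followed by monotonicity and homogeneity. For part (ii) the paper simply writes ``(ii) is obvious'' and gives no argument at all, so your detailed treatment---reducing to $\fnal(\bar\gamble-\gamble)=0$, exploiting the strictly positive gap on $\{\gamble<\bar\gamble\}$ via finiteness of $\states$, and then lifting to $\uifnal$ and $\lifnal$ by compactness of the credal set---is a legitimate and careful way to substantiate what the paper leaves implicit; there is no methodological divergence to speak of, only a difference in the level of detail supplied.
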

\begin{proof}
To see (i) note that $a\chf{\{ \gamble\ge a \}}\le \gamble$ and $ a\fnal(\chf{\{ \gamble\ge a \}}) = \fnal(a\chf{\{ \gamble\ge a \}}) \le  \fnal(\gamble)$ because of monotonicity of $P$. (ii) is obvious.
\qed
\qed
\end{proof}
We continue with the description of imprecise Markov chains. The imprecise probability distribution corresponding to the random variable $X_n$ is described by an IEF $\ifnal_n$. Given the distribution of $X_n$, the distribution of $X_{n+1}$ (and consequently of the variables corresponding to later steps) can be found via \emph{transition probabilities}:
\begin{equation}\label{t-prob}
 P(X_{n+1} =\cdot | X_{n} = \statex). 
\end{equation}
When the above transition probability is imprecise, it will be denoted with an IEF $\toper(\cdot | \state)$. Given a gamble $\gamble$ the conditional IEF $\toper(\cdot | \state)$ maps it to the set $\toper(\gamble | \state)$. With a fixed gamble the latter expression is an interval mapping on the set of states $\states$, denoted by $\toper \gamble$. Every $\toper \gamble(\state) = \toper(\gamble|\state)$ denotes a closed interval $[\ltoper\gamble(\state), \utoper\gamble(\state)] = [\ltoper(\gamble|\state), \utoper(\gamble|\state)]$. Hence, $\toper\gamble$ is the set of gambles $\gamble[h]$ satisfying $\ltoper\gamble(\state) \le \gamble[h](\state)\le \utoper\gamble(\state)$. The so defined operator $\toper$ is called an \emph{imprecise transition operator (ITO)}.

An ITO is a closed convex set of (precise) transition operators; however, not every such set is an ITO. (We will denote precise transition operators with small letters, usually $\ptoper$.) It has to be additionally required that a closed convex set of transition operators  has \emph{separately specified rows} to be an ITO, which means that if the transition operators $\ptoper$ and $\ptoper'$ belong to $\toper$ then for every $\state\in\states$ the operator $\ptoper''$ such that 
\begin{equation*}
 \ptoper'' \gamble (\statey) = \begin{cases}
                               \ptoper \gamble (\statey) & \text{if } \statey \neq \state \\
                               \ptoper' \gamble (\statey) & \text{if } \statey = \state
                              \end{cases}
\end{equation*}
also belongs to $\toper$. It is easy to verify that separately specified rows of an ITO ensure that there is some $\ptoper\in\toper$ for every $\gamble[h]\in \toper\gamble$ so that $\gamble[h]=\ptoper \gamble$, and in particular, that $\ltoper \gamble$ and $\utoper \gamble$ belong to $\toper \gamble$. 

In order to allow calculations with imprecise transition operators, we must extend the definition of $\toper \gamble$ from gambles to intervals of gambles of the form 
\[ [\underline{\gamble}, \overline{\gamble}] := \{ \gamble[h]\colon \underline{\gamble}\le\gamble[h]\le \overline{\gamble} \} . \] 
We define
\[ \toper{} [\underline{\gamble}, \overline{\gamble}] = [\ltoper\underline{\gamble}, \utoper\,\overline{\gamble}],  \] 
which clearly contains exactly all the gambles of the form $\ptoper \gamble$ where $\ptoper\in\toper$ and $\gamble\in [\underline{\gamble}, \overline{\gamble}]$. 

Let an imprecise Markov chain be given in terms of an imprecise initial distribution in the form of IEF $\ifnal_0$, and an ITO $\toper$. The imprecise distribution at time $n$, given in the form of IEF $\ifnal_n$, is calculated with
\begin{equation}\label{mc-calc}
 \ifnal_n (\gamble) = \ifnal_0 (\toper^n\gamble), 
\end{equation}
where $\toper^n \gamble$ is given recursively with $\toper^0\gamble = \gamble$ and $\toper^{k+1}\gamble = \toper{} [\toper^k\gamble]$ for every $k\ge 0$.  

The following holds.
\begin{proposition}\label{convex-ief}
 Let IEFs $\ifnal$ and $\ifnal[F]$ be given. Then 
 \[ (\alpha \ifnal + \beta \ifnal[F])\toper = \alpha \ifnal\toper + \beta \ifnal[F]\toper \] 
 if $\alpha, \beta\ge 0$ and $\alpha + \beta = 1$. 
\end{proposition}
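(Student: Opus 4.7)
The plan is to reduce the claimed equality of IEFs to an equality of their upper expectations on every gamble, and then to strip $\toper$ off each upper expectation by exploiting the separately specified rows of $\toper$. Recall from the excerpt that two IEFs coincide as soon as their upper expectations agree on all gambles.

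The key intermediate identity is this: for every IEF $\ifnal$ and every gamble $f$, the upper expectation of $\ifnal\toper$ at $f$ equals $\uifnal(\utoper f)$. Viewing $\ifnal\toper$ as the set of functionals of the form $\gamble[g]\mapsto \fnal(\ptoper \gamble[g])$ with $\fnal\in\ifnal$ and $\ptoper\in\toper$, monotonicity of each $\fnal$ gives $\fnal(\ptoper f)\le \fnal(\utoper f)$ for every $\ptoper\in\toper$; separately specified rows guarantee $\utoper f\in\toper f$, so this supremum is attained. Taking the supremum over $\fnal\in\ifnal$ then gives $\uifnal(\utoper f)$.

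Next, for a fixed gamble $\gamble[g]$ and any IEFs $\ifnal$, $\ifnal[F]$, one has $\overline{\alpha\ifnal+\beta\ifnal[F]}(\gamble[g])=\alpha\uifnal(\gamble[g])+\beta\uifnal[F](\gamble[g])$, because $(\alpha\fnal+\beta\fnal[Q])(\gamble[g])=\alpha\fnal(\gamble[g])+\beta\fnal[Q](\gamble[g])$ and $\fnal\in\ifnal$, $\fnal[Q]\in\ifnal[F]$ may be chosen independently to maximise their two contributions. Chaining this elementary linearity with the previous reduction yields, for every gamble $f$,
\begin{align*}
\overline{(\alpha\ifnal+\beta\ifnal[F])\toper}(f)
 &= \overline{\alpha\ifnal+\beta\ifnal[F]}(\utoper f)
  = \alpha\uifnal(\utoper f)+\beta\uifnal[F](\utoper f) \\
 &= \alpha\,\overline{\ifnal\toper}(f)+\beta\,\overline{\ifnal[F]\toper}(f)
  = \overline{\alpha\ifnal\toper+\beta\ifnal[F]\toper}(f),
\end{align*}
and the companion identity for lower expectations is then immediate from the duality $\lifnal(\gamble[h])=-\uifnal(-\gamble[h])$.

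The only genuinely delicate step is the lemma collapsing $\overline{\ifnal\toper}(f)$ to $\uifnal(\utoper f)$: without the separately specified rows property of $\toper$, the pointwise maximum $\utoper f$ need not be realised by a single $\ptoper\in\toper$, and the inner supremum over $\ptoper$ would not commute with the action of $\fnal$. Once that collapse is in place, everything else is just the linearity of the supremum over an independent product, so no further obstacle is anticipated.
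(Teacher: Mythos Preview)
Your argument is correct and follows essentially the same route as the paper: both proofs reduce to checking, for an arbitrary gamble $f$, that the upper (and lower) expectations of the two sides agree, and both use that the upper expectation of a convex combination of IEFs is the corresponding convex combination of the upper expectations. The only difference is packaging: the paper works directly with the interval calculus, writing $(\alpha\ifnal+\beta\ifnal[F])(\toper f)=(\alpha\ifnal+\beta\ifnal[F])([\ltoper f,\utoper f])$ and treating the identity $\overline{\ifnal\toper}(f)=\uifnal(\utoper f)$ as definitional, whereas you derive that identity as a lemma from separately specified rows. Your extra justification is sound but not needed within the paper's conventions.
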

\begin{proof}
 Take any gamble $\gamble$, and calculate
 \begin{align*}
  (\alpha \ifnal + \beta \ifnal[F])(\toper \gamble) & = (\alpha \ifnal + \beta \ifnal[F])([\ltoper \gamble, \utoper \gamble]) \\
  & = [(\alpha \lifnal + \beta \lifnal[F])(\ltoper \gamble), (\alpha \uifnal + \beta \uifnal[F])(\utoper \gamble)] \\
  & = [\alpha \lifnal(\ltoper \gamble) + \beta \lifnal[F](\ltoper \gamble), \alpha \uifnal(\utoper \gamble) + \beta \uifnal[F](\utoper \gamble)] \\
  & = \alpha [\lifnal(\ltoper \gamble), \uifnal(\utoper \gamble)] + \beta [\lifnal[F](\ltoper \gamble), \uifnal[F](\utoper \gamble)] \\
  & = (\alpha \ifnal\toper + \beta \ifnal[F]\toper)(\gamble).
 \end{align*}
\qed
\end{proof}
Much of the attention in this paper is put to invariant imprecise expectation functionals, which generalise the concept of invariant distributions. 
Let $\toper$ be an ITO. An IEF $\ifnal$ is called \emph{$\toper$-invariant} if $\ifnal \toper = \ifnal$. Clearly, if $\ifnal$ is $\toper$-invariant, then it is also $\toper^n$-invariant, for every $n\in \NN$.
 
It has been shown previously in \citep{skulj:09} that there always exists the least committal $\toper$-invariant IEF for every ITO $\toper$, and under certain assumptions also the most committal one. We have the following proposition. 
\begin{proposition}\label{limit-invar}
 Let an ITO $\toper$ be given and an IEF $\ifnal_0$ be such that either 
 \begin{align*}
 	 \ifnal_0 \toper &\subseteq \ifnal_0
 	 \intertext{or}
 	 \ifnal_0 \toper &\supseteq \ifnal_0.
 \end{align*}
Then the sequence $\{ \ifnal_n \}_{n\in \NN}$, where $\ifnal_{n+1} = \ifnal_n \toper$, is monotone and the limit $\ifnal_\infty = \lim_{n\to\infty}E_n$ exists and is a $\toper$-invariant IEF. 
\end{proposition}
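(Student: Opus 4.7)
The plan is to exploit the monotonicity of the operation $\ifnal \mapsto \ifnal\toper$ with respect to set inclusion, together with finite-dimensional compactness and continuity of $\toper$ against a fixed gamble, in order to establish all three claims (monotonicity of the sequence, existence of the limit, and invariance) in one stroke.

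First I would verify that $\ifnal \mapsto \ifnal\toper$ is monotone with respect to inclusion: if $\ifnal \subseteq \ifnal[F]$, then by the identity $\ifnal\toper(\gamble) = [\lifnal(\ltoper\gamble), \uifnal(\utoper\gamble)]$ and the characterization $\ifnal\subseteq\ifnal[F] \iff \uifnal\le\uifnal[F]$ and $\lifnal\ge\lifnal[F]$ pointwise, one gets $\ifnal\toper \subseteq \ifnal[F]\toper$. Applied to the hypothesis $\ifnal_1 = \ifnal_0\toper \subseteq \ifnal_0$ and iterated, this immediately gives $\ifnal_{n+1}\subseteq\ifnal_n$ for all $n$ by induction, so the sequence is monotone decreasing; the case $\ifnal_0\toper \supseteq \ifnal_0$ is symmetric.

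Second, for existence of the limit: since $\states$ is finite, expectation functionals sit inside a compact convex subset of $\RR^{|\states|}$, so each $\ifnal_n$ is compact and convex. I would define $\ifnal_\infty = \bigcap_n \ifnal_n$ in the decreasing case (or the closure of $\bigcup_n \ifnal_n$ in the increasing case); this is nonempty, compact and convex, hence an IEF. Equivalently, at the level of functionals, the upper expectations $\uifnal_n(\gamble)$ form a monotone sequence in $[\min\gamble,\max\gamble]$, so they converge pointwise to some $\uifnal_\infty(\gamble)$; properties (i)--(iii) from the excerpt are preserved under pointwise monotone limits, so $\uifnal_\infty$ is the upper expectation of an IEF, and that IEF is precisely $\ifnal_\infty$.

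Third, for invariance: using $\uifnal_{n+1}(\gamble) = \uifnal_n(\utoper\gamble)$ (which comes from $\ifnal_n\toper(\gamble) = \ifnal_n([\ltoper\gamble, \utoper\gamble])$) and passing $n\to\infty$, the left side tends to $\uifnal_\infty(\gamble)$, while the right side tends to $\uifnal_\infty(\utoper\gamble)$ because $\utoper\gamble$ is a single fixed gamble and pointwise convergence suffices. Hence $\uifnal_\infty(\gamble) = \uifnal_\infty(\utoper\gamble) = \overline{\ifnal_\infty\toper}(\gamble)$. The symmetric computation on lower expectations yields $\ifnal_\infty\toper = \ifnal_\infty$.

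The one delicate point, which I expect to be the main technical obstacle, is justifying that the pointwise limit of the upper expectations actually corresponds to a well-defined IEF in the stated sense, i.e.\ that one can identify $\ifnal_\infty = \bigcap_n \ifnal_n$ with the IEF whose upper expectation is $\lim_n \uifnal_n$. In finite dimensions this is routine once one invokes the converse part of the characterization stated after axioms (i)--(iii), but it has to be spelled out carefully; after that, both monotonicity and invariance follow cleanly from the continuity argument above.
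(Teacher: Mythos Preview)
The paper does not actually supply a proof of this proposition: it is stated immediately after the remark ``It has been shown previously in \citep{skulj:09}\ldots'' and is followed directly by a corollary-style sentence, so the result is being quoted from earlier work rather than re-proved here. There is therefore no in-paper argument to compare against.

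That said, your proposal is a correct self-contained proof. The monotonicity of $\ifnal\mapsto\ifnal\toper$ with respect to inclusion is exactly what drives the induction $\ifnal_{n+1}\subseteq\ifnal_n$ (resp.\ $\supseteq$); the pointwise monotone limit $\uifnal_\infty(\gamble)=\lim_n\uifnal_n(\gamble)$ exists by boundedness and inherits axioms (i)--(iii), hence defines an IEF by the converse stated in the paper; and invariance follows from $\uifnal_{n+1}(\gamble)=\uifnal_n(\utoper\gamble)$ with $\utoper\gamble$ a fixed gamble. The ``delicate point'' you flag---matching $\bigcap_n\ifnal_n$ (or the closure of $\bigcup_n\ifnal_n$) with the IEF whose upper envelope is $\lim_n\uifnal_n$---is genuine but routine in finite dimensions, and your sketch handles it adequately. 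Nothing is missing.
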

It is an immediate consequence of this proposition that taking $\ifnal_0$ to be the vacuous IEF on the set of states $\states$, the limit IEF is the unique least committal $\toper$-invariant IEF, but in general there is no unique most committal $\toper$-invariant IEF. 
\begin{definition}
 Let $\ifnal$ be an IEF. Then we define the \emph{support} of $\ifnal$ with 
 \[ \supp(\ifnal) = \{ \state\colon \uifnal(\chf{\{\state\}})>0 \}. \]
\end{definition}
The following proposition is straightforward.
\begin{proposition}\label{gamble-support}
For every IEF $\ifnal$ and every gamble $\gamble \in \allgambles(\states)$ we have that
 \[ \ifnal (\gamble) = \ifnal (\gamble\cdot \chf{C}), \]
 whenever ${\supp(\ifnal)}\subseteq C$.
\end{proposition}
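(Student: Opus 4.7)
The plan is to unpack the imprecise expectation $\ifnal(\gamble)$ into the family of precise expectations $\fnal(\gamble)$ for $\fnal \in \ifnal$ and establish the identity on each one separately. I would first observe that, by definition of the support, any $\statey \notin \supp(\ifnal)$ satisfies $\uifnal(\chf{\{\statey\}}) = 0$. Since $0 \le \fnal(\chf{\{\statey\}}) \le \uifnal(\chf{\{\statey\}})$ for every $\fnal \in \ifnal$, every such precise functional assigns zero mass to every $\statey \notin \supp(\ifnal)$, hence to every $\statey \notin C$. Because $\states$ is finite, additivity of the precise $\fnal$ yields $\fnal(\chf{\states \setminus C}) = 0$ for all $\fnal \in \ifnal$.

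Given this, for an arbitrary $\fnal \in \ifnal$ and $\gamble \in \allgambles(\states)$, I would write $\gamble = \gamble \cdot \chf{C} + \gamble \cdot \chf{\states \setminus C}$ and bound $|\gamble(\statey)\,\chf{\states \setminus C}(\statey)| \le M\,\chf{\states \setminus C}(\statey)$, where $M = \max_{\statex \in \states} |\gamble(\statex)|$. Linearity and monotonicity of $\fnal$, combined with $\fnal(\chf{\states \setminus C}) = 0$, then give $\fnal(\gamble) = \fnal(\gamble \cdot \chf{C})$ directly. Taking maximum and minimum over $\fnal \in \ifnal$ yields $\uifnal(\gamble) = \uifnal(\gamble \cdot \chf{C})$ and $\lifnal(\gamble) = \lifnal(\gamble \cdot \chf{C})$, which as interval-valued identities amount to $\ifnal(\gamble) = \ifnal(\gamble \cdot \chf{C})$.

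I do not anticipate a real obstacle; the statement is essentially the observation that no precise functional in $\ifnal$ charges the complement of the support. If one preferred to stay entirely at the level of upper expectations, an alternative route uses only the properties (i)--(v) listed earlier. First, subadditivity and finiteness give $\uifnal(\chf{\states \setminus C}) \le \sum_{\statey \notin C} \uifnal(\chf{\{\statey\}}) = 0$. Combined with $\pm \gamble \cdot \chf{\states \setminus C} \le M\,\chf{\states \setminus C}$, monotonicity and non-negative homogeneity force $\uifnal(\pm \gamble \cdot \chf{\states \setminus C}) \le 0$. Two applications of subadditivity, once to $\gamble = \gamble \cdot \chf{C} + \gamble \cdot \chf{\states \setminus C}$ and once to $\gamble \cdot \chf{C} = \gamble - \gamble \cdot \chf{\states \setminus C}$, then sandwich $\uifnal(\gamble)$ and $\uifnal(\gamble \cdot \chf{C})$ against each other, and the corresponding equality for $\lifnal$ follows from the duality $\lifnal(\gamble) = -\uifnal(-\gamble)$.
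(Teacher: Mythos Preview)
Your proposal is correct. The paper does not actually supply a proof of this proposition; it merely states that it is ``straightforward'' and moves on. Both of your routes---reducing to the precise functionals $\fnal\in\ifnal$, or working directly with the upper-expectation axioms (i)--(v)---are valid ways to fill in this gap, and either would be accepted as the intended elementary argument.
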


\section{Accessibility relations and convergence}\label{s-arc}
In the theory of Markov chains it is an important question which states are possible to be visited at following times when a state has been observed at given time. In the case of precise Markov chains there is a clear meaning of the expression 'possible', which states that the probability is strictly positive. While in the case of imprecise Markov chains the situation is more complex. The probabilities are now in the form of intervals, and these intervals can have positive upper bound but zero lower bound. This suggests that there may be different notions of accessibility relations, reflecting different notions of 'possibility'. 

We first describe the notion of accessibility introduced by \citet{decooman-2008-a}, based on upper probability. This accessibility notion is based on proclaiming an event possible if it has strictly positive upper probability. Note that the probability (interval) of the chain being in a state $\statey$ at time $n$ given that it is in the state $\state$ at time 0 is obtained using formula \eqref{mc-calc} as $\toper^n\chf{\{\statey\}}(\statex)$.  

\subsection{Weak accessibility relation and unique convergence}\label{ss-war}
The accessibility relation based on the upper probabilities is defined as follows. A state $\statey$ is \emph{accessible} from $\statex$ in $n$ steps if $\utoper^n\chf{\{\statey\}}(\statex)>0$. We will then write $\statex\acs[n] \statey$. When $\statex \acs[n] \statey$ for some $n\ge 0$, we write $\statex \acs \statey$. If both $\statex \acs \statey$ and $\statey \acs \statex$ then we say that the states \emph{communicate} and write $\statex \comm\statey$. The relation $\acs$ is a preorder, i.e. is reflexive and transitive, and $\comm$ is an equivalence relation that partitions the set of states $\states$ into \emph{communication classes}. Every two elements of a communication class are then accessible from one another. 
\begin{definition}
 A set $\sset[C]\subseteq \states$ of states is \emph{absorbing} if $\statex \in \sset[C]$ and $\statex\acs \statey$ implies $\statey\in\sset[C]$. 
\end{definition}
\begin{proposition}\label{abs-monotone}
 Let $\sset[C]$ be absorbing. Then 
 \begin{equation}\label{eq-vac-mon}
	\ifnal[V]_{C}\toper\subseteq \ifnal[V]_{C}
 \end{equation} 
 where $\ifnal[V]_{\sset[C]}$ is the vacuous IEF on $\sset[C]$. 
\end{proposition}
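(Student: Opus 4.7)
My plan is to verify \eqref{eq-vac-mon} by checking the inclusion at the level of upper expectations, which suffices by the criterion recalled in the excerpt that $\ifnal\subseteq \ifnal[F]$ holds iff $\uifnal(\gamble)\le \uifnal[F](\gamble)$ for every gamble. Unwinding the definition of the vacuous IEF on $\sset[C]$ together with $\toper\gamble = [\ltoper\gamble,\utoper\gamble]$, the inclusion reduces to the pointwise inequality
\[
 \max_{\statex\in \sset[C]} \utoper\gamble(\statex) \le \max_{\statex\in \sset[C]}\gamble(\statex)
\]
for every $\gamble\in\allgambles(\states)$. My task is therefore to bound $\utoper\gamble(\statex)$ at each $\statex\in\sset[C]$ by the maximum of $\gamble$ over $\sset[C]$.

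The pivotal step I would use is a one-step consequence of absorbingness: if $\statex\in \sset[C]$ and $\statey\notin \sset[C]$, then $\utoper\chf{\{\statey\}}(\statex)$ must vanish, since a strictly positive value would give $\statex\acs[1]\statey$ and hence $\statex\acs\statey$, forcing $\statey\in \sset[C]$ by the defining property of an absorbing set. Because $0\le \ptoper\chf{\{\statey\}}(\statex)\le \utoper\chf{\{\statey\}}(\statex)$ for every precise $\ptoper\in\toper$, this upgrades to $\ptoper(\statex,\statey)=0$ for all $\ptoper\in\toper$ whenever $\statex\in \sset[C]$ and $\statey\notin \sset[C]$; in other words, no precise kernel in $\toper$ can leak mass out of $\sset[C]$ in a single step.

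With that in place, I would close the argument by a plain convex-combination estimate. For each $\statex\in \sset[C]$ and $\ptoper\in\toper$, the row $\ptoper(\statex,\cdot)$ is a probability distribution supported in $\sset[C]$, so
\[
 \ptoper\gamble(\statex) = \sum_{\statey\in \sset[C]}\ptoper(\statex,\statey)\gamble(\statey) \le \max_{\statey\in \sset[C]}\gamble(\statey).
\]
Taking the supremum over $\ptoper\in\toper$ on the left, which equals $\utoper\gamble(\statex)$ thanks to the separately specified rows of $\toper$, and then the maximum over $\statex\in \sset[C]$, yields the required bound.

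I expect the main obstacle to be more bookkeeping than substance: it is the move from the abstract multi-step preorder $\acs$, in whose language absorbingness is phrased, to the one-step vanishing of upper transitions leaving $\sset[C]$. Once that translation is made, the rest is the obvious observation that a convex combination of values of $\gamble$ taken inside $\sset[C]$ cannot exceed $\max_{\statey\in\sset[C]}\gamble(\statey)$.
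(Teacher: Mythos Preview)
Your proposal is correct and follows essentially the same line as the paper's proof: both reduce the inclusion to showing $\utoper\gamble(\statex)\le \max_{\statey\in\sset[C]}\gamble(\statey)$ for every $\statex\in\sset[C]$, and both obtain this from the fact that absorbingness forces $\utoper\chf{\{\statey\}}(\statex)=0$ whenever $\statex\in\sset[C]$ and $\statey\notin\sset[C]$. The only cosmetic difference is that the paper phrases this as $\supp(\toper(\cdot|\statex))\subseteq\sset[C]$ and invokes Proposition~\ref{gamble-support} to replace $\gamble$ by $\gamble\chf{\sset[C]}$ (then bounding by the max directly via monotonicity of the upper expectation), whereas you descend to precise operators $\ptoper\in\toper$ and use a convex-combination bound; neither detour through separately specified rows nor through precise kernels is strictly needed, but both are valid.
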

\begin{proof}
 Clearly we have that $\supp(\toper(\cdot|\state))\subseteq \sset[C]$ for every $\state\in \sset[C]$, and therefore, by Proposition~\ref{gamble-support}, $\utoper \gamble (\state) = \utoper(\gamble|\state) = \utoper(\gamble 1_C|\state) \le \uifnal[V]_{\sset[C]}(\gamble)$. Hence, $\uifnal[V]_{\sset[C]}(\utoper \gamble) \le \uifnal[V]_{\sset[C]}(\gamble)$, which implies \eqref{eq-vac-mon}. 
\qed
\end{proof}
The following corollary follows immediately from Propositions~\ref{limit-invar} and \ref{abs-monotone}. 
\begin{corollary}\label{absorbing-invariant-existence}
 Let $\sset[C]$ be absorbing. Then there exists the unique least committal $\toper$-invariant IEF $\ifnal$ such that $\supp(\ifnal)\subseteq \sset[C]$. 
\end{corollary}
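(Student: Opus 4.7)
The plan is to produce the claimed invariant IEF as the monotone limit of iterating $\toper$ starting from the vacuous IEF $\ifnal[V]_{\sset[C]}$, and then verify that this limit is maximal (i.e.\ least committal) among all $\toper$-invariant IEFs whose support sits in $\sset[C]$.

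First I would use Proposition~\ref{abs-monotone}, which gives $\ifnal[V]_{\sset[C]}\toper\subseteq \ifnal[V]_{\sset[C]}$, to feed Proposition~\ref{limit-invar} with $\ifnal_0 = \ifnal[V]_{\sset[C]}$. That proposition then yields a monotonically decreasing sequence $\ifnal_n = \ifnal[V]_{\sset[C]}\toper^n$ whose limit $\ifnal_\infty = \bigcap_n \ifnal_n$ is $\toper$-invariant. Since $\ifnal_\infty\subseteq \ifnal[V]_{\sset[C]}$, every expectation functional in $\ifnal_\infty$ is supported on $\sset[C]$, so $\supp(\ifnal_\infty)\subseteq \sset[C]$.

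For the least-committal claim, let $\ifnal'$ be any $\toper$-invariant IEF with $\supp(\ifnal')\subseteq \sset[C]$. I would first show $\ifnal'\subseteq \ifnal[V]_{\sset[C]}$: for every $\fnal\in \ifnal'$ and every $\state\notin \sset[C]$ we have $\fnal(\chf{\{\state\}})\le \uifnal'(\chf{\{\state\}}) = 0$, so each such $\fnal$ is a distribution supported on $\sset[C]$, hence lies in $\ifnal[V]_{\sset[C]}$. Next, the action $\ifnal\mapsto \ifnal\toper$ is monotone under inclusion (from the formula $\ifnal\toper(\gamble) = [\lifnal(\ltoper\gamble), \uifnal(\utoper\gamble)]$, since a larger credal set has a smaller lower and larger upper expectation). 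Iterating and using $\toper$-invariance of $\ifnal'$ gives $\ifnal' = \ifnal'\toper^n\subseteq \ifnal[V]_{\sset[C]}\toper^n = \ifnal_n$ for every $n$, hence $\ifnal'\subseteq \ifnal_\infty$. This shows $\ifnal_\infty$ contains every candidate and is therefore the unique least committal $\toper$-invariant IEF supported in $\sset[C]$.

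The genuine work in the corollary is already packaged in Propositions~\ref{limit-invar} and \ref{abs-monotone}; the only obstacle worth flagging is the small bookkeeping step that $\supp(\ifnal')\subseteq \sset[C]$ promotes to the credal-set inclusion $\ifnal'\subseteq \ifnal[V]_{\sset[C]}$, which is what allows the iteration argument to contain $\ifnal'$ inside every $\ifnal_n$ and therefore inside the limit.
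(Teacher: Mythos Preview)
Your proof is correct and follows exactly the approach the paper intends: the paper's own proof is the one-line remark that the corollary ``follows immediately from Propositions~\ref{limit-invar} and \ref{abs-monotone}'', and you have simply unpacked that, including the verification that the limit is indeed least committal among all $\toper$-invariant IEFs supported in $\sset[C]$ (a step the paper leaves implicit).
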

\begin{proposition}\label{invariant-absorbing}
 Let $\ifnal$ be a $\toper$-invariant IEF. Then $\supp(\ifnal)$ is absorbing. 
\end{proposition}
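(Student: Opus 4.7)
Write $C=\supp(\ifnal)$. I need to show that whenever $\statex \in C$ and $\statex \acs \statey$, also $\statey \in C$. Unpacking the definitions, the hypothesis says $\uifnal(\chf{\{\statex\}})>0$ and there exists some $n\ge 0$ with $\utoper^n\chf{\{\statey\}}(\statex)>0$, and I must deduce $\uifnal(\chf{\{\statey\}})>0$. The link between these two statements is of course the invariance $\ifnal\toper=\ifnal$, which, since invariance is preserved under iteration, extends to $\ifnal\toper^n=\ifnal$ for every $n\in\NN$; taking upper envelopes this yields in particular $\uifnal(\utoper^n \gamble)=\uifnal(\gamble)$ for every gamble $\gamble$.

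The plan is then to apply this identity to $\gamble=\chf{\{\statey\}}$. Set $\gamble[h] := \utoper^n\chf{\{\statey\}}$; this is a non-negative gamble with $\gamble[h](\statex)=:a>0$ by hypothesis. Hence $\gamble[h] \ge a\,\chf{\{\statex\}}$ pointwise (the right-hand side vanishes off $\statex$ and is bounded above by $\gamble[h](\statex)=a$ at $\statex$). Applying monotonicity and non-negative homogeneity of the upper expectation functional, together with $\statex\in\supp(\ifnal)$, gives
\[
\uifnal(\chf{\{\statey\}}) \;=\; \uifnal(\utoper^n\chf{\{\statey\}}) \;=\; \uifnal(\gamble[h]) \;\ge\; a\,\uifnal(\chf{\{\statex\}}) \;>\; 0,
\]
so $\statey\in C$ and $C$ is absorbing.

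I do not anticipate a real obstacle. The only step that requires care is the translation from the set-equality $\ifnal\toper^n=\ifnal$ of IEFs to the scalar identity on upper envelopes, but this is immediate from the convention established in the paper that an IEF is determined by its upper (equivalently lower) expectation functional, and from the formula $(\ifnal\toper)(\gamble)=[\lifnal(\ltoper\gamble),\uifnal(\utoper\gamble)]$ used in the proof of Proposition~\ref{convex-ief}. One could alternatively phrase the final inequality via Lemma~\ref{x-krat-char}(i), applied to the non-negative gamble $\gamble[h]$ with the constant $a=\gamble[h](\statex)$, to get $\uifnal(\gamble[h])\ge a\,\uifnal(\chf{\{\gamble[h]\ge a\}})\ge a\,\uifnal(\chf{\{\statex\}})$, but the direct bound above is already sufficient.
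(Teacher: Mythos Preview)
Your proof is correct and follows essentially the same line as the paper's: both use $\toper$-invariance to write $\uifnal(\chf{\{\statey\}})=\uifnal(\utoper^n\chf{\{\statey\}})$ and then bound this below by $\utoper^n\chf{\{\statey\}}(\statex)\cdot\uifnal(\chf{\{\statex\}})>0$. The paper's version is simply more terse, and your remark that the final inequality is an instance of Lemma~\ref{x-krat-char}(i) is exactly how the paper would justify it.
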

\begin{proof}
 Let $\state\in\supp(\ifnal)$ and $\state\acs\statey$. By definition then there is some $r>0$ such that $\utoper^r \chf{\{ \statey\}}(\state)>0$. $\toper$-invariance of $\ifnal$ implies 
 \[ \uifnal(\chf{\{ \statey\}}) = \uifnal(\utoper^r\chf{\{ \statey\}}) \ge \uifnal(\chf{\{ \state\}})\utoper^r\chf{\{ \statey\}}(x) > 0. \] 
 Hence $\statey\in \supp(\ifnal)$ as well. 
\qed
\end{proof}
Take two communication classes $\cclass[C]$ and $\cclass[D]$. If for some states $\statex\in\cclass[C]$ and $\statey\in\cclass[D], \statex$ leads to $\statey$, and hence every element in $\cclass[C]$ leads to every element in $\cclass[D]$, then we say that the class $\cclass[C]$ leads to $\cclass[D]$. If a class $\cclass[D]$ only leads to itself, then we call it a \emph{maximal class}. When there is a unique maximal class it is called the \emph{top class}. A communication class is called \emph{regular} if for every pair of its elements $\statex$ and $\statey$ there is some $r$ such that $\statex \acs[n]\statey$ for every $n\ge r$. If the top class is regular, then the accessibility relation is said to be \emph{top class regular}. An imprecise Markov chain is called \emph{regularly absorbing} if it is top class regular and if there is some $n\in \NN$ such that 
\begin{equation}\label{reg-abs}
\ltoper^n 1_{R}(\statey)>0 \text{ for every } \statey\in \states\backslash R, 
\end{equation}
where $R$ is the top class. In other words, the lower probability that the chain will eventually move from any class outside the top class to the top class is positive. This practically means that no matter what is the initial distribution of the probability mass, it gets transferred to the top class, making it an absorbing class. So in the long run all probability mass is moved into the top class. 

The following convergence theorem is one of the most general results on unique convergence for imprecise Markov chains. 
\begin{theorem}[\citep*{decooman-2008-a}, Theorem~5.1]\label{conv-decooman}
 Consider a stationary imprecise Markov chain with finite set $\states$ that is regularly absorbing. Then for every initial upper expectation $\uifnal_0$ the upper expectation $\uifnal_n = \uifnal_0 \utoper^n$ for the state at time $n$ converges point-wise to the same upper expectation $\uifnal_\infty$:
 \[ \lim_{n\to\infty} \uifnal_n (\gamble) = \lim_{n\to\infty} \uifnal_0 (\toper^n \gamble) =: \uifnal_\infty(\gamble) \text{ for every }\gamble \in \allgambles(\states). \]
 Moreover, the limit expectation $\uifnal_\infty$ is the only $\utoper$-invariant upper expectation on $\allgambles(\states)$. 
\end{theorem}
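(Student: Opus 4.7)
The plan is to show that for every gamble $f$, the sequence $\utoper^n f(x)$ converges uniformly in $x \in \states$ to a constant $c(f)$; the theorem then follows easily. Set $M_n := \max_x \utoper^n f(x)$ and $m_n := \min_x \utoper^n f(x)$. Property (i) gives $m_n \le \utoper(\utoper^n f)(x) \le M_n$ for every $x$, so $\{M_n\}$ is non-increasing, $\{m_n\}$ is non-decreasing, and being bounded both converge to limits $m(f) \le M(f)$. If $M_n - m_n \to 0$, setting $c(f) := M(f) = m(f)$ gives $\utoper^n f \to c(f)\chf{\states}$ uniformly, whence properties (i) and (iv) yield $|\uifnal_0(\utoper^n f) - c(f)| \le \|\utoper^n f - c(f)\chf{\states}\|_\infty \to 0$ for any initial $\uifnal_0$. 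Defining $\uifnal_\infty(f) := c(f)$, pointwise preservation of (i)--(iii) makes $\uifnal_\infty$ an upper expectation, $\utoper$-invariance holds because uniform convergence forces $c(\utoper f) = \lim_n M_{n+1}(f) = c(f)$, and uniqueness follows since any $\utoper$-invariant $\uifnal^*$ satisfies $\uifnal^*(f) = \uifnal^*(\utoper^n f) \to c(f)$.

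The main task is therefore the span contraction $M_n - m_n \to 0$, which I would obtain from a Doeblin-type estimate: there exist $N \in \NN$ and $\rho \in (0,1)$ with $M_{n+N} - m_{n+N} \le (1-\rho)(M_n - m_n)$ for every $n$. With $h := \utoper^n f - m_n\chf{\states} \ge 0$, constant additivity (iv) gives $\utoper^N(\utoper^n f) = m_n\chf{\states} + \utoper^N h$, so it suffices to prove
\[ \max_x \utoper^N h(x) - \min_x \utoper^N h(x) \le (1-\rho) \max_x h(x) \]
for every non-negative gamble $h$, and since $\utoper^N h \le \max h$ by (i), it is enough to produce a uniform lower bound $\min_x \utoper^N h(x) \ge \rho \max_x h(x)$.

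The key ingredient is a uniform reachability estimate: there exist $N$ and $\rho_0 > 0$ with $\utoper^N \chf{\{x^*\}}(y) \ge \rho_0$ for every $y \in \states$ and every $x^* \in R$. The regularly absorbing condition gives some $n_0$ with $\ltoper^{n_0}\chf{R}(y) > 0$ for $y \notin R$; since $R$ is maximal, $\utoper\chf{\states\setminus R}(y) = 0$ for $y \in R$ (by subadditivity (ii) applied to the decomposition of $\chf{\states\setminus R}$ into single-state indicators), so $\ltoper^{n_0}\chf{R}(y) = 1$ there. Finiteness of $\states$ yields $\beta > 0$ with $\ltoper^{n_0}\chf{R} \ge \beta\chf{\states}$. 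Top class regularity with finiteness of $R \times R$ gives a common $n_1$ and $\gamma > 0$ with $\utoper^{n_1}\chf{\{x^*\}} \ge \gamma\chf{R}$ pointwise for every $x^* \in R$. Applying $\utoper^{n_0}$ and using monotonicity (v) and non-negative homogeneity (iii),
\[ \utoper^{n_0+n_1}\chf{\{x^*\}}(y) \ge \gamma \utoper^{n_0}\chf{R}(y) \ge \gamma \ltoper^{n_0}\chf{R}(y) \ge \gamma\beta, \]
so $N := n_0 + n_1$ and $\rho_0 := \gamma\beta$ work.

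The main obstacle I anticipate is converting this uniform reachability estimate into the full span contraction when $\max h$ is attained outside $R$, since the clean pointwise bound $h \ge (\max h)\chf{\{x^*\}}$ only feeds into the reachability estimate for $x^* \in R$. I would address this by splitting $h = h\chf{R} + h\chf{\states\setminus R}$ and handling each piece via subadditivity (ii), monotonicity (v), and non-negative homogeneity (iii): the $R$-part triggers the uniform reachability from an argmax inside $R$, while the part supported on $\states\setminus R$ has its contribution controlled after $\utoper^{n_0}$ by $(1-\beta)\max h$ through the regularly absorbing condition and the identity $\utoper^{n_0}\chf{\states\setminus R} = \chf{\states} - \ltoper^{n_0}\chf{R}$. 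Making every intermediate step compatible with the merely sub-additive rather than linear structure of $\utoper$ is the technical delicacy, but all bounds used are within the scope of properties (i)--(v).
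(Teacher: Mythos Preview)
First, note that the paper does not prove this theorem: it is quoted from \citet{decooman-2008-a} and used as a black box, so there is no in-paper argument to compare against. I therefore assess your proposal on its own.

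The overall outline --- span contraction of $\utoper^n f$, hence uniform convergence to a constant, hence the stated conclusions --- is the right shape, and your first paragraph (existence of the limit, $\utoper$-invariance, uniqueness) is correct once span contraction is granted. The gap is in the contraction step itself. You reduce to proving $\min_x \utoper^N h(x) \ge \rho \max_x h(x)$ for every non-negative $h$, but this inequality is false under the hypotheses. Take $\states=\{1,2\}$ with top class $R=\{1\}$, the transition from $1$ deterministic to $1$, and from $2$ to $1$ with probability ranging over $[\tfrac12,1]$; this chain is regularly absorbing. For $h=\chf{\{2\}}$ one has $\utoper^N h(1)=0$ for every $N$ because $R$ is absorbing, so no $\rho>0$ works. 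The span nevertheless contracts (here $\utoper h(2)=\tfrac12$), but because the \emph{maximum} drops, not because the minimum rises.

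Your proposed remedy --- splitting $h=h\chf{R}+h\chf{\states\setminus R}$ and invoking subadditivity --- cannot close this gap. Subadditivity of $\utoper^N$ yields an \emph{upper} bound $\utoper^N h \le \utoper^N(h\chf{R})+\utoper^N(h\chf{\states\setminus R})$, whereas you need a lower bound on $\utoper^N h$ at its minimiser; and even if one uses the split for the maximum, the resulting bound $\max_R h + (1-\beta)\max h$ is too lossy to yield contraction unless $\rho_0+\beta>1$, which is not guaranteed. The estimate that actually helps goes through the conjugate lower operator: from $h\le \max h - (\max h - \max_R h)\chf{R}$, monotonicity and the identity $\utoper^{n_0}(c-g)=c-\ltoper^{n_0}g$ give
\[
\utoper^{n_0} h \;\le\; \max h - (\max h - \max_R h)\,\ltoper^{n_0}\chf{R} \;\le\; (1-\beta)\max h + \beta\max_R h,
\]
which pulls the global maximum toward $\max_R h$; combined with the easy lower bound $\utoper^{N} h \ge \rho_0\max_R h$ (from $h\ge (\max_R h)\chf{\{x^*\}}$ with $x^*\in R$ and your reachability estimate) this does give span contraction by $1-\min(\beta,\rho_0)$. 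In short, the obstacle you flagged is real, but its resolution requires bounding the maximum via $\ltoper$, not bounding the minimum via subadditivity of $\utoper$.
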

The main objective of this paper is to explore the situations where the assumptions of the above theorem are not satisfied. Let us first give a motivating example with a top class regular imprecise Markov chain that is not regularly absorbing, i.e. does not satisfy \eqref{reg-abs}. 
\begin{example}\label{ex-1}
 We consider an imprecise Markov chain with the set of states $\states$ partitioned in two subsets $\states_1$ and $\states_2$ and ITO $T$ that can be represented in the block matrix form:
 \[ \toper = \begin{bmatrix}
         P & \mathbf 0 \\
         R & Q
        \end{bmatrix}, \]
where the blocks correspond to $\states_1$ and $\states_2$ respectively, so that $P$ is a regular imprecise operator, $\utoper[R]>0$ and $\ltoper[R]=0$. Clearly, the corresponding chain is top class regular, where $\states_1$ is its top class; however, it is not regularly absorbing because of $\ltoper[R]=0$. 

Let us first show that indeed the chain is not uniquely convergent. In fact we will construct invariant IEFs in three different ways, two of them being clearly different, while the equality between the second and the third one is not obvious and will turn out as a consequence of Theorem~\ref{thm-main-1}. 

The first $\toper$-invariant IEF is the one where the probability mass is concentrated in $\states_1$, which is an absorbing set. By Theorem~\ref{conv-decooman} starting with any IEF of the block form $(\ifnal^1_0, 0)$ the IEFs $(\ifnal^1_n, 0) = (\ifnal^1_0, 0)\toper^n$, converge uniquely to a $\toper$-invariant $E_\infty = (\ifnal^1_\infty, 0)$. Where $\ifnal^1_\infty$ is the unique $P$-invariant IEF.

Now take $\ifnal[V]$ to be the vacuous IEF on $\states$. Since $\ltoper[R]=0$ implies $\utoper[Q]=1$, this implies that $\uifnal[V](\utoper^n1_{\states_2}) = 1$, as well as $\uifnal[V](\utoper^n1_{\states_1}) = 1$ because of $\utoper[P]=1$. But on the other hand the sequence $\{ \ifnal[V]\toper^n \}$ converges to some $\ifnal[F]_\infty$, which must then satisfy both $\uifnal[F]_\infty(1_{\states_1}) = 1$ and $\uifnal[F]_\infty(1_{\states_2}) = 1$. 

Yet another way to find a $\toper$-invariant IEF is the following. Let $\ifnal[F]' = (0, \ifnal[V]_2)$ where $\ifnal[V]_2$ is the vacuous IEF on $\states_2$. Consider an ITO of the form 
\[ \toper_1 = \begin{bmatrix}
         P & \mathbf 0 \\
         \mathbf 0 & Q^*
        \end{bmatrix} \subseteq \toper. \]
For simplicity we may assume that $Q^*$ is regular, which implies that $\states_2$ is a communication class. The sequence $\ifnal[F]'_n = \ifnal[F]'\toper_1^n$ clearly satisfies the conditions of the Proposition~\ref{limit-invar} and therefore there exists the limit $\ifnal[F]'_\infty = \lim_{n\to\infty}\ifnal[F]'_n = (0, \ifnal[F]^2_\infty)$. Assuming regularity of $Q^*$ it is then the unique $\toper_1$-invariant IEF of this form. But then $\ifnal[F]'_\infty \toper \supseteq \ifnal[F]'_\infty$, which by Proposition~\ref{limit-invar} again implies that the limit $\ifnal[\tilde F]_\infty
= \lim_{n\to\infty} \ifnal[F]'_\infty \toper^n$ exists and is $\toper$-invariant. Moreover, $\utoper[R]>0$ implies that $\uifnal[{\tilde F}]_\infty(1_{\states_1})=\uifnal[\tilde F]_\infty(1_{\states_2})=1$. Thus the values of $\uifnal[\tilde F]_\infty$ and $\uifnal[F]_\infty$ coincide on the characteristic functions of both communication classes in $\states$. It is less obvious though that this implies that they must be equal, as follows from our first main result in Theorem~\ref{thm-main-1}.
        
Although $\ifnal_\infty$ and $\ifnal[F]_\infty$ are two extremal invariant IEFs, many more can be constructed simply by taking their convex combinations, as easily follows from Proposition~\ref{convex-ief}. It is still an open question, however, whether every non-extremal invariant IEF can be constructed as a convex combination of the extremal ones. 
\end{example}

\subsection{Strong accessibility relation}\label{ss-sar}
The accessibility relation $\acs$ denotes the possibility that one element is accessible from another. Although, it might be that the lower probability of an $\state$ leading to a $\statey$ is zero and the upper one is positive, which clearly can have essentially different implications than the case where both lower and upper probability are positive. Example~\ref{ex-1} describes such a case. The weak accessibility relation $\acs$ is therefore not enough to describe all the relevant properties of the behaviour of an imprecise Markov chain. 

For this reason we will define another accessibility relation between sets of states as follows.
\begin{definition}
Let $\sset$ and $\sset[B]$ be arbitrary sets of states. We say that $\sset$ \emph{strongly leads to $\sset[B]$ in $n$ steps} if $\utoper^n\chf{B}(\state)=1$ for every $\state\in \sset$. We will then write $\sset[A]\sacs[n]\sset[B]$. If there is some $n > 0$ such that $\sset[A]\sacs[n] \sset[B]$ then we will say that $\sset[A]$ \emph{strongly leads} to $\sset[B]$ and write $\sset[A]\sacs\sset[B]$.  
\end{definition}
The strong accessibility relation denotes the case where the chain being in any state belonging to $A$ will move to a set of states $B$ with upper probability 1. It should be noted that this does not mean that the chain will move from $A$ to $B$ with certainty, but only that one of possible scenarios, described with the transition probabilities, is also such a certain move. And here is the crucial difference between the precise and imprecise case. While in the precise case a set cannot strongly lead into two disjoint sets, this is perfectly possible in the imprecise case. This fact implies that the interdependencies between classes of states in the imprecise case can be more complex than in the precise case. 

Note also that the case where $A\sacs B$ is different from the case where $\{ \state\}\sacs B$ for every $\state\in A$, although implies it. Consider the following example. 
\begin{example}
 Let $\{ \statex \}\sacs[1] \{ \statey \}$ and $\{ \statey \}\sacs[1] \{ \statex \}$. Then $\{ \statex \}\sacs[2k] \{ \statex \}$ for every $k\in\NN$ and $\{ \statey \}\sacs[2k+1] \{ \statex \}$ for every $k\in\NN$. However, there is no $n\in \NN$ such that $\{ \statex, \statey\}\sacs[n] \{ \statex \}$, whence $\{ \statex, \statey\}\not\sacs \{ \statex \}$.
\end{example}
To help us analyse the properties of the strong accessibility relation $\sacs$ we will define the following two functions. Let $\ifnal$ be an IEF. Define 
\begin{equation*}
 \psets(\sset) = \begin{cases}
                     1 & \uifnal(\chf{\sset}) = 1; \\
                     0 & \text{otherwise}
                    \end{cases}       
\end{equation*}
for every $\sset \subseteq \states$. 

Further let $\toper$ be an ITO and define
\begin{equation*}
 \acsets(\sset[A], \sset[B]) = \begin{cases}
                                1 & \sset[A]\sacs[1]\sset[B]; \\
                                0 & \text{otherwise.}
                               \end{cases}
\end{equation*}
The following proposition follows immediately from the definitions.
\begin{proposition}\label{prop-psets-acsets}
Let $\ifnal$ be an IEF and $\sset[A], \sset[B], \sset[C], \sset[D], \sset[A]_i$ for $i=1, \ldots, n$, sets of states. 
 \begin{enumerate}[(i)]
 \item $\acsets(A, B) = 1$ if and only if $\psets[{\toper(\cdot|\state)}](\sset[B])=1$ for every $\state\in\sset$.
  \item If $\sset[A]\subseteq \sset[B]$ then $\psets(\sset[A])\le \psets({\sset[B]})$.
  \item If $\sset[C]\subseteq \sset[A]$ and $\sset[B]\subseteq \sset[D]$ then $\acsets[\toper](\sset[A], \sset[B]) \le \acsets[\toper](\sset[C], \sset[D])$.
  \item $\displaystyle\min_{i=1, \ldots, n}\acsets[\toper](\sset[A_i], \sset[B]) =  \acsets[\toper]\left(\bigcup_{i=1}^n\sset[A]_i, \sset[B]\right)$.
 \end{enumerate}
\end{proposition}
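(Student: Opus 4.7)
The plan is to treat each of the four claims by unpacking the definitions of $\psets$ and $\acsets$ together with the monotonicity and boundedness properties of upper expectations already listed in the preliminary section. Nothing beyond the existing machinery should be needed, so the exercise is one of bookkeeping rather than any substantive difficulty, matching the paper's remark that the proposition follows immediately from the definitions.

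For (i), I would simply observe that $\acsets(\sset,\sset[B])=1$ unfolds, by the definition of strong accessibility in one step, to $\utoper\chf{\sset[B]}(\state)=\utoper(\chf{\sset[B]}|\state)=1$ for every $\state\in\sset$, and that this is precisely the statement that the conditional IEF $\toper(\cdot|\state)$ assigns upper probability $1$ to $\sset[B]$, i.e.\ that $\psets[{\toper(\cdot|\state)}](\sset[B])=1$. Since $\psi$ is two-valued, equality of the indicator statements on both sides gives the biconditional. For (ii), $\sset[A]\subseteq\sset[B]$ yields $\chf{\sset[A]}\le\chf{\sset[B]}$ pointwise, so monotonicity of $\uifnal$ gives $\uifnal(\chf{\sset[A]})\le\uifnal(\chf{\sset[B]})$; combined with the boundedness inequality $\uifnal(\chf{\sset[B]})\le 1$, whenever $\psets(\sset[A])=1$ we must have $\uifnal(\chf{\sset[B]})=1$ as well, which is $\psets(\sset[B])=1$.

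For (iii), the two inclusions act independently: shrinking $\sset[A]$ to $\sset[C]$ only reduces the collection of states over which the defining condition $\utoper\chf{\sset[B]}(\cdot)=1$ must hold, so the condition is preserved; and enlarging $\sset[B]$ to $\sset[D]$ gives $\chf{\sset[B]}\le\chf{\sset[D]}$, hence $\utoper\chf{\sset[B]}\le\utoper\chf{\sset[D]}$, so a value equal to $1$ for the former remains equal to $1$ for the latter. Together, $\acsets(\sset[A],\sset[B])=1$ implies $\acsets(\sset[C],\sset[D])=1$, which is the desired inequality between $\{0,1\}$-valued quantities. For (iv), $\state\in\bigcup_{i=1}^n\sset[A]_i$ iff $\state\in\sset[A]_i$ for some $i$, so the universal condition $\utoper\chf{\sset[B]}(\state)=1$ for every $\state$ in the union is equivalent to it holding separately for every $\state\in\sset[A]_i$ for each $i$, i.e.\ to $\acsets(\sset[A]_i,\sset[B])=1$ for every $i$. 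Since all functions involved are $\{0,1\}$-valued, this is exactly $\min_{i}\acsets(\sset[A]_i,\sset[B])=\acsets\bigl(\bigcup_{i=1}^n\sset[A]_i,\sset[B]\bigr)$. I do not anticipate any real obstacle, as the statement is essentially a translation between the set-theoretic and functional formulations of strong accessibility.
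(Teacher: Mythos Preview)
Your proposal is correct and matches the paper's approach exactly: the paper simply states that the proposition follows immediately from the definitions and gives no further proof, and your argument is precisely the routine unpacking of those definitions using monotonicity and boundedness of upper expectations.
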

We define the following operations:
\begin{equation*}
 (\psets*\acsets)(\sset[B]) = \max_{\sset[A]\subseteq \states} \psets(\sset[A])\acsets(\sset[A], \sset[B])
\end{equation*}
and 
\begin{equation*}
 (\acsets*\acsets[\toper'])(\sset[A], \sset[B]) = \max_{\sset[C]\subseteq\states} \acsets(\sset[A], \sset[C])\acsets[\toper'](\sset[C], \sset[B]). 
\end{equation*}
The latter expression, when sequentially applied to the same function, allows defining its $n$-th power with
\begin{equation*}
 \acsets^n = \underbrace{\acsets *\cdots *\acsets}_{n-\text{times}}.
\end{equation*}
\begin{proposition}\label{psets-acsets-multi}
Let $\ifnal$ be an IEF, $\toper$ and $\toper[S]$ ITOs and $n$ a positive integer. Then
 \begin{enumerate}[{(i)}]
  \item $\psets[\ifnal\toper] = \psets[\ifnal]*\acsets[\toper];$
  \item $\acsets[{\toper \toper[S]}] = \acsets[\toper]*\acsets[{\toper[S]}];$
  \item $\acsets[{\toper^n}] = \acsets^n$;
  \item $\psets[{\ifnal\toper^n}] = \psets * \acsets^n$;
  \item $\displaystyle\psets[{[\lim_{n\to\infty}\ifnal\toper^n]}] \ge \lim_{n\to\infty}\psets * \acsets^n$ if the limits exist. Particularly, if $ET\subseteq E$, then we have the equality $\displaystyle\psets[{[\lim_{n\to\infty}\ifnal\toper^n]}] = \lim_{n\to\infty}\psets * \acsets^n$.
 \end{enumerate}
\end{proposition}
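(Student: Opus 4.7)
The plan is to handle the five parts in order, with each following either from the previous ones or from a common tool. The unifying observation is that all five statements concern when an upper expectation on a $[0,1]$-valued gamble equals $1$, and the key device throughout is the $\uifnal$-version of Lemma~\ref{x-krat-char}(ii): for $\gamble\in\allgambles_1$ one has $\uifnal(\gamble)=1$ iff $\uifnal(\chf{\{\gamble=1\}})=1$. This lets me convert a scalar upper-expectation equality into an explicit structural witness subset.

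For (i), I unfold both sides: $\psets[\ifnal\toper](\sset[B])=1$ reads $\uifnal(\utoper\chf{\sset[B]})=1$, whereas $(\psets[\ifnal]*\acsets[\toper])(\sset[B])=1$ demands a witness $\sset[A]$ with $\uifnal(\chf{\sset[A]})=1$ and $\utoper\chf{\sset[B]}|_{\sset[A]}\equiv 1$. The $(\ge)$ direction is immediate from monotonicity of $\uifnal$ applied to $\utoper\chf{\sset[B]}\ge\chf{\sset[A]}$; for $(\le)$ I take the canonical witness $\sset[A]:=\{\state:\utoper\chf{\sset[B]}(\state)=1\}$ and invoke the lemma on $\utoper\chf{\sset[B]}\in\allgambles_1$. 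Part (ii) follows the same template: unfold $\acsets[{\toper\toper[S]}](\sset[A],\sset[B])=1$ as $\utoper\utoper[S]\chf{\sset[B]}|_{\sset[A]}\equiv 1$, use monotonicity for $(\ge)$, and for $(\le)$ take the witness $\sset[C]:=\{\statey:\utoper[S]\chf{\sset[B]}(\statey)=1\}$, applying the lemma pointwise at each $\state\in\sset[A]$ to the IEF $\toper(\cdot|\state)$ acting on $\utoper[S]\chf{\sset[B]}$.

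Parts (iii) and (iv) are then routine. (iii) is induction on $n$ from (ii) via $\toper^n=\toper\cdot\toper^{n-1}$. (iv) iterates (i) after noting associativity of $*$, which in both cases reduces to interchanging two maxima over subsets of the finite set $\states$.

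Part (v) is the subtlest. Since $\states$ is finite and $\psets$ is $\{0,1\}$-valued, existence of the pointwise limit $\lim_n\psets*\acsets^n$ means the sequence is eventually constant on every subset; by (iv) it equals $\psets[{\ifnal\toper^n}]$. If the eventual value at $\sset[B]$ is $1$, then $\uifnal_n(\chf{\sset[B]})=1$ for all large $n$, and pointwise convergence $\ifnal\toper^n\to\ifnal_\infty$ forces $\uifnal_\infty(\chf{\sset[B]})=1$, proving $(\ge)$. Under $\ifnal\toper\subseteq\ifnal$ the monotonicity $\ifnal\subseteq\ifnal[F]\Rightarrow\ifnal\toper\subseteq\ifnal[F]\toper$ (immediate from $\overline{\ifnal\toper}(\gamble)=\uifnal(\utoper\gamble)$) gives by induction that $\{\ifnal\toper^n\}$ is decreasing, so $\uifnal_n(\chf{\sset[B]})$ is nonincreasing; hence $\uifnal_\infty(\chf{\sset[B]})=1$ forces $\uifnal_n(\chf{\sset[B]})\equiv 1$, ruling out any ``late drop'' to $0$ and giving equality. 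The main obstacle throughout is the conceptual step in (i) and (ii) of turning a scalar upper-expectation equality into an explicit subset on which the gamble is identically $1$; once Lemma~\ref{x-krat-char}(ii) supplies those witnesses, the rest is bookkeeping plus one monotonicity argument for (v).
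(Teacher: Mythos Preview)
Your proof is correct and follows essentially the same approach as the paper's: both hinge on Lemma~\ref{x-krat-char}(ii) to extract the witness set $\{\state:\utoper\chf{\sset[B]}(\state)=1\}$ (resp.\ $\{\statey:\utoper[S]\chf{\sset[B]}(\statey)=1\}$), and (v) is handled identically via monotonicity of the sequence under $\ifnal\toper\subseteq\ifnal$. The only cosmetic difference is in (ii), where the paper routes through Proposition~\ref{prop-psets-acsets}(i) and takes a per-state witness $C_\state$ before unioning, while you go directly to the single canonical witness; your version is slightly cleaner but not a different idea.
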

\begin{proof}
(i): Let us first show that $\psets[\ifnal]*\acsets[\toper]\le \psets[\ifnal\toper]$. Let $(\psets[\ifnal]*\acsets[\toper])(B) = 1$ for some $\sset[B]$. By definition, this implies existence of some $\sset[A]$ such that $\psets(\sset[A]) = 1$ and $\acsets(A, B) = 1$. In other words $\uifnal(\chf{A}) = 1$ and $\utoper \chf{B} (\state) = 1$ for every $\state\in \sset[A]$. Hence, by Lemma~\ref{x-krat-char} (i), $\uifnal\,\utoper(\chf{B}) = \uifnal(\utoper\chf{B}) \ge 1\cdot \uifnal(\chf{A}) = 1$, which means, by definition, that $\psets[\ifnal\toper](\sset[B])= 1$.
 
 Now suppose that $\psets[\ifnal\toper](\sset[B])= 1$ for some $\sset[B]$. By definition then $\uifnal\,\utoper(\chf{B}) = \uifnal(\utoper\chf{B}) = 1$, which by Lemma~\ref{x-krat-char}\,(ii) is only possible if $\uifnal(\chf{\{ \utoper\chf{B}=1\}}) = 1$. Then we denote $\sset=\{ \state \colon \utoper\chf{B}(\state)=1 \}$, which then satisfies $\psets(\sset) = 1$ and $\acsets(\sset, \sset[B]) = 1$, whence $(\psets * \acsets)(\sset[B]) = 1$, and this proves that $\psets * \acsets \ge\psets[\ifnal\toper]$. 
 
(ii): By Proposition~\ref{prop-psets-acsets}\,(i), $\acsets[{\toper \toper[S]}](A, B) = 1$ is equivalent to $\psets[{\toper \toper[S](\cdot|\state)}](B) = \psets[{\toper(\cdot|\state) \toper[S]}](B)= 1$ for every $\state\in A$. By (i), this is equivalent to $(\psets[{\toper(\cdot|\state)}]*\acsets[{\toper[S]}])(B) = 1$ for every $\state \in A$. Therefore, for every $\state \in A$ we have a set $C_{\state}$ such that $\psets[{\toper(\cdot|\state)}](C_{\state}) = 1$ and $\acsets[{\toper[S]}](C_{\state}, B) = 1$. Now let $C = \bigcup_{\state\in A} C_{\state}$ and, by Proposition~\ref{prop-psets-acsets}\,(ii) and (iv), we still have that $\psets[{\toper(\cdot|\state)}](C) = 1$ for every $\state\in  A$ and $\acsets[{\toper[S]}](C, B) = 1$. Hence, $\acsets(A, C) = 1$ and $\acsets[{\toper[S]}](C, B) = 1$, which implies $(\acsets[{\toper}]*\acsets[{\toper[S]}])(A, B) = 1$. 

Conversely, if we assume that $(\acsets[{\toper}]*\acsets[{\toper[S]}])(A, B) = 1$, this implies the existence of some $C$ such that $\acsets[{\toper}](A, C) = 1$ and $\acsets[{\toper[S]}](C, B) = 1$. By Proposition~\ref{prop-psets-acsets}\,(i), the first equality is equivalent to $\psets[{\toper(\cdot|\state)}](C) = 1$ for every $\state \in A$. Therefore $\psets[{\toper(\cdot|\state) \toper[S]}](B)= 1$ for every $\state\in A$; which, as follows from above, is equivalent to $\acsets[{\toper \toper[S]}](A, B) = 1$.

(iii) is an immediate consequence of (ii), and (iv) follows from (i) and (iii).

(v) Let $\lim_{n\to\infty}(\psets * \acsets^n)(\sset) = 1$. Then there must exist some $n_0$ such that $(\psets * \acsets^n)(\sset) = 1$ for every $n\ge n_0$ and this is equivalent to $\psets[{\ifnal\toper^n}](\sset) = 1$. Thus $\uifnal\,\utoper^n(A) = 1$ for every $n\ge n_0$, which implies that $\lim_{n\to\infty}\uifnal\,\utoper^n(\sset) = 1$ or $\displaystyle\psets[{[\lim_{n\to\infty}\ifnal\toper^n]}](\sset) = 1$.

If $ET\subseteq E$ then $ET^n\subseteq ET^{n-1}$ holds for every $n$. Now suppose that $\lim_{n\to\infty}\uifnal\,\utoper^n(\sset) = 1$. Then the same must hold for every $n\colon \uifnal\,\utoper^n(\sset) = 1$. Hence $\psets * \acsets^n(\sset)=1$ for every $n$ and so is the limit $\lim_{n\to\infty}\psets * \acsets^n(\sset)=1$.
\qed
\end{proof}
Let us give an example where we have strict inequality in Proposition~\ref{psets-acsets-multi}~(v). 
\begin{example}
 Take the transition matrix 
 \[ \toper = \begin{bmatrix}
         1 & 0 \\
         0.5 & 0.5
        \end{bmatrix}, \]
for a Markov chain with the set of states $\states = \{ \state_1, \state_2\}$. 
Clearly, starting with the initial probability distribution, say $q_0 = (0.5,  0.5)$, we have $\psets(A)=0$ for all strict subsets $A\subsetneq \states$.  The same holds for every distribution $q_n = q_0T^n$, but the limit distribution is $(1, 0)$. Hence, we have $\psets[{q_n}](A) = 0$ for every strict subset of $\states$, and the same for the limit $\lim_{n\to\infty} \psets[{q_n}](A)$. However, $\psets[{[\lim_{n\to\infty}q_n]}](\{ \state_1 \}) = 1$. 
\end{example}
\begin{corollary}\label{cor-sequence}
Let $A$ and $B$ be sets of states such that $A\sacs[n] B$. Then a sequence of sets 
\begin{equation}\label{eq-sequence-1}
A=A_0, A_1, \ldots, A_n = B 
\end{equation}
exists such that
\begin{equation}\label{eq-sequence-2}
A_i \sacs[1] A_{i+1}, \text{for every } i=0, \ldots, n-1.
\end{equation}
\end{corollary}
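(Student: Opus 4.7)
The plan is to reduce the claim directly to the iterative definition of $\acsets^n$ together with Proposition~\ref{psets-acsets-multi}~(iii), and then extract the chain of intermediate sets by induction on $n$.

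First I would restate the hypothesis in the language of the $\acsets$-functions. By definition, $A\sacs[n] B$ is equivalent to $\utoper^n\chf{B}(\state)=1$ for every $\state\in A$, which is in turn equivalent to $\acsets[{\toper^n}](A,B)=1$. Applying Proposition~\ref{psets-acsets-multi}~(iii) to rewrite $\acsets[{\toper^n}]=\acsets[\toper]^n$, the hypothesis becomes simply
\[
  \acsets[\toper]^n(A,B)=1.
\]

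Now I would proceed by induction on $n$. The case $n=1$ is immediate: take $A_0=A$ and $A_1=B$. For the inductive step, unfold one layer of the $*$-product:
\[
  \acsets[\toper]^n(A,B) \;=\; \max_{C\subseteq\states}\acsets[\toper]^{n-1}(A,C)\,\acsets[\toper](C,B).
\]
Since each factor takes only values in $\{0,1\}$, the maximum being equal to $1$ forces the existence of some $C\subseteq \states$ for which both $\acsets[\toper]^{n-1}(A,C)=1$ and $\acsets[\toper](C,B)=1$. Setting $A_{n-1}:=C$, the second equality gives $A_{n-1}\sacs[1] B$, and the first equality is exactly the hypothesis of the inductive statement for $n-1$ applied to the pair $(A,A_{n-1})$. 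By the induction hypothesis there exist $A=A_0,A_1,\ldots,A_{n-1}$ with $A_i\sacs[1] A_{i+1}$ for $i=0,\ldots,n-2$, and appending $A_n=B$ produces the required sequence~\eqref{eq-sequence-1}--\eqref{eq-sequence-2}.

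There is no substantive obstacle here: the only subtlety worth double-checking is the well-definedness of $\acsets^n$ (i.e.\ that the iterated $*$-product is associative, so that peeling off the last factor as above is legitimate), which follows from Proposition~\ref{psets-acsets-multi}~(ii) applied to $\toper[S]=\toper$. Once that is noted the argument is a direct two-line induction.
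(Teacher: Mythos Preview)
Your proof is correct and follows exactly the same route as the paper: both translate $A\sacs[n]B$ into $\acsets[\toper]^n(A,B)=1$ via Proposition~\ref{psets-acsets-multi}(iii) and then read off the intermediate sets from the definition of the iterated $*$-product. The paper merely says ``the existence of the sequence is now an immediate consequence of the definitions,'' whereas you spell out the induction explicitly; the content is the same.
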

\begin{proof}
	The relation $A\sacs[n] B$ is equivalent to $\acsets[\toper^n](A, B) = 1$, and by Proposition~\ref{psets-acsets-multi}\,(iii), this is again equivalent to $\acsets^n(A, B) = 1$. The existence of the sequence is now an immediate consequence of the definitions. 
\qed
\end{proof}
\subsection{Permanent classes}\label{ss-pc}
\begin{definition}
A set $\sset$ of states is called a \emph{permanent class} if for every $N\in\NN$ an $n\ge N$ and a set $B$ exist such that $\sset[B]\sacs[n] \sset$. Any set of states that is not permanent will be called an \emph{impermanent class}.
\end{definition}
In the case of precise Markov chains only absorbing classes are permanent, while the imprecise case allows much richer structure. 
\begin{example}
 Consider the transition operator from Example~\ref{ex-1}. Clearly, the disjoint subsets of states $\states_1$ and $\states_2$ are both permanent classes, despite the fact that $\states_2$ 'possibly' leads to $\states_1$. 
\end{example}
The following proposition is immediate. 
\begin{proposition}
 Every subset of states $\sset$ such that $\sset\sacs\sset$ is a permanent class. 
\end{proposition}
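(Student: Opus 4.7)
The plan is essentially a one-step iteration argument, since the hypothesis $\sset \sacs \sset$ is already almost the definition of permanent, except that it gives $\sset$ itself as the set $B$ and a single step count rather than arbitrarily large ones. So I would first unpack the hypothesis and then iterate.

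First, I would unpack $\sset \sacs \sset$ to obtain some fixed $k > 0$ with $\sset \sacs[k] \sset$, which by definition means $\acsets[\toper^k](\sset, \sset) = 1$. Next, the key observation is that this relation is closed under iteration: using Proposition~\ref{psets-acsets-multi}(ii) (and (iii)) together with Proposition~\ref{prop-psets-acsets}(iii), a straightforward induction gives
\[
\acsets[\toper^{mk}](\sset, \sset) \ge \underbrace{\acsets[\toper^k](\sset, \sset) \cdots \acsets[\toper^k](\sset, \sset)}_{m \text{ factors}} = 1
\]
for every $m \ge 1$, i.e. $\sset \sacs[mk] \sset$ for every $m \ge 1$.

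Finally, I would verify the definition of a permanent class directly. Given any $N \in \NN$, choose $m$ large enough that $n := mk \ge N$ and take $\sset[B] := \sset$; by the iteration step we have $\sset[B] \sacs[n] \sset$, witnessing the required pair $(n, \sset[B])$. Since $N$ was arbitrary, $\sset$ is a permanent class.

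There is no real obstacle here: the substance of the argument is carried by the multiplicativity established in Proposition~\ref{psets-acsets-multi}, and the rest is bookkeeping. The only small point worth stating carefully is that one must use the \emph{same} set $\sset$ on both sides when iterating, which is precisely what the hypothesis $\sset \sacs \sset$ (as opposed to $\sset \sacs \sset[B]$ for some auxiliary $\sset[B]$) provides.
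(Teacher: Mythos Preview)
Your proof is correct. The paper itself gives no proof at all, simply declaring the proposition ``immediate''; your argument supplies exactly the intended one-line reasoning (iterate $\sset \sacs[k] \sset$ to get $\sset \sacs[mk] \sset$ for all $m$, then pick $mk \ge N$), albeit dressed up with the $\acsets$ machinery where a direct monotonicity argument on $\utoper^k \chf{\sset} \ge \chf{\sset}$ would also suffice.
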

\begin{definition}
A permanent class $\sset[B]$ is \emph{minimal} if $\sset[B]\sacs\sset[B]$ and contains no proper subset $\sset[B]'\subset \sset[B]$ such that $\sset[B]'\sacs\sset[B]'$.
\end{definition}
We now examine some properties of permanent classes. First we give some equivalent definitions. 
\begin{proposition}\label{prop-pc-equivalent}
Let a set of states $A$ be given. The following propositions are equivalent:
\begin{enumerate}[{(i)}]
\item $A$ is a permanent class. 
\item For every $N\in \NN$ an $n\ge N$ exists and a sequence of sets $A_0, A_1, \ldots, A_n = A$ such that $A_i \sacs[1] A_{i+1}$ for every $i=0. \ldots, n-1$. 
\item For every $n\in \NN$ a sequence of sets with the above properties exists.
\item A permanent class $B$ exists such that $B\sacs A$.
\item A minimal permanent class $B$ exists such that $B\sacs A$.
\end{enumerate}
\end{proposition}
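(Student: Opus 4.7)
I would establish the cycle $(v)\Rightarrow(iv)\Rightarrow(i)\Rightarrow(v)$ together with the separate equivalences $(i)\Leftrightarrow(ii)\Leftrightarrow(iii)$.

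Several implications are routine. $(v)\Rightarrow(iv)$ is immediate from the definition of a minimal permanent class. For $(iv)\Rightarrow(i)$, fix a permanent $B$ with $B\sacs[k] A$; given any $N\in\NN$, permanence of $B$ supplies $m\ge N$ and a set $C$ with $C\sacs[m] B$, and Proposition~\ref{psets-acsets-multi}(ii)--(iii) then yields $C\sacs[m+k] A$ with $m+k\ge N$. For $(i)\Leftrightarrow(ii)$, the relation $B\sacs[n] A$ is, by Proposition~\ref{psets-acsets-multi}(iii), equivalent to $\acsets^n(B,A)=1$, which unfolded via the $*$-product is exactly the existence of a chain as in $(ii)$; this is essentially Corollary~\ref{cor-sequence} together with its converse. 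The direction $(iii)\Rightarrow(ii)$ is trivial, while $(ii)\Rightarrow(iii)$ follows by applying $(ii)$ with $N=n$ and retaining the last $n+1$ terms of the resulting chain, which still end at $A$ and still satisfy the single-step arrows.

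The substantive step is $(i)\Rightarrow(v)$. I would invoke $(iii)$ to fix a chain $A_0,A_1,\ldots,A_n=A$ of length $n\ge 2^{|\states|}$. Since the $n+1$ terms are all subsets of $\states$, the pigeonhole principle yields indices $0\le i<j\le n$ with $A_i=A_j=:C$. Concatenating the single-step arrows then gives $C\sacs[j-i] C$ and $C\sacs[n-i] A$, and since $i<n$ the second exponent is strictly positive, so $C\sacs C$ and $C\sacs A$. Next I would shrink $C$ to a minimal permanent class: let $\mathcal F=\{M\subseteq C : M\sacs M\}$, which contains $C$ and is finite, and pick an inclusion-minimal $M\in\mathcal F$. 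Then $M\sacs M$ makes $M$ a permanent class (by the proposition preceding the minimality definition), while any proper subset $M'\subsetneq M$ fails to lie in $\mathcal F$, so $M'\not\sacs M'$; hence $M$ is a minimal permanent class. Finally, $M\subseteq C$ together with the monotonicity of $\acsets$ in the first argument (Proposition~\ref{prop-psets-acsets}(iii)) gives $\acsets[\toper^{n-i}](M,A)\ge\acsets[\toper^{n-i}](C,A)=1$, whence $M\sacs A$.

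The main obstacle is precisely this shrinking step. Minimality of a permanent class is phrased purely in terms of the self-loop relation $\sacs$, but one must simultaneously preserve the auxiliary condition $M\sacs A$. The key is to notice that strong accessibility is monotone in its source, so restricting from $C$ to any $M\subseteq C$ cannot destroy $M\sacs A$; once this is observed, a plain pigeonhole argument followed by a greedy minimisation inside the power set of $C$ delivers the minimal permanent class required.
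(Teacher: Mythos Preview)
Your proof is correct. The routine implications and the equivalences $(i)\Leftrightarrow(ii)\Leftrightarrow(iii)$ are handled just as in the paper, via Corollary~\ref{cor-sequence} and truncation of chains. The substantive difference lies in how you reach $(v)$.

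The paper splits this into two steps. For $(i)\Rightarrow(iv)$ it looks at the \emph{penultimate} terms $A_{n,n-1}$ of infinitely many chains of increasing length and uses finiteness of $2^{\states}$ to find a single set $B$ occurring infinitely often, which is then permanent and satisfies $B\sacs[1]A$. For $(iv)\Rightarrow(v)$ it runs a ``maximal chain'' argument: extend $B\sacs A$ to a longest chain $B_1\sacs\cdots\sacs B_r=A$ with no backward arrows, argue that $B_1\sacs B_1$, and then shrink to a minimal permanent subclass. Your route is more direct: you take a \emph{single} chain of length exceeding $2^{|\states|}$, apply pigeonhole inside it to produce a set $C$ with $C\sacs C$ and $C\sacs A$, and then perform an inclusion-minimisation inside $\{M\subseteq C:M\sacs M\}$. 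The observation that monotonicity of $\acsets$ in its first argument (Proposition~\ref{prop-psets-acsets}(iii)) automatically preserves $M\sacs A$ when passing to subsets of $C$ is exactly the right glue, and it sidesteps the somewhat delicate maximal-chain reasoning of the paper. Your argument is shorter and more transparent; the paper's decomposition has the mild advantage of isolating $(iv)\Rightarrow(v)$ as a stand-alone statement, but nothing in the paper actually relies on that separation.
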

\begin{proof}
	The equivalence between (i), (ii) and (iii) clearly follows by Corollary~\ref{cor-sequence}.
	Further, (iv) and (v) clearly imply (iii). 	
	
	Now let $A$ be a permanent class. Then for every $n$ we have a sequence $A_{n,0}, \ldots, A_{n,n-1}$ such  that $A_{n,i}\sacs[1] A_{n, i+1}$ for every $n$ and $i<n$ and $A_{n, n-1}\sacs[1] A$. Because of finiteness, there exists a set $B$ such that $B=A_{n, n-1}$ for an infinite number of indices $n$, and this implies that $B$ satisfies (ii) and is therefore a permanent class. Thus, (i) implies (iv) as well. 
	
	To see that (iv) implies (v), let $A$ be a permanent class and $\sset[B]_1\sacs \sset[B]_2 \sacs \cdots \sacs\sset[B]_r = \sset$ a maximal chain such that $\sset[B]_{k+1}\not\sacs \sset[B]_k$ for every $k=1, \ldots, r-1$. By (iv) there exists a permanent class $\sset[C]$ such that $\sset[C]\sacs \sset[B]_1$. But then, by maximality of the chain, we must have that also $\sset[B]_1\sacs \sset[C]$, and consequently, by transitivity, $\sset[B]_1\sacs \sset[B]_1$. If there is a strict subset $\sset[B]'\subset \sset[B]_1$ that is a permanent class then we take it to be a minimal one. We have then $\sset[B]'\sacs\sset[B]_1$, by construction. But $\sset[B]_1\sacs \sset[B]'$, by maximality of the chain. Therefore, by transitivity, we again have that $\sset[B]'\sacs \sset[B]'$ which is now clearly a minimal permanent class, and such that $\sset[B]'\sacs \sset$. 
\qed
\end{proof}
\begin{proposition}\label{prop-mpc-com}
 Let $\sset[B]$ be a minimal permanent class. Then it is a communication class. Moreover, there is some $r\in \NN$ such that 
 \begin{equation}\label{eq-power-regular}
 	\utoper^r\chf{\{ \statey \}} (\statex) >0 \quad \text{for every }\statex, \statey \in \states
 \end{equation}
 and 
 \begin{equation}\label{eq-mpc-com-2}
 \utoper^r \chf{B}(\state) = 1 \quad \text{for every } \state\in B.
 \end{equation} 
 This means that for some $r\in \NN$ the chain with the ITO $\toper^r$ restricted to $B$ is regular.
\end{proposition}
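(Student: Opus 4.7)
The plan is to prove the three assertions in order: $B$ is a communication class; $B$ has period $1$ under $\acs$; and a single $r$ realises both \eqref{eq-power-regular} and \eqref{eq-mpc-com-2}. Minimality will be used in the first two steps as a non-existence principle: exhibiting a proper subset $A\subsetneq B$ with $A\sacs A$ would violate the minimal-permanent hypothesis on $B$.

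For the communication property, fix $\statex\in B$ and set $F(\statex)=\{\statey\in\states\colon \statex\acs\statey\}$, which is absorbing by transitivity of $\acs$, and put $A=B\cap F(\statex)$. Starting from any $m$ with $B\sacs[m] B$, I would prove by induction on $m$ the sharpened implication
\[ \utoper^m\chf{B}(\statex)=1 \;\Longrightarrow\; \utoper^m\chf{B\cap F(\statex)}(\statex)=1, \]
valid for every $\statex\in\states$. The inductive step applies Lemma~\ref{x-krat-char}(ii) to the IEF $\toper(\cdot|\statex)$ and the gamble $\utoper^{m-1}\chf{B}$ (whose supremum is $1$) to obtain a precise $\ptoper\in\toper(\cdot|\statex)$ loaded on $S:=\{\state[z]\colon \utoper^{m-1}\chf{B}(\state[z])=1\}$; each $\state[z]$ with $\ptoper(\state[z]|\statex)>0$ lies in $S\cap F(\statex)$, whence $F(\state[z])\subseteq F(\statex)$ by transitivity and the inductive hypothesis at $\state[z]$ upgrades $\utoper^{m-1}\chf{B}(\state[z])=1$ to $\utoper^{m-1}\chf{A}(\state[z])=1$; running $\ptoper$ against this improved gamble yields $\utoper^m\chf{A}(\statex)\ge \ptoper(\utoper^{m-1}\chf{A})(\statex)=1$. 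Applying the sharpened implication at each $\state[z]\in A$ (for which $F(\state[z])\subseteq F(\statex)$ still holds) yields $\utoper^m\chf{A}(\state[z])=1$, i.e.\ $A\sacs[m] A$. If $A\subsetneq B$ this contradicts minimality, hence $A=B$; letting $\statex$ vary over $B$ shows every pair in $B$ communicates.

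For period $1$, assume the common period $d$ of $B$ satisfies $d\ge 2$ with cyclic decomposition $B=B_0\cup\cdots\cup B_{d-1}$. For any $m_0$ with $B\sacs[m_0] B$ let $m=dm_0$. The period structure forces $\utoper^m\chf{\{\statey\}}(\statex)=0$ whenever $\statex\in B_i$ and $\statey\in B_j$ with $j\neq i$, so subadditivity gives $\utoper^m\chf{B_i}(\statex)=\utoper^m\chf{B}(\statex)=1$. Thus each $B_i$ is a proper subset of $B$ with $B_i\sacs[m] B_i$, contradicting minimality; hence $d=1$. Combining period $1$ with finite communication and the composition bound $\utoper^{a+b}\chf{\{\statey\}}(\statex)\ge \utoper^a\chf{\{\statex\}}(\statex)\,\utoper^b\chf{\{\statey\}}(\statex)$ (which follows from the pointwise inequality $\utoper^b\chf{\{\statey\}}(\statex)\,\chf{\{\statex\}}\le \utoper^b\chf{\{\statey\}}$ and monotonicity of $\utoper^a$), the standard numerical-semigroup argument produces a threshold $r_0$ with $\utoper^n\chf{\{\statey\}}(\statex)>0$ for all $\statex,\statey\in B$ and $n\ge r_0$. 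Taking $r$ to be a multiple of some $m$ with $B\sacs[m] B$ that also exceeds $r_0$ simultaneously delivers \eqref{eq-power-regular} and \eqref{eq-mpc-com-2}.

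The main obstacle is the inductive sharpening in the first stage. The precise $\ptoper$ attaining $\utoper(\utoper^{m-1}\chf{B})(\statex)=1$ must be shown to send mass only into $F(\statex)\cap S$, and the nested inclusions $F(\state[z])\subseteq F(\statex)$ at reachable $\state[z]$ must be chained so that the hypothesis is applicable at every level and the upgraded values fold back into the upper expectation at $\statex$. The separately-specified-rows structure of the ITO is what makes the extraction of such a $\ptoper$-row at $\statex$ meaningful independently of the behaviour at other states.
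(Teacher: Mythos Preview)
Your argument is correct. Both your proof and the paper's follow the same overarching scheme: produce a nonempty subset $A\subsetneq B$ with $A\sacs A$ and invoke minimality to force a contradiction, first to obtain communication and then aperiodicity. The specific constructions differ, however, and each has its merits.

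For the communication step, the paper works more directly: with $B\sacs[n]B$ fixed, it sets $\tilde B_x=\{y\in B: x\not\acs y\}$ and $B_x=\{x'\in B: x'\not\acs\tilde B_x\}$, and a single subadditivity step shows that if some $x'\in B_x$ had $\utoper^n\chf{B_x}(x')<1$ then $x'$ would weakly access some $y\in B\setminus B_x$, hence $\tilde B_x$, contradicting $x'\in B_x$. No induction on the exponent is needed. Your route, proving the sharpened implication $\utoper^m\chf{B}(\statex)=1\Rightarrow\utoper^m\chf{B\cap F(\statex)}(\statex)=1$ by induction on $m$ via Lemma~\ref{x-krat-char}(ii), is sound but longer; its payoff is that the resulting set $A=B\cap F(x)$ is conceptually simpler than the paper's $B_x$.

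For aperiodicity the situation reverses. The paper introduces $C_x=\{y\in B:\exists k,\ x\acs[kr_x]y\}$ and argues $C_x\sacs C_x$; this requires matching the exponent $kr_x$ with an exponent for which $B\sacs B$ holds, which the paper handles only at the end by passing to a power $\toper^s$. Your cyclic-decomposition argument is cleaner here: once $B$ is a communication class with period $d\ge2$, taking $m=dm_0$ and using $\utoper^m\chf{B_j}(x)=0$ for $x\in B_i,\ j\neq i$ together with subadditivity immediately gives $B_i\sacs[m]B_i$, contradicting minimality. The final numerical-semigroup step and the choice of a common $r$ are handled the same way in both proofs.
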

\begin{proof}
Let us first show that $B$ is a communication class.  We have that $\sset[B]\sacs\sset[B]$, say $\sset[B]\sacs[n]\sset[B]$ for some $n\in \NN$.
 Take some $\state\in\sset[B]$ and denote $\tilde{\sset[B]}_{\state} = \{ \statey\in\sset[B]\colon \state\not\acs \statey \}$ and further let $\sset[B]_{\state} = \{ \state'\in \sset[B] \colon \state' \not\acs \tilde{\sset[B]}_{\state}\}$. We show that $\sset[B]_{\state}\sacs[n] \sset[B]_{\state}$. Suppose contrary that there is some $\state'\in \sset[B]_{\state}$ such that $\utoper^n\chf{\sset[B]_x}(\state')<1$. By subadditivity of $\utoper^n(\cdot|\state')$ we have that $\utoper^n\chf{\sset[B]\backslash\sset[B]_x}(\state')\ge \utoper^n\chf{\sset[B]}(\state')-\utoper^n\chf{\sset[B]_x}(\state')>0$. Hence, there must be some $\statey\in \sset[B]\backslash \sset[B]_x$ such that $\utoper^n\chf{\{ \statey \}}(\state')>0$, or equivalently, $\state'\acs[n]\statey$. But since $\statey\not\in\sset[B]_x$, $\statey\acs\tilde{\sset[B]}_{\state}$, which together implies that $\state'\acs \tilde{\sset[B]}_{\state}$, which is in contradiction with $\state'\in \sset[B]_x$. 
 
 It is therefore confirmed that $\sset[B]_{\state}\sacs \sset[B]_{\state}$, which contradicts minimality of $\sset[B]$, unless $\sset[B]_{\state}=\emptyset$. Together this proves that $B$ is a communication class. 
 
Let us now show that some $r\in \NN$ exists so that equations \eqref{eq-power-regular} and \eqref{eq-mpc-com-2} are satisfied. Let $\state\in B$ and denote with $r_{\state}$ the greatest common divisor of all $r$ such that $\state\acs[r]\state$. Now denote $C_{\state} = \{\statey\in B\colon  \exists k, \state \acs[kr_{\state}] \statey \}$. 

Since $\utoper^{kr_{\state}}\chf{B\backslash C_{\state}} (\statey) = 0$ for every $\statey\in C_{\state}$, which implies that $\utoper^{kr_{\state}}\chf{B_{\state}} (\statey) = 1$ for every $\statey\in C_{\state}$. Thus, $C_{\state}\sacs[kr_{\state}]C_{\state}$ and therefore, $C_{\state}\sacs C_{\state}$. Since $B$ is a minimal permanent class and $C_{\state}$ a non-empty subset, it must therefore hold that $C_{\state} = B$ for every $\state\in B$. 
 
We now show that there is some $r\in\NN$ such that $\utoper^r\chf{\{ \statey \}}(\state)>0$ for every $\state, \statey\in B$. It follows from the definition of $r_{\state}$ and the elementary properties of accessibility relations that there is some $L\in \NN$ such that $\state\acs[lr_{\state}]\state$ for every $l\ge L$, and for every $\statey\in B$ there exists some $l_{\statey}$ such that $\state\acs[l_{\statey}r_{\state}]\statey$. Denote $M_{\state} = \max_{\statey\in B}(L + l_{\statey})$. Then $\state\acs[mr_{\state}]\statey$ for every $\statey\in B$ and every $m\ge M_{\state}$. Now denote $R=\prod_{\state\in B} r_{\state}$ and $M=\max_{\state\in B} M_{\state}$. Clearly then $\statex \acs[mR]\statey$ for every $\statex$ and $\statey\in B$, whenever $m\ge M$. 

Because  $B$ is a minimal permanent class, we have that $B\sacs[s]B$. If $s=1$ then \eqref{eq-mpc-com-2} is satisfied for every $r\in \NN$ and therefore also for $r=mR$. While in the case where $s>1$ we can replace $\toper$ with $\toper^s$ and reason as above to show that $(\toper^s)^{mR}=\toper^{smR}$ satisfies both properties required, since, as can easily be noted, $B$ is still a minimal permanent class when $\toper$ is replaced with $\toper^s$.
\qed
\end{proof}
Note that the above proposition does not claim that a minimal permanent classes would be whole communication classes, but merely that every minimal permanent class lies in a single communication class. 
\begin{definition}
Let $B$ be a minimal permanent class, such that $B\sacs[r] B$. Then we define 
\begin{equation}\label{eq-TBr}
\toper^r_B := \{ \ptoper\in \toper^r\colon \ptoper \chf{B}(\state) = 1 \text{ for every }\state\in B \}.
\end{equation}
\end{definition}
\begin{proposition}\label{prop-mpc-abs-reg}
	Let $B$ be a minimal permanent class, such that $B\sacs[r] B$. Then $B$ is regular and absorbing with respect to the weak accessibility relation induced by $(\toper_B^r)^k$ for some $k\in \NN$. 
\end{proposition}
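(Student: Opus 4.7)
The plan is to establish absorbing and regularity separately, and in fact $k=1$ already suffices. Absorbing is immediate from the definition of $\toper^r_B$: every $\ptoper\in\toper^r_B$ satisfies $\ptoper\chf{B}(\state)=1$ for $\state\in B$, this property is preserved under composition, and hence $\overline{((\toper^r_B)^k)^n}\chf{\{\statey\}}(\state)=0$ whenever $\state\in B$ and $\statey\notin B$.

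For regularity I would proceed in three steps. First, fix $\state\in B$ and put $R_\state=\{\statey\in B\colon \overline{(\toper^r_B)^m}\chf{\{\statey\}}(\state)>0\text{ for some }m\ge 0\}$. For any $\statey\in R_\state$ and $q\in\toper^r_B(\cdot|\statey)$, every $z$ with $q(z)>0$ satisfies $\overline{\toper^r_B}\chf{\{z\}}(\statey)>0$, hence $z\in R_\state$; combining with $q(B)=1$ forces $q(R_\state)=1$. Thus $\overline{\toper^r}\chf{R_\state}(\statey)\ge\overline{\toper^r_B}\chf{R_\state}(\statey)=1$, so $R_\state\sacs[r]R_\state$ under $\toper$, and minimality of $B$ forces $R_\state=B$ since $R_\state\ni\state$. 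Second, set $S_{\state,\statey}=\{m\ge 0\colon \overline{(\toper^r_B)^m}\chf{\{\statey\}}(\state)>0\}$; using Lemma~\ref{x-krat-char}(i) together with non-negative homogeneity and monotonicity, one verifies that $m_1\in S_{\state,z}$ and $m_2\in S_{z,\statey}$ imply $m_1+m_2\in S_{\state,\statey}$. Each $S_{\state,\state}$ is therefore an additive submonoid of $\NN$ containing a positive element (by the first step), so its period $d_\state=\gcd(S_{\state,\state}\setminus\{0\})$ is well defined, and shuttling elements through pairs $m_1\in S_{\state,\statey}$, $m_2\in S_{\statey,\state}$ shows $d_\state$ is the same for all $\state\in B$, say equal to $d$.

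Third and most delicate, I claim $d=1$. Suppose not; fix $\state_0\in B$ and observe that the composition identity forces all elements of $S_{\state_0,\statey}$ to share a common residue modulo $d$, which well defines a partition $B=B_0\sqcup\cdots\sqcup B_{d-1}$. The same support-closure argument as in the first step shows that every $q\in\toper^r_B(\cdot|\statey)$ with $\statey\in B_i$ is concentrated on $B_{i+1\bmod d}$; iterating, $(\toper^r_B)^d$ preserves each $B_i$, so $B_0\sacs[rd]B_0$ under $\toper$. From $\overline{\toper^r_B}\chf{B}(\state_0)=1$ one obtains some $\statey\in B$ with $1\in S_{\state_0,\statey}$, placing $\statey$ in $B_1$, hence $B_0\subsetneq B$, contradicting minimality of $B$ as a permanent class. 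So $d=1$, and a standard numerical-semigroup result implies each $S_{\state,\state}$, and then each $S_{\state,\statey}$, contains all sufficiently large integers, which is precisely regularity of $B$ under the weak accessibility relation of $\toper^r_B$. The main obstacle is this last step: producing the cyclic classes $B_i$ and verifying they are strongly closed under $\toper^r_B$ is the place where the minimality hypothesis is converted into aperiodicity.
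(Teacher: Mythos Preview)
Your proof is correct and actually establishes more than the paper claims: you show that $k=1$ already suffices, whereas the paper only asserts existence of some $k$. The paper's proof is a four-line reduction to Proposition~\ref{prop-mpc-com}: one observes that $B$ remains a minimal permanent class for $\toper_B^r$ (because $\toper_B^r\subseteq\toper^r$, so no proper subset can become strongly self-accessible), and then Proposition~\ref{prop-mpc-com} delivers both the communication property and regularity of some power $(\toper_B^r)^k$; absorbing is immediate by construction. Your argument instead re-establishes the communication structure directly via the reachability sets $R_\state$, and then adds a new ingredient: the cyclic-class decomposition $B_0,\ldots,B_{d-1}$, which converts minimality of $B$ into aperiodicity. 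The point that makes this work for $\toper_B^r$ (but would fail for $\toper$ itself) is that every $q\in\toper_B^r(\cdot|\statey)$ is concentrated on $B$, so its support lies entirely in the next cyclic class; this yields $B_i\sacs[r] B_{i+1}$ under $\toper$, hence $B_0\sacs[rd] B_0$, contradicting minimality unless $d=1$. What the paper's approach buys is brevity and modularity; what yours buys is a sharper conclusion and independence from Proposition~\ref{prop-mpc-com}.
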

\begin{proof}
Let $\acs$ and $\sacs$ now denote the weak and the strong accessibility relation induced by $\toper_B^r$. By construction we have that $B\sacs B$ and since $\toper_B^r$ is a subset of $\toper^r$ this relation cannot hold for any proper subset of $B$. Therefore $B$ is a minimal permanent class with respect to $\toper_B^r$ as well. 

By Proposition~\ref{prop-mpc-com} it is then a communication class and, moreover, for some $k\in \NN$ the power $(\toper_B^r)^k$ restricted to $B$ is regular. By construction, $B$ is also absorbing with respect to $\toper_B^r$ and therefore, with respect to $(\toper_B^r)^k$ as well. 
\qed
\end{proof}
\section{Unique convergence for extremal IEFs}\label{s-cuc}
In this section we use the results prepared in previous sections to state our main results on convergence of imprecise Markov chains. We will explore the convergence of the class of so called extremal IEFs, which are those which induce the upper probabilities of minimal permanent classes being either $0$ or $1$. In the next subsection we show that when the sequence $\{ \ifnal \toper^n \}$ consists of extremal IEFs this sequence converges if the sequence $\ifnal(\toper^n \chf{B})$ converges (either to $0$ or $1$) for every minimal permanent class $B$, and the limit is uniquely determined by the limits $\lim_{n\to\infty}\ifnal(\toper^n\chf{B})$ for all minimal permanent classes $B$. 

Moreover, in Subsection~\ref{ss-impc} we show that it is sufficient for the unique convergence that the limits $\lim_{n\to\infty}\ifnal(\toper^n\chf{B})$ are zero-one valued, rather than all terms of the sequences. 
\subsection{Uniqueness of invariant imprecise expectation functionals}\label{ss-uiief}
The importance of permanent classes is illustrated with the following proposition. 
\begin{proposition}
  Let $\ifnal[M]$ be the unique least committal $\toper$-invariant IEF. Then $\psets[{\ifnal[M]}](\sset) = 1$ if and only if $\sset$ is a permanent class.
\end{proposition}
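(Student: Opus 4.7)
The plan is to realise $\ifnal[M]$ as the decreasing limit of $\ifnal[V]\toper^n$, where $\ifnal[V]$ is the vacuous IEF on $\states$. Since $\ifnal[V]$ contains every IEF, $\ifnal[V]\toper\subseteq\ifnal[V]$, so Proposition~\ref{limit-invar} produces a monotone sequence $\ifnal[V]\toper^n$ with a $\toper$-invariant limit; as observed right after that proposition, choosing $\ifnal_0=\ifnal[V]$ makes this limit the unique least committal $\toper$-invariant IEF, i.e.\ $\ifnal[M]$ itself.

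The same hypothesis $\ifnal[V]\toper\subseteq\ifnal[V]$ triggers the equality case of Proposition~\ref{psets-acsets-multi}(v), yielding
\[ \psets[{\ifnal[M]}] \;=\; \lim_{n\to\infty}\psets[{\ifnal[V]}]*\acsets^n. \]
Reading off $\uifnal[V](\chf{C})=\max_{\state\in\states}\chf{C}(\state)$ gives $\psets[{\ifnal[V]}](C)=1$ iff $C\neq\emptyset$. Combined with parts (iii) and (iv) of Proposition~\ref{psets-acsets-multi}, this unfolds the product as
\[ (\psets[{\ifnal[V]}]*\acsets^n)(\sset) = 1 \iff \exists\,C\neq\emptyset\colon\ C\sacs[n]\sset. \]

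I would then exploit monotonicity: since $\ifnal[V]\toper^n$ is decreasing, the $\{0,1\}$-valued sequence $\psets[{\ifnal[V]\toper^n}](\sset)$ is non-increasing, so its limit equals $1$ exactly when every term equals $1$. Consequently $\psets[{\ifnal[M]}](\sset)=1$ iff for every $n\in\NN$ some non-empty $C$ satisfies $C\sacs[n]\sset$. By Corollary~\ref{cor-sequence} this is precisely clause (iii) of Proposition~\ref{prop-pc-equivalent}, which characterises $\sset$ as a permanent class, delivering both directions of the equivalence simultaneously.

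The one point worth watching is the empty set, which strongly leads into every set vacuously: it is harmlessly excluded on the left by $\psets[{\ifnal[V]}](\emptyset)=0$ and on the right by the implicit non-emptiness convention in the definition of a permanent class (otherwise every $\sset$ would trivially qualify). No delicate estimate is needed; the proof is essentially a transcription through the $\psi$/$\Theta$ calculus assembled in Subsection~\ref{ss-sar}, with Proposition~\ref{psets-acsets-multi}(v) providing the only non-routine input.
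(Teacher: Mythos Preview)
Your proposal is correct and follows essentially the same route as the paper's proof: both identify $\ifnal[M]$ with $\lim_{n\to\infty}\ifnal[V]\toper^n$, invoke the equality case of Proposition~\ref{psets-acsets-multi}(v) to obtain $\psets[{\ifnal[M]}]=\lim_{n\to\infty}\psets[{\ifnal[V]}]*\acsets^n$, and then read the condition $(\psets[{\ifnal[V]}]*\acsets^n)(\sset)=1$ as the existence of a non-empty $C$ with $C\sacs[n]\sset$. Your version is slightly tidier in two respects: you make the monotonicity of the $\{0,1\}$-valued sequence $\psets[{\ifnal[V]\toper^n}](\sset)$ explicit (so that the limit equals~$1$ iff every term does), and you cite Proposition~\ref{prop-pc-equivalent}(iii) directly rather than appealing to the definition of a permanent class; the paper handles both points implicitly.
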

\begin{proof}
The unique least committal IEF $\ifnal[M]$ is equal to $\lim_{n\to\infty}\ifnal[V]\toper^n$, where $\ifnal[V]$ is the vacuous IEF on $\states$. Since $\ifnal[V]\toper \subseteq \ifnal[V]$, Proposition~\ref{psets-acsets-multi}\,(v) gives that 
\[ \psets[M] = \psets[{[\lim_{n\to\infty}\ifnal[V]\toper^n]}] = \lim_{n\to\infty} \psets[V]*\acsets^n. \]
If $\psets[M](A) = 1$, this implies that for every $n\in\NN$ we have that $\psets[V]*\acsets^n(A)=1$, or equivalently, there exists some $B$ such that $\psets[V](B)=1$ and $B\sacs[n] A.$ Hence, $A$ is a permanent class. 

Moreover, if $A$ is a permanent class, then for every large enough $n\in\NN$ there exists some $B$ such that $B\sacs[n] A$, and since $\psets[V](B)=1$ for every $B\subseteq \states$, then $\psets[V]*\acsets^n(A)=1$. As this holds for every large enough integer $n$, so it does for the limit, and implies that $\psets[M](A)=1$. 
\qed
\end{proof}
\begin{definition}\label{def-essmax-me}
	Let $\ifnal$ be an IEF and $\gamble$ a gamble. Then we define \emph{essential maximum} of $\gamble$ given $\ifnal$ with
	\[ \essmax_{\ifnal}\gamble = \max\{ a\colon \lifnal(\chf{\{f\ge a\}})>0 \}, \]
and
	\[ m_{\ifnal} = \min\{ \lifnal(\chf{A})\colon A\subseteq \states, \lifnal(\chf{A})>0 \}. \]
\end{definition}
\begin{proposition}\label{p_es_m}
Let $\ifnal$ and $\ifnal[F]$ be IEFs and $\gamble$ a gamble. The following propositions hold:
\begin{enumerate}[{(i)}]
\item $\lifnal(\gamble) \le \essmax_{\ifnal}f$.
\item If $f\ge 0$ then $\lifnal(\gamble) \ge m_{\ifnal}\cdot \essmax_{\ifnal}f$.
\item If $\psets = \psets[F]$, then $\essmax_{\ifnal[F]}f= \essmax_{\ifnal}f$. 
\item Let $\{ \ifnal_n \}$ be a sequence of IEFs such that $\ifnal_{n+1}\subseteq \ifnal_n$. Then $\lim_{n\to \infty} m_{\ifnal_n}$ exists and there is an $N\in\NN$ such that $\{ m_{\ifnal_n} \}_{n\ge N}$ is a non-decreasing sequence.
\end{enumerate}
\end{proposition}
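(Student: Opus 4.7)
The plan is to handle the four items sequentially, exploiting two structural facts throughout: $\states$ is finite (so every gamble takes only finitely many values and there are only finitely many subsets of $\states$), and every IEF is compact, so the minimum in $\lifnal(\gamble)=\min_{\fnal\in\ifnal}\fnal(\gamble)$ is attained at some precise $\fnal\in\ifnal$.

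For part (i) I would set $a^{*}=\essmax_{\ifnal}\gamble$ and take an arbitrary $a>a^{*}$. By definition $\lifnal(\chf{\{\gamble\ge a\}})=0$, so some $\fnal\in\ifnal$ achieves $\fnal(\chf{\{\gamble\ge a\}})=0$. Because $\states$ is finite, this forces $\fnal(\chf{\{\state\}})=0$ for every $\state$ with $\gamble(\state)\ge a$, and then $\fnal(\gamble)=\sum_{\state\colon\gamble(\state)<a}\gamble(\state)\fnal(\chf{\{\state\}})<a$, so $\lifnal(\gamble)\le\fnal(\gamble)<a$. Letting $a\downarrow a^{*}$ gives (i). The same finiteness observation shows that the max defining $\essmax_{\ifnal}\gamble$ is attained at one of the values of $\gamble$, which I use below.

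Part (ii) is then immediate from Lemma~\ref{x-krat-char}(i), which its own statement permits to be applied with $\lifnal$ in place of $\fnal$. Since $\gamble\ge 0$, we have $a^{*}=\essmax_{\ifnal}\gamble\ge 0$, and the lemma yields $\lifnal(\gamble)\ge a^{*}\lifnal(\chf{\{\gamble\ge a^{*}\}})$; because $a^{*}$ is attained we have $\lifnal(\chf{\{\gamble\ge a^{*}\}})>0$, so the definition of $m_{\ifnal}$ bounds this factor below by $m_{\ifnal}$. For part (iii) I would rewrite the hypothesis $\psets=\psets[F]$ using conjugacy as follows: $\uifnal(\chf{A})=1-\lifnal(\chf{\states\setminus A})$, so $\psets(A)=\psets[F](A)$ for every $A$ is equivalent to $\lifnal(\chf{B})>0\Leftrightarrow\lifnal[F](\chf{B})>0$ for every $B\subseteq\states$. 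Applying this with $B=\{\gamble\ge a\}$ shows that the defining set $\{a\colon\lifnal(\chf{\{\gamble\ge a\}})>0\}$ of $\essmax_{\ifnal}\gamble$ coincides with the corresponding set for $\ifnal[F]$, and hence so do their maxima.

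Part (iv) is the main obstacle, though finiteness makes it tractable. For every fixed $A\subseteq\states$ the sequence $n\mapsto\lifnal_{n}(\chf{A})$ is non-decreasing, because shrinking $\ifnal_{n}$ to $\ifnal_{n+1}$ only shrinks the set over which the minimum is taken. Consequently the family $\mathcal{A}_{n}:=\{A\colon\lifnal_{n}(\chf{A})>0\}$ is non-decreasing in $n$ inside the finite power set of $\states$, so it stabilises: there is some $N$ with $\mathcal{A}_{n}=\mathcal{A}_{N}$ for all $n\ge N$. For such $n$ we have $m_{\ifnal_{n}}=\min_{A\in\mathcal{A}_{N}}\lifnal_{n}(\chf{A})$, a pointwise minimum of a fixed finite family of non-decreasing sequences, hence itself non-decreasing; being bounded above by $1$, the limit exists. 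The delicate point, and the reason only eventual monotonicity is claimed, is that for $n<N$ a newly admitted set $A$ can have smaller lower probability than current members of $\mathcal{A}_{n}$ and so can pull $m_{\ifnal_{n}}$ down; monotonicity only kicks in once this admission process terminates.
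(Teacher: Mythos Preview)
Your proof is correct and follows essentially the same route as the paper: both use a precise $\fnal\in\ifnal$ supported on $\{\gamble\le\essmax_{\ifnal}\gamble\}$ for (i), Lemma~\ref{x-krat-char}(i) for (ii), the conjugacy $\uifnal(\chf{A})=1-\lifnal(\chf{\states\setminus A})$ for (iii), and the stabilisation of the finite family $\{A:\lifnal_n(\chf{A})>0\}$ for (iv). The only cosmetic difference is in (i), where the paper works directly with the set $\{\gamble>\essmax_{\ifnal}\gamble\}$ rather than taking $a>a^{*}$ and passing to the limit; your limiting step is unnecessary but harmless.
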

\begin{proof}
Let in this proof $\tilde\gamble$ denote $\essmax_{\ifnal} \gamble$.

To see (i), take some $\fnal\in \ifnal$ such that $\fnal(\chf{\{\gamble>\tilde \gamble\}})=0$. This means that $\fnal(\chf{\{\state\}}) = 0$ for every $\state\in\states$ such that $\gamble(\state)>\tilde \gamble$. Then 
  \[ \lifnal(\gamble) \le \fnal(\gamble) = \sum_{\state\in\states} \fnal(\state) \gamble(\state) = \sum_{\state\colon \gamble(\state)\le \tilde\gamble} \fnal(\state) \gamble(\state)\le \tilde\gamble . \]
  
By Lemma~\ref{x-krat-char}~(i) and (i) of this proposition, we obtain
  \begin{equation*}
    \lifnal(\gamble) \ge \tilde\gamble\lifnal(\chf{\{\gamble\ge \tilde \gamble\}}) \ge \tilde \gamble m_{\ifnal},
  \end{equation*}
which proves (ii).

To see (iii) notice that $\psets = \psets[F]$ if and only if $\lifnal(\chf{\{\gamble>a\}}) > 0$ whenever $\lifnal[F](\chf{\{\gamble>a\}}) > 0$.

(iv): To see this note that the set $\{ \lifnal_n(\chf{\sset})\colon n\in \NN\}$ is a non-decreasing sequence for every $\sset\subseteq\states$, either constantly equal to 0, or contains a minimal non-zero element. Because of the finite number of subsets of $\states$ we have at most a finite number of positive minima, whose minimum exists and is positive as well. 
	
	Clearly, there is also an $N\in \NN$ such that  for every $\sset\subseteq \states$ either $\lifnal_n(\chf{\sset})>0$ for every $n\ge N$ or $\lifnal_n(\chf{\sset})=0$ for every $n\ge N$. Therefore, $\{ m_{\ifnal_n}\}_{n\ge N}$ is a non-decreasing sequence.  
\qed
\end{proof}
\begin{corollary}\label{cor-max-quotient}
	Let $\ifnal$ and $\ifnal[F]$ be IEFs such that $\psets = \psets[F]$. Then 
	\begin{equation}\label{es_m_quotient}
		 \lifnal(\gamble) \ge  m_{\ifnal}\cdot \lifnal[F](\gamble)
	\end{equation}	
	for every gamble $\gamble\ge 0$.
\end{corollary}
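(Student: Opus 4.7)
The plan is to chain together parts (i), (ii), and (iii) of Proposition~\ref{p_es_m}, which have been set up precisely so that this corollary falls out in one line.

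First I would apply part (ii) to the IEF $\ifnal$ and the non-negative gamble $\gamble$: since $\gamble\ge 0$, we get $\lifnal(\gamble) \ge m_{\ifnal}\cdot \essmax_{\ifnal}\gamble$. Next, the hypothesis $\psets = \psets[F]$ lets me invoke part (iii) to replace the essential maximum with respect to $\ifnal$ by the essential maximum with respect to $\ifnal[F]$, namely $\essmax_{\ifnal}\gamble = \essmax_{\ifnal[F]}\gamble$. Finally, part (i) applied to $\ifnal[F]$ gives $\lifnal[F](\gamble)\le \essmax_{\ifnal[F]}\gamble$, so multiplying this by the non-negative constant $m_{\ifnal}$ (which is non-negative by its definition as a minimum of lower probabilities) preserves the inequality.

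Combining these three steps yields
\[
\lifnal(\gamble) \ge m_{\ifnal}\cdot \essmax_{\ifnal}\gamble = m_{\ifnal}\cdot \essmax_{\ifnal[F]}\gamble \ge m_{\ifnal}\cdot \lifnal[F](\gamble),
\]
which is exactly \eqref{es_m_quotient}. There is no genuine obstacle here; the only thing to double-check is that the chain of inequalities preserves its direction, which it does because $m_{\ifnal}\ge 0$ and $\gamble\ge 0$ ensure that all quantities involved are non-negative and that the monotonicity used in part (ii) applies. In short, the corollary is a direct consequence of the three parts of the preceding proposition, with the hypothesis $\psets=\psets[F]$ serving solely to bridge the essential maxima of the two functionals.
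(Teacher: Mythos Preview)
Your proof is correct and follows essentially the same route as the paper: apply Proposition~\ref{p_es_m}(ii) to $\ifnal$, use (iii) to identify $\essmax_{\ifnal}\gamble$ with $\essmax_{\ifnal[F]}\gamble$ via the hypothesis $\psets=\psets[F]$, and then bound $\essmax_{\ifnal[F]}\gamble$ below by $\lifnal[F](\gamble)$ using (i). The paper's proof is the same chain of inequalities, just stated a bit more tersely.
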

\begin{proof}
	By Proposition~\ref{p_es_m}(ii) we have that $\lifnal(\gamble) \ge m_{\ifnal}\cdot \essmax_{\ifnal}f$, and by (i) of the same proposition we have that $\lifnal[F](\gamble) \le \essmax_{\ifnal[F]}f = \essmax_{\ifnal}f$. This together implies \eqref{es_m_quotient}. 	
\qed
\end{proof}
\begin{corollary}\label{cor-vacuous-mixture}
	Let $\ifnal$ and $\ifnal[F]$ be IEFs such that $\psets = \psets[F]$, $\supp(\ifnal[F]) \subseteq \supp(\ifnal[E])$, and $\ifnal[V]_{\ifnal}$ the vacuous IEF on $\supp(\ifnal)$. Then 
\[ 	\ifnal \subseteq m_{\ifnal}\ifnal[F] + (1-m_{\ifnal})\ifnal[V]_{\ifnal}. \]
\end{corollary}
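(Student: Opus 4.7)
The plan is to convert the set-inclusion into a pointwise inequality between lower expectation functionals and then reduce, via a constant shift followed by the gamble-support trick, to a situation where Corollary~\ref{cor-max-quotient} applies directly. Recall that $\ifnal\subseteq \ifnal[F]$ iff $\lifnal(\gamble)\ge \lifnal[F](\gamble)$ for every gamble $\gamble$, and that the lower expectation of a convex combination of IEFs is the corresponding convex combination of the individual lower expectations. Since $\lifnal[V]_{\ifnal}(\gamble)=\min_{\state\in\supp(\ifnal)}\gamble(\state)$, the target inclusion is equivalent to
\[ \lifnal(\gamble) \ge m_{\ifnal}\,\lifnal[F](\gamble) + (1-m_{\ifnal})\min_{\state\in\supp(\ifnal)}\gamble(\state) \qquad\text{for every }\gamble\in\allgambles. \]

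Next I would set $c:=\min_{\state\in\supp(\ifnal)}\gamble(\state)$ and $\gamble':=\gamble-c\,\chf{\states}$. By constant additivity of both $\lifnal$ and $\lifnal[F]$, the inequality above becomes simply
\[ \lifnal(\gamble')\ge m_{\ifnal}\,\lifnal[F](\gamble'). \]
The shifted gamble $\gamble'$ is $\ge 0$ on $\supp(\ifnal)$ by construction, but it may take negative values outside. Because $\supp(\ifnal[F])\subseteq \supp(\ifnal)$, Proposition~\ref{gamble-support} applies to both $\ifnal$ and $\ifnal[F]$ with $C=\supp(\ifnal)$, letting me replace $\gamble'$ by $\gamble'\cdot \chf{\supp(\ifnal)}$ on both sides without changing either lower expectation. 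The resulting gamble is non-negative on all of $\states$, so Corollary~\ref{cor-max-quotient} (whose hypothesis $\psets=\psets[F]$ is given) delivers precisely the required inequality.

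The main obstacle I expect is the non-negativity requirement in Corollary~\ref{cor-max-quotient}: the natural centring by the minimum of $\gamble$ over $\supp(\ifnal)$ only makes $\gamble'$ non-negative on that support, not globally. It is exactly the hypothesis $\supp(\ifnal[F])\subseteq \supp(\ifnal)$ that makes the truncation by $\chf{\supp(\ifnal)}$ leave both $\lifnal$ and $\lifnal[F]$ simultaneously untouched via Proposition~\ref{gamble-support}, converting $\gamble'$ into a globally non-negative gamble; without this support containment the reduction to Corollary~\ref{cor-max-quotient} would not go through cleanly.
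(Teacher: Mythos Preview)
Your proposal is correct and follows essentially the same approach as the paper's proof: reduce the inclusion to a lower-expectation inequality, use the constant shift by $\min_{\state\in\supp(\ifnal)}\gamble(\state)$ together with Proposition~\ref{gamble-support} (enabled by $\supp(\ifnal[F])\subseteq\supp(\ifnal)$) to reduce to a non-negative gamble, and then apply Corollary~\ref{cor-max-quotient}. The only cosmetic difference is the order of the two reductions---the paper multiplies by $\chf{\supp(\ifnal)}$ first and then shifts, whereas you shift first and then multiply; your order makes the global non-negativity needed for Corollary~\ref{cor-max-quotient} slightly more transparent.
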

\begin{proof}
We need to prove that 	
\begin{equation}\label{ineq-vacuous-mixture}
		\lifnal (\gamble) \ge m_{\ifnal} \lifnal[F](\gamble) + (1-m_{\ifnal}) \lifnal[V]_{\ifnal}(\gamble)
\end{equation}
holds for every gamble $\gamble\in \allgambles(\states)$. Let $\gamble$ be a gamble and denote $h=\gamble\cdot \chf{\supp(\ifnal)}$. Then, by Proposition~\ref{gamble-support}, we have that $\ifnal[H](\gamble) = \ifnal[H](h)$ where $\ifnal[H]$ stands for $\ifnal, \ifnal[F]$ and $\ifnal[V]_{\ifnal}$ respectively.  

Denote $\underline{h}=\min_{\state\in\supp(\ifnal)}\gamble(\state) = \lifnal[V]_{\ifnal}(\gamble)$. Then we have that $h-\underline{h}\ge 0$. By Corollary~\ref{cor-max-quotient} we have that $\lifnal(h-\underline{h}) \ge m_{\ifnal}\lifnal[F](h-\underline{h})$. Hence, by constant additivity of $\lifnal$ and $\lifnal[F]$, $\lifnal(h)-\underline{h} \ge m_{\ifnal}\lifnal[F](h)-m_{\ifnal}\underline{h}$ holds. By replacing $\underline{h}$ with $\lifnal[V]_{\ifnal}(\gamble), \lifnal(h)$ with $\lifnal(\gamble)$ and $\lifnal[F](h)$ with $\lifnal[F](\gamble)$ we obtain 
\[ \lifnal(f)-\lifnal[V]_{\ifnal}(\gamble) \ge m_{\ifnal}\lifnal[F](\gamble)-m_{\ifnal}\lifnal[V]_{\ifnal}(\gamble), \]
whence inequality \eqref{ineq-vacuous-mixture} follows. 
\qed
\end{proof}
\begin{definition}\label{def-extremal}
	Let $\ifnal$ be an IEF such that $\uifnal(\chf{B})\in \{0, 1\}$ for every minimal permanent class $B$. Then we say that $\ifnal$ is an \emph{extremal} IEF. 
\end{definition}
In the precise case the concept of an extremal expectation functional is rather trivial. Note that a minimal permanent class in the precise case can only be absorbing. Therefore having the entire probability mass once concentrated in such a class it will remain there forever, and additionally, it can only be concentrated in one such class at the time.  Consequently, if the class is aperiodic, the probability distributions will then converge to a unique distribution invariant for the chain restricted to this, regular and absorbing class. 
\begin{definition}\label{def-SE}
	Let $\ifnal$ be an IEF. Then we define 
	\[ S_{\ifnal} = \{ \statey \colon \exists \statex \in \supp(E), \statex\acs\statey \} \]
	and $V_{\ifnal}$ will from now on denote the vacuous IEF on $S_{\ifnal}$. 
\end{definition}
Clearly, the following holds. 

\begin{proposition}\label{prop-least-committal}
	For every IEF $\ifnal$, the set $S_{\ifnal}$ is absorbing. Moreover, the sequence 
	\begin{equation*}\label{en-sequence-Mn}
		M_n = V_{\ifnal}\toper^n
	\end{equation*}
	is monotone: $M_{n+1}\subseteq M_n$, and the limit
	\begin{equation*}\label{en-M}
		M = \lim_{n\to\infty} M_n	
	\end{equation*}
	exists and is $\toper$-invariant. 
\end{proposition}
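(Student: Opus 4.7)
The proof should be almost entirely a reassembly of results already in the paper, with only the first assertion requiring a direct argument. Here is the plan.

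First, I would verify that $S_{\ifnal}$ is absorbing by pure unfolding of the definition. Suppose $\statey \in S_{\ifnal}$ and $\statey \acs \statez$; by definition of $S_{\ifnal}$ there is some $\statex \in \supp(\ifnal)$ with $\statex \acs \statey$. Since $\acs$ was established to be a preorder (reflexive and transitive), combining $\statex \acs \statey$ with $\statey \acs \statez$ yields $\statex \acs \statez$, so $\statez \in S_{\ifnal}$. Hence $S_{\ifnal}$ is absorbing.

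Second, since $V_{\ifnal}$ is, by Definition~\ref{def-SE}, exactly the vacuous IEF on the absorbing set $S_{\ifnal}$, Proposition~\ref{abs-monotone} applies with $\sset[C] = S_{\ifnal}$ and gives
\[
V_{\ifnal}\toper \subseteq V_{\ifnal}.
\]
This puts $V_{\ifnal}$ in the first of the two alternative hypotheses of Proposition~\ref{limit-invar}.

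Third, I would invoke Proposition~\ref{limit-invar} directly with $\ifnal_0 = V_{\ifnal}$. It yields that the sequence $M_n = V_{\ifnal}\toper^n$ is monotone (the required inclusion $M_{n+1}\subseteq M_n$ being precisely the monotonicity conclusion of that proposition), that the limit $M = \lim_{n\to\infty} M_n$ exists, and that $M$ is $\toper$-invariant. This closes the proof.

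There is really no obstacle here; the main thing to check carefully is the bookkeeping between the notation $V_{\ifnal}$ introduced in Definition~\ref{def-SE} and the notation $\ifnal[V]_{\sset[C]}$ used in Proposition~\ref{abs-monotone}, so that the chain of citations lines up. Once that identification is made, the three bullet points of the proposition are, respectively, a one-line transitivity argument and two applications of previously proved results.
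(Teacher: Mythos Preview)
Your proposal is correct and follows exactly the route the paper takes: the paper's proof simply says that $S_{\ifnal}$ being absorbing is ``an immediate consequence of the definition'' (your transitivity argument spells this out) and then cites Propositions~\ref{abs-monotone} and~\ref{limit-invar} for the rest, precisely as you do.
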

\begin{proof}
It is an immediate consequence of the definition that $S_{\ifnal}$ is absorbing. The convergence then follows from Propositions~\ref{limit-invar} and \ref{abs-monotone}.
\qed
\end{proof}
\begin{lemma}\label{lem-permanent-absorbing}
	Let $\sset$ be a permanent class such that $\sset \sacs[r]\sset$ and $S$ an absorbing set. Then $\sset\cap S\sacs[r]\sset\cap S$. 
\end{lemma}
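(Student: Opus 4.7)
The plan is to show directly that $\utoper^r \chf{A \cap S}(\state) = 1$ for every $\state \in A \cap S$, exploiting the two hypotheses: $A \sacs[r] A$ gives us $\utoper^r \chf{A}(\state) = 1$ on $A$, while $S$ being absorbing forces the upper probability of leaving $S$ in $r$ steps to be zero on $S$. The main task is to combine these two pieces of information via subadditivity.

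First I would fix $\state \in A \cap S$ and decompose $\chf{A} = \chf{A \cap S} + \chf{A \setminus S}$. Applying property (ii) (subadditivity) of upper expectations iteratively to $\utoper^r$ yields
\begin{equation*}
  1 = \utoper^r \chf{A}(\state) \le \utoper^r \chf{A \cap S}(\state) + \utoper^r \chf{A \setminus S}(\state).
\end{equation*}
Thus it remains to show $\utoper^r \chf{A \setminus S}(\state) = 0$, which by monotonicity follows from $\utoper^r \chf{\states \setminus S}(\state) = 0$.

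For this last claim I would argue that since $S$ is absorbing and $\state \in S$, for any $\statey \notin S$ we have $\state \not\acs \statey$, hence in particular $\utoper^r \chf{\{\statey\}}(\state) = 0$. Writing $\chf{\states \setminus S} = \sum_{\statey \notin S} \chf{\{\statey\}}$ and invoking subadditivity of $\utoper^r$ together with non-negativity (property (i)) gives
\begin{equation*}
  0 \le \utoper^r \chf{\states \setminus S}(\state) \le \sum_{\statey \notin S} \utoper^r \chf{\{\statey\}}(\state) = 0.
\end{equation*}
Plugging this back yields $\utoper^r \chf{A \cap S}(\state) \ge 1$, and boundedness (property (i)) gives the reverse inequality, so $\utoper^r \chf{A \cap S}(\state) = 1$ as required. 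Since $\state \in A \cap S$ was arbitrary, we conclude $A \cap S \sacs[r] A \cap S$.

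I do not expect any real obstacle here; the only thing to be careful about is that the absorbing property is stated in terms of the weak accessibility relation $\acs$ rather than directly in terms of a one-step transition, but since $\acs[n]$ for $n=r$ is subsumed under $\acs$, the implication $\utoper^r \chf{\{\statey\}}(\state) > 0 \Rightarrow \statey \in S$ for $\state \in S$ holds without further work.
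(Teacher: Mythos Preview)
Your proof is correct and follows essentially the same route as the paper: fix $\state\in A\cap S$, use subadditivity of $\utoper^r(\cdot\,|\,\state)$ on the decomposition $\chf{A}=\chf{A\cap S}+\chf{A\setminus S}$, and kill the second term using that $S$ is absorbing. The paper states the vanishing of $\utoper^r\chf{C}(\state)$ for $C\cap S=\emptyset$ and $\state\in S$ as an immediate consequence of absorbingness, whereas you spell it out via singletons and subadditivity; this is the only (minor) difference in presentation.
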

\begin{proof}
	Clearly, $S$ being absorbing implies that $\utoper^r \chf{C}(\state) = 0$ for $C\cap S= \emptyset$ and $\state\in S$. Now take some $\state\in \sset\cap S$. By subadditivity of $\utoper(\cdot|\state)$ we have that $1=\utoper^r \chf{\sset}(\state) \le \utoper^r \chf{\sset\cap S}(\state)+ \utoper^r \chf{\sset\backslash S}(\state)$; whence, by $\utoper^r \chf{\sset\backslash S}(\state)=0$, $\utoper^r \chf{\sset\cap S}(\state)=1$ follows. 
\qed
\end{proof}
\begin{corollary}\label{cor-mpc-contained}
	Let $B$ be a minimal permanent class and $S$ an absorbing set. Then $B\cap S\in \{B, \emptyset \}$. 
\end{corollary}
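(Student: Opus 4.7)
The plan is to combine Lemma~\ref{lem-permanent-absorbing} directly with the minimality hypothesis on $B$. Since $B$ is a minimal permanent class, by Definition of minimal permanent class there exists $r \in \NN$ with $B \sacs[r] B$. Because $S$ is absorbing, Lemma~\ref{lem-permanent-absorbing} applies and yields $B \cap S \sacs[r] B \cap S$, hence $B \cap S \sacs B \cap S$.

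Now split on whether $B \cap S$ is empty. If $B \cap S = \emptyset$, there is nothing further to prove. Otherwise $B \cap S$ is a non-empty subset of $B$ satisfying $(B \cap S) \sacs (B \cap S)$. Minimality of $B$ forbids any proper subset $B' \subsetneq B$ with $B' \sacs B'$, so $B \cap S$ cannot be a proper subset of $B$; it must equal $B$. Either way, $B \cap S \in \{B, \emptyset\}$.

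I expect no real obstacle: the corollary is a genuine one-line consequence of the preceding lemma coupled with the defining property of minimal permanent classes. The only mild point to be careful about is that the hypothesis of Lemma~\ref{lem-permanent-absorbing} requires a witnessing $r$ for $B \sacs B$, but this is provided directly by the definition of minimal permanent class.
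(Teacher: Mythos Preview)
Your argument is correct and is precisely the intended one: the paper states this corollary without proof, treating it as an immediate consequence of Lemma~\ref{lem-permanent-absorbing} together with the defining minimality of $B$, and your write-up makes that deduction explicit.
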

\begin{proposition}\label{prop-monotonicity-psi}
Let $A\sacs[n] B$ and let $\ifnal$ be an IEF. Then $\uifnal(\utoper^n\chf{B})\ge \uifnal(\chf{A})$.
\end{proposition}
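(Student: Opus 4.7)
The plan is to observe that the hypothesis $A \sacs[n] B$ is, by definition, exactly the pointwise statement $\utoper^n \chf{B}(\state) = 1$ for every $\state \in A$. Since $\utoper^n \chf{B}$ is bounded between $0$ and $1$ (this follows from boundedness of the upper transition operator applied to a $[0,1]$-valued gamble, extended inductively to $\utoper^n$), we have $\utoper^n \chf{B}(\state) \ge \chf{A}(\state)$ for every $\state \in \states$: on $A$ both sides equal $1$, and off $A$ the right-hand side is $0$ while the left-hand side is nonnegative.

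From this pointwise inequality of gambles, I would apply monotonicity of the upper expectation functional (property (v) in the list of basic properties of $\uifnal$) to conclude
\[ \uifnal(\utoper^n \chf{B}) \ge \uifnal(\chf{A}), \]
which is exactly the claim.

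There is essentially no obstacle here — the statement is a direct transcription of the definition of strong accessibility through the monotonicity of $\uifnal$. The only minor point worth flagging is to justify that $\utoper^n \chf{B}$ is indeed in $\allgambles_1$, so that the value $1$ on $A$ really does dominate $\chf{A}$ everywhere; this is immediate from the boundedness property of $\utoper$ together with an easy induction on $n$, using the recursive definition $\toper^{k+1} \gamble = \toper[\toper^k \gamble]$.
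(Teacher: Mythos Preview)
Your proof is correct and follows essentially the same route as the paper's one-line argument: both rest on the pointwise inequality $\utoper^n\chf{B}\ge\chf{A}$ (equivalently, $\utoper^n\chf{B}\ge\big(\min_{\state\in A}\utoper^n\chf{B}(\state)\big)\chf{A}$ with the minimum equal to $1$) together with monotonicity of $\uifnal$. The only cosmetic difference is that the paper writes the intermediate step as $\uifnal(\utoper^n\chf{B})\ge \uifnal(\chf{A})\cdot\min_{\state\in A}\utoper^n\chf{B}(\state)$, while you specialize immediately to $\chf{A}$.
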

\begin{proof}
We have that $\uifnal(\utoper^n\chf{B})\ge \uifnal(\chf{A})\cdot\min_{\state\in A}\utoper^n\chf{B}(\state) = \uifnal(\chf{A})$.
\qed
\end{proof}
\begin{corollary}
Let $A\sacs B$ and let $\ifnal$ be a $\toper$-invariant IEF. Then $\uifnal(\chf{B})\ge \uifnal(\chf{A})$.
\end{corollary}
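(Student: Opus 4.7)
The proof should be essentially immediate from the preceding Proposition~\ref{prop-monotonicity-psi} combined with the definition of $\toper$-invariance. The plan is as follows.

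First, unfold the hypothesis $A \sacs B$: by definition of the strong accessibility relation, there exists some positive integer $n$ such that $A \sacs[n] B$. Having fixed such an $n$, I can apply Proposition~\ref{prop-monotonicity-psi} to conclude that $\uifnal(\utoper^n \chf{B}) \ge \uifnal(\chf{A})$.

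Next, I invoke $\toper$-invariance. Since $\ifnal$ is $\toper$-invariant, it is also $\toper^n$-invariant (as noted right after the definition of invariance in the paragraph preceding Proposition~\ref{limit-invar}), which means $\ifnal \toper^n = \ifnal$ and hence in particular $\uifnal(\utoper^n \chf{B}) = \uifnal(\chf{B})$. Chaining this equality with the inequality from the previous step yields the desired $\uifnal(\chf{B}) \ge \uifnal(\chf{A})$.

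There is no real obstacle here: the entire content of the corollary is that Proposition~\ref{prop-monotonicity-psi}, which applies to arbitrary IEFs at a fixed step $n$, becomes a direct comparison between upper probabilities of $A$ and $B$ once the $n$-step propagation on the left-hand side can be absorbed by invariance. The only minor care needed is making sure we use $\toper^n$-invariance rather than just $\toper$-invariance, but that is a standard one-line consequence of iterating $\ifnal \toper = \ifnal$.
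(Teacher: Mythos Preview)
Your proof is correct and matches the paper's intended argument: the corollary is stated there without proof precisely because it follows immediately from Proposition~\ref{prop-monotonicity-psi} together with $\toper^n$-invariance of $\ifnal$, exactly as you wrote.
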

\begin{lemma}\label{lema-unique-psi}
	Let $\ifnal$ be an IEF such that $\ifnal \toper^n$ is extremal for every $n\ge 0$ and the limit $\lim_{n\to\infty}\psets[\ifnal\toper^n](B)$ exists for every minimal permanent class $B$. Then the limit
	\[ \psets[](A) := \lim_{n\to\infty}\psets[\ifnal\toper^n](A) \]
	exists for every $A\subseteq \states$ and $\psets[]$ is uniquely determined by its restriction to minimal permanent classes. 
\end{lemma}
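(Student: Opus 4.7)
Plan. Let $\mathcal M_1$ (resp. $\mathcal M_0$) denote the set of minimal permanent classes $\sset[B]$ with $\lim_n \psets[{\ifnal\toper^n}](\sset[B])=1$ (resp. $=0$), so that by hypothesis $\mathcal M_1$ and $\mathcal M_0$ partition the MPCs. Introduce the candidate limit $\psi^*(\sset):=1$ if some $\sset[B]\in\mathcal M_1$ satisfies $\sset[B]\sacs\sset$, and $\psi^*(\sset):=0$ otherwise. My plan is to prove $\lim_n\psets[{\ifnal\toper^n}](\sset)=\psi^*(\sset)$ for every $\sset\subseteq\states$; since $\psi^*$ is defined purely in terms of the set $\mathcal M_1$, this will deliver both existence of the limit and its uniqueness in terms of MPC values.

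The lower bound is immediate from Proposition~\ref{psets-acsets-multi}\,(ii)--(iv): if $\sset[B]\in\mathcal M_1$ with $\sset[B]\sacs[k]\sset$ and $N$ is chosen so $\psets[{\ifnal\toper^n}](\sset[B])=1$ for all $n\ge N$, then
\[ \psets[{\ifnal\toper^{n+k}}](\sset) = (\psets[{\ifnal\toper^n}]*\acsets^k)(\sset)\ge \psets[{\ifnal\toper^n}](\sset[B])\cdot \acsets^k(\sset[B],\sset)=1 \]
for every $n\ge N$, so $\psets[{\ifnal\toper^m}](\sset)=1$ eventually whenever $\psi^*(\sset)=1$.

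For the upper bound I would argue by contradiction: assume $\psi^*(\sset)=0$ yet $\psets[{\ifnal\toper^{n}}](\sset)=1$ for arbitrarily large $n$. Unfolding $\psets[{\ifnal\toper^n}]=\psets[\ifnal]*\acsets^n$ via Proposition~\ref{psets-acsets-multi} and applying Corollary~\ref{cor-sequence}, each such $n$ yields a chain $\sset[C]_0\sacs[1]\sset[C]_1\sacs[1]\cdots\sacs[1]\sset[C]_n=\sset$ with $\psets[\ifnal](\sset[C]_0)=1$ and $\psets[{\ifnal\toper^k}](\sset[C]_k)=1$ at every intermediate step. Pigeonhole in the finite powerset $2^{\states}$ then supplies a set $\sset[D]$ visited twice in a sufficiently long chain: this gives $\sset[D]\sacs\sset[D]$ and $\sset[D]\sacs\sset$, and moreover $\psets[{\ifnal\toper^m}](\sset[D])=1$ along an arithmetic progression of arbitrarily large $m$. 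By Proposition~\ref{prop-pc-equivalent}, $\sset[D]$ is a permanent class and some MPC $\sset[B]_0$ with $\sset[B]_0\sacs\sset[D]\sacs\sset$ exists. If $\sset[B]_0\in\mathcal M_1$ we immediately contradict $\psi^*(\sset)=0$, so we may assume that every MPC leading into $\sset[D]$ lies in $\mathcal M_0$.

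The main obstacle is deriving the contradiction in this remaining case, and it is here that extremality does the essential work. For every $\sset[B]\in\mathcal M_0$ the stabilization $\psets[{\ifnal\toper^m}](\sset[B])=0$ is upgraded by extremality of $\ifnal\toper^m$ to $\uifnal[{\ifnal\toper^m}](\chf{\sset[B]})=0$ for large $m$, so every expectation functional in $\ifnal\toper^m$ vanishes on $\sset[B]$. Subadditivity of upper expectations then promotes $\psets[{\ifnal\toper^m}](\sset[D])=1$ into $\psets[{\ifnal\toper^m}](\sset[D]\setminus\bigcup\mathcal M_0)=1$ for large $m$ in the arithmetic progression. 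Combining this with the recurrence $\sset[D]\sacs\sset[D]$ and invoking Corollary~\ref{cor-mpc-contained} to locate a minimal permanent subclass inside $\sset[D]\setminus\bigcup\mathcal M_0$, one should produce an MPC in $\mathcal M_1$ contained in $\sset[D]$, which then strongly leads to $\sset$ and contradicts $\psi^*(\sset)=0$. This last extraction of an MPC in $\mathcal M_1$ from the upper-probability concentration on $\sset[D]\setminus\bigcup\mathcal M_0$ is the delicate step of the proof---it is precisely where extremality, Corollary~\ref{cor-mpc-contained}, and the combinatorial $\sacs$-structure of minimal permanent classes must be reconciled.
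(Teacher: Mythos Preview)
Your overall architecture matches the paper's: unfold $\psets[{\ifnal\toper^n}]=\psets[\ifnal]*\acsets^n$, pigeonhole a long $\sacs$-chain to find a set $D$ with $D\sacs D\sacs A$, then extract from $D$ a minimal permanent class in $\mathcal M_1$ strongly leading to $A$. The lower bound and the pigeonhole step are fine.

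The gap is precisely the ``delicate step'' you flag, and your proposed method for it does not work. Corollary~\ref{cor-mpc-contained} applies only when the second set is \emph{absorbing}; the set $D\setminus\bigcup\mathcal M_0$ has no reason to be absorbing, so you cannot invoke it there. Worse, in your ``remaining case'' every MPC contained in $D$ lies in $\mathcal M_0$, so $D\setminus\bigcup\mathcal M_0$ contains \emph{no} MPC at all, and the hoped-for extraction is impossible by that route. Iterating the whole argument with $D\setminus\bigcup\mathcal M_0$ in place of $A$ does not obviously terminate either.

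The paper closes the gap with a different, cleaner idea that you are missing: it first proves (using extremality in the forward rather than the backward direction) that the MPCs in $\mathcal M_1$ are exactly those contained in the absorbing set $S_{\ifnal}$ of Definition~\ref{def-SE}. Since $\psets[{\ifnal\toper^m}](D)=1$ forces $D\cap S_{\ifnal}\neq\emptyset$, one intersects $D$ with $S_{\ifnal}$ rather than subtracting $\bigcup\mathcal M_0$. Lemma~\ref{lem-permanent-absorbing} (which \emph{does} apply, because $S_{\ifnal}$ is absorbing) gives $D\cap S_{\ifnal}\sacs D\cap S_{\ifnal}$, so this intersection contains an MPC $B\subseteq S_{\ifnal}$; by the characterization just proved, $B\in\mathcal M_1$, and $B\sacs D\sacs A$ yields the contradiction. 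In particular your ``remaining case'' is vacuous once the $S_{\ifnal}$-characterization of $\mathcal M_1$ is in hand, and that characterization is where extremality is actually needed.
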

\begin{proof}
	Denote $\ifnal_n = \ifnal\toper^n$. Let us fist show that for every minimal permanent class $B$ $\lim_{n\to\infty}\psets[\ifnal_n](B)=1$ if and only if $B\subseteq S_{\ifnal}$. To see this, take some minimal permanent class $B\subseteq S_{\ifnal}$ and note that $\uifnal_k(\chf{B})>0$ must hold by definition of $S_{\ifnal}$ for some $k\ge 0$. Extremality of $\ifnal_k$ implies that $\uifnal_k(\chf{B})=1$, and the fact that $B\sacs B$, say $B\sacs[r] B$, by Proposition~\ref{prop-monotonicity-psi}, implies that $\uifnal_{k+mr}(\chf{B})=1$, for every $m\in \NN$, whence, by the assumed convergence of the sequence $\{ \psets[\ifnal_n](B)\}$, we must have that, for some $N\in \NN,$ $\psets[\ifnal_n](B)=1$ for every $n\ge N$. 
	
On the other hand, if $B\not\subseteq S_{\ifnal}$, then by Corollary~\ref{cor-mpc-contained}, $B\cap S_{\ifnal}=\emptyset$, and therefore clearly, $\uifnal_n(\chf{B})=0$ for every $n\ge 0$. 	
	
	 Now let $A\subseteq \states$. Then either $\psets[{\ifnal_n}](A) = 0$ for all sufficiently large $n$ or for every $N\in\NN$ there exists an $n\ge N$ such that $\psets[{\ifnal_n}](A) = 1$. The former case implies convergence, so let us show that the latter case does as well. Suppose that $\psets[{\ifnal_n}](A)=1$. By Proposition~\ref{psets-acsets-multi}\,(iv) we have that $\psets[{\ifnal_n}](A) = (\psets * \acsets^n)(A)=1$, which implies the existence of a sequence $A_0\sacs  A_1\sacs \ldots \sacs A_n = A$, such that $\psets(A_0) = 1$. If $n$ is sufficiently large, then at least two members of the sequence are equal, say $A_k = A_l = A'$. Hence, we have $A'\sacs A'\sacs A$. By Lemma~\ref{lem-permanent-absorbing} we have that $B' = A'\cap S_{\ifnal}$ also satisfies $B'\sacs B'$, and moreover $B'\sacs A'\sacs A$. However, $B'$ either is a minimal permanent class or contains one. Let $B$ denote such a minimal permanent class, which by construction lies within $S_{\ifnal}$, and therefore, by the assumptions, an $N\in\NN$ exists such that  $\psets[\ifnal_n](B)=1$ for every $n\ge N$. 	
	We have that $B\sacs A$, say $B\sacs[r] A$. Moreover, by Proposition~\ref{prop-monotonicity-psi}  we have that $\psets[{\ifnal_{r+n}}](A) \ge \psets[{\ifnal_n}](B)=1$ for every $n\ge N$. Hence, $\psets[{\ifnal_{k}}](A)=1$ for every $k\ge r+N$.
\qed
\end{proof}

\begin{theorem}\label{thm-main-1}
	Let $\ifnal$ be an IEF such that $\ifnal_n = \ifnal \toper^n$ is extremal for every $n\ge 0$ and the limit $\lim_{n\to\infty}\psets[\ifnal_n](B)$ exists for every minimal permanent class $B$. Then the limit 
	\[ \ifnal_\infty := \lim_{n\to\infty}\ifnal_n \]
	exists. Moreover, $\lim_{n\to\infty}\ifnal_n =\lim_{n\to\infty}\ifnal[F]_n$ if and only if $S_{\ifnal}=S_{\ifnal[F]}$. 
\end{theorem}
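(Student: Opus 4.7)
The plan is to identify the candidate limit as $M := \lim_{n\to\infty} M_n$, where $M_n := V_{\ifnal}\toper^n$, and then show that both $\{M_n\}$ and $\{\ifnal_n\}$ converge to the same $\toper$-invariant IEF. Since $S_{\ifnal}$ is absorbing (Proposition~\ref{prop-least-committal}), Proposition~\ref{abs-monotone} gives $V_{\ifnal}\toper \subseteq V_{\ifnal}$, so by Proposition~\ref{limit-invar} the sequence $\{M_n\}$ is monotone decreasing and converges to a $\toper$-invariant $M$. The inclusion $\supp(\ifnal) \subseteq S_{\ifnal}$ yields $\ifnal \subseteq V_{\ifnal}$, hence $\ifnal_n \subseteq M_n$ for every $n$, so every subsequential limit $L$ of $\{\ifnal_n\}$ satisfies $L \subseteq M$, and in particular $\supp(\ifnal_n)\subseteq\supp(M_n)$.

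Next I would match the set functions $\psets$. By Lemma~\ref{lema-unique-psi} applied to $\ifnal$, the sequence $\psets[\ifnal_n]$ converges to a limit $\psets_{\infty}$ whose restriction to minimal permanent classes $B$ is $\mathbf{1}\{B\subseteq S_{\ifnal}\}$. A parallel analysis for $M_n$---exploiting the monotonicity $M_{n+1}\subseteq M_n$, which makes $\{\psets[M_n]\}$ a non-increasing $\{0,1\}$-valued sequence, together with the regularity of $\toper^r$ on $B$ granted by Proposition~\ref{prop-mpc-com}---shows that $\psets[M_n]$ also stabilizes to the same $\psets_{\infty}$, since $S_{V_{\ifnal}}=S_{\ifnal}$. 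Thus for all $n\ge N$ one has $\psets[\ifnal_n]=\psets[M_n]$ and $\supp(\ifnal_n)\subseteq\supp(M_n)$, which are exactly the hypotheses of Corollary~\ref{cor-vacuous-mixture} with the roles $\ifnal\leftarrow M_n$ and $\ifnal[F]\leftarrow\ifnal_n$, giving
\[
M_n \;\subseteq\; m_{M_n}\ifnal_n \,+\, (1-m_{M_n})V_{M_n},
\]
where $V_{M_n}$ is the vacuous IEF on $\supp(M_n)$.

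By Proposition~\ref{p_es_m}(iv) applied to the monotone sequence $\{M_n\}$, the constants $m_{M_n}$ eventually form a non-decreasing sequence with some limit $m_{\ast}\in(0,1]$ (positivity follows from $m_{V_{\ifnal}}=1$), and $V_{M_n}$ stabilizes to a vacuous $V_{\ast}$. Passing to any subsequential limit $L$ of $\{\ifnal_n\}$ (existence by compactness) one obtains both $L\subseteq M$ and $M\subseteq m_{\ast}L+(1-m_{\ast})V_{\ast}$, with $L$ again $\toper$-invariant (from $\ifnal_{n+1}=\ifnal_n\toper$). Iterating the mixture bound on the invariant $L$ against $M$, using that both share $\psets_{\infty}$ and support $\supp(M)$, collapses the residual vacuous term and forces $L=M$. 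Since this is independent of the subsequence, $\ifnal_n\to M$.

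Finally, for the characterization: the "if" direction is immediate, since $S_{\ifnal}=S_{\ifnal[F]}$ gives $V_{\ifnal}=V_{\ifnal[F]}$, hence identical $M_n$-sequences and identical limits. For "only if", the limit $M$ determines, via $\supp(M)$ and $\toper$-invariance together with Proposition~\ref{invariant-absorbing}, exactly which minimal permanent classes carry the mass, and the absorbing-set generation of Proposition~\ref{prop-least-committal} then recovers $S_{\ifnal}$ from $\lim_n\ifnal_n$. I expect the main obstacle to be the collapse step in the previous paragraph: ensuring that the mixture bound from Corollary~\ref{cor-vacuous-mixture} forces $L=M$ rather than leaving a strict gap proportional to $1-m_{\ast}$; this is precisely where the extremality hypothesis on each $\ifnal_n$ must be used, since it is what prevents $\ifnal_n$ from drifting to a proper invariant sub-IEF of $M$ concentrated in a strict union of minimal permanent classes.
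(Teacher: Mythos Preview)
Your overall strategy matches the paper's: identify the limit as $M=\lim_n V_{\ifnal}\toper^n$, show that $\psets[\ifnal_n]$ and $\psets[M_n]$ eventually coincide, and then use Corollary~\ref{cor-vacuous-mixture} to sandwich $\ifnal_n$ against $M_n$. The genuine gap is your ``collapse'' step. After passing to a subsequential limit $L$ you are left with $L\subseteq M$ and $M\subseteq m_\ast L+(1-m_\ast)V_\ast$, where $V_\ast$ is vacuous on $\supp(M)$, and you propose to ``iterate the mixture bound'' to eliminate the residual $(1-m_\ast)V_\ast$. As written this does not close: iterating Corollary~\ref{cor-vacuous-mixture} between two \emph{fixed} IEFs $L$ and $M$ with the same $\psets$ simply reproduces the same inequality with the same residual vacuous term, and extremality of the $\ifnal_n$ gives you nothing further once you have passed to the limit. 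You yourself flag this as the main obstacle, and it is real.

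The paper sidesteps the obstacle by a small but decisive change in how Corollary~\ref{cor-vacuous-mixture} is applied. At a single fixed time $N$ (large enough that $\psets[\ifnal_N]=\psets[M_N]$), it replaces the vacuous IEF on $\supp(M_N)$ by the larger vacuous IEF $V_{\ifnal}$ on all of $S_{\ifnal}$. This only weakens the conclusion of the corollary (since $\lifnal[V]_{\ifnal}\le \lifnal[V]_{M_N}$), yielding
\[
M_N \;\subseteq\; m\,\ifnal_N + (1-m)\,V_{\ifnal},\qquad m:=m_{M_N}>0.
\]
Now apply $\toper^n$ to both sides via Proposition~\ref{convex-ief}: the three terms become $M_{N+n}$, $\ifnal_{N+n}$, and $V_{\ifnal}\toper^n=M_n$. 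Letting $n\to\infty$ gives, for every gamble $\gamble$,
\[
\uifnal[M](\gamble)\;\le\; m\,\liminf_{n\to\infty}\uifnal_n(\gamble)+(1-m)\,\uifnal[M](\gamble),
\]
hence $\uifnal[M](\gamble)\le\liminf_n\uifnal_n(\gamble)$; combined with $\ifnal_n\subseteq M_n$ this forces $\lim_n\uifnal_n(\gamble)=\uifnal[M](\gamble)$. No subsequences, no iteration, no separate compactness argument. The key point you are missing is to push the vacuous term out to the fixed absorbing set $S_{\ifnal}$, so that it too flows to $M$ under $\toper^n$; using $V_{M_n}$ on the moving set $\supp(M_n)$ destroys exactly this.
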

\begin{proof}
	By Lemma~\ref{lema-unique-psi} and the assumptions of the theorem, the limit $\psets[] = \lim_{n\to\infty}\psets[\ifnal_n]$ exists, and since $\psets[\ifnal_n]$ are discrete valued, there is some $N_1\in \NN$ such that $\psets[\ifnal_n] = \psets[]$ for every $n\ge N_1$. 	
	
It has been shown in the proof of Lemma~\ref{lema-unique-psi} that $\psets[](B)=1$ for exactly those minimal permanent classes $B$ that are contained in $S_{\ifnal}$. 
	
	Let $V_{\ifnal}$ be the vacuous IEF on $S_{\ifnal}$ and denote $M_n = V_{\ifnal}\toper^n$ and $M=\lim_{n\to\infty}M_n$, which exists by Proposition~\ref{prop-least-committal}. Similar arguments as above show that for every minimal permanent class $B$ we have that $\psets[M](B) = 1$ if and only if $B\subseteq S_{\ifnal}$, whence $\psets[M] = \psets[]$. Moreover, by Proposition~\ref{psets-acsets-multi}\,(v), $\psets[M] = \lim_{n\to\infty}\psets[M_n]$, and therefore, there is some $N_2\in\NN$ such that $\psets[M_n] = \psets[M] = \psets[]$ for every $n\ge N_2$. 
	
	Because of $\ifnal \subseteq V_{\ifnal}$, we also have that $\supp(\ifnal_n) \subseteq \supp(M_n)$ for every $n\in \NN$. Now let $N=\max\{N_1, N_2 \}$ and denote $m = m_{M_N}$ (c.f. Definition~\ref{def-essmax-me}). By Corollary~\ref{cor-vacuous-mixture} we then have that 
	\begin{equation*}\label{eq-thm2-1}
		M_N \subseteq m \ifnal_N + (1-m)V_E.
	\end{equation*}
	Now take any gamble $\gamble\in \allgambles$ and any $n\ge 0$. By Proposition~\ref{convex-ief} we obtain
	\begin{equation*}\label{eq-thm2-2}
		\uifnal[M]_N(\utoper^n f) \le m \uifnal_N(\utoper^n f) + (1-m)\uifnal[V]_E(\utoper^n f).
	\end{equation*}
	Letting $n\to\infty$ in the above equation we obtain
	\begin{equation*}\label{eq-thm2-3}
		\uifnal[M](f) \le m \liminf_{n\to\infty}\uifnal_n(f) + (1-m)\uifnal[M](f), 
	\end{equation*}
	implying that $\uifnal[M](f) \le \liminf_{n\to\infty}\uifnal_n(f)$. But since, clearly, $\uifnal[M]_n(f)\ge \uifnal_n(f)$ for every $n\in \NN$, this is only possible if the limit 
	\[ \uifnal_\infty(f) = \lim_{n\to\infty}\uifnal_n(f) \]
	exists and is equal to $\uifnal[M](f)$. Since this holds for every gamble $\gamble$, we have that $\ifnal_\infty = \ifnal[M]$. Now since $\ifnal[M]$ is uniquely determined by $S_{\ifnal}$, the limit is the same for all IEFs $\ifnal$ that have the same $S_{\ifnal}$. 
\qed
\end{proof}

\subsection{Unique convergence to extremal invariant IEFs}\label{ss-impc}
In the previous subsection we have shown that extremal IEFs converge to a unique invariant IEF that is uniquely determined with the behaviour of the corresponding $\psets$ restricted to minimal permanent classes. But often the probability mass is not initially entirely concentrated in a minimal permanent class, but rather it accumulates there as time goes to infinity. It may then happen that in the limit the upper probabilities of all minimal permanent classes are either $0$ or $1$, although in any finite time they can be somewhere between. In this subsection we show that a convergence to an extremal IEF is unique whenever the upper expectations on minimal permanent classes converge to either $0$ or $1$. Moreover the limit IEFs are still uniquely determined with the behaviour on the minimal permanent class structure. 
\begin{proposition}\label{prop-absorbing-indicator}
	Let $\sset$ be an absorbing set. Then $\chf{\sset}\cdot \toper^n \gamble = \chf{\sset}\cdot \toper^n[\chf{\sset}\cdot\gamble]$ for every $n\in\NN$. 
\end{proposition}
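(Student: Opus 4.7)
The plan is to proceed by induction on $n$, with the base case $n=1$ being a direct application of Proposition~\ref{gamble-support}, and the inductive step repeatedly exchanging $\chf{A}$ across the operator by means of the same proposition together with the inductive hypothesis.

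For the base case, observe that if $\state\in \sset$ and $\sset$ is absorbing, then every $\statey$ with $\utoper\chf{\{\statey\}}(\state)>0$ satisfies $\state \acs \statey$, hence $\statey\in \sset$. Therefore $\supp(\toper(\cdot|\state))\subseteq \sset$ for every $\state\in \sset$, and Proposition~\ref{gamble-support} yields $\toper\gamble(\state) = \toper(\chf{\sset}\gamble)(\state)$ as intervals for such $\state$. Multiplying by $\chf{\sset}(\state)$ (which is $0$ off $\sset$) gives the case $n=1$, both for $\ltoper\,\gamble$ and $\utoper\,\gamble$.

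For the inductive step I would argue separately on $\chf{\sset}\utoper^n\gamble$ and $\chf{\sset}\ltoper^n\gamble$ since $\toper^n\gamble$ is by definition the interval $[\ltoper^n\gamble,\utoper^n\gamble]$. Fix any $\state\in \sset$. By the base case applied to the gamble $\utoper^{n-1}\gamble$, we have $\utoper(\utoper^{n-1}\gamble)(\state) = \utoper(\chf{\sset}\cdot\utoper^{n-1}\gamble)(\state)$. The inductive hypothesis gives $\chf{\sset}\cdot\utoper^{n-1}\gamble = \chf{\sset}\cdot\utoper^{n-1}(\chf{\sset}\gamble)$, so I may replace the argument, obtaining $\utoper(\chf{\sset}\cdot\utoper^{n-1}(\chf{\sset}\gamble))(\state)$. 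Finally, the base case applied in the reverse direction to the gamble $\utoper^{n-1}(\chf{\sset}\gamble)$ removes the indicator again and yields $\utoper^n(\chf{\sset}\gamble)(\state)$. The same chain of equalities works with $\ltoper$ throughout, so multiplying by $\chf{\sset}$ and noting that both sides vanish off $\sset$ completes the induction.

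There is essentially no obstacle here; the only point needing care is that $\toper^n\gamble$ must be interpreted as the interval $[\ltoper^n\gamble,\utoper^n\gamble]$ so that the asserted equality is an equality of sets of gambles, which reduces to the two scalar identities for $\utoper^n$ and $\ltoper^n$. Everything else is a mechanical double application of Proposition~\ref{gamble-support} (first to move $\chf{\sset}$ inside, then to invoke induction, then to move $\chf{\sset}$ back out), using absorbency of $\sset$ only at the innermost step.
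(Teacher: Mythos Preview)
Your proof is correct and follows essentially the same line as the paper: both establish the case $n=1$ from the fact that $\supp(\toper(\cdot|\state))\subseteq \sset$ for every $\state\in \sset$, which gives $\chf{\sset}\toper\gamble = \chf{\sset}\toper[\chf{\sset}\gamble]$. The only difference is that where you extend to general $n$ by explicit induction (moving $\chf{\sset}$ in, invoking the hypothesis, moving it back out), the paper shortcuts this by observing that $\sset$ absorbing for $\toper$ implies $\sset$ absorbing for $\toper^n$, so the $n=1$ identity applies verbatim with $\toper$ replaced by $\toper^n$.
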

\begin{proof}
	The fact that $\sset$ is absorbing implies that $\toper\chf{\sset^c}(\state) = 0$ for every $\state\in \sset$, and consequently, $\toper\gamble(\state) = \toper{[\chf{\sset}\gamble]}(\state)$ for every $\state\in\sset$. Hence
\begin{equation}\label{eq-gamble-times-indicator}
\chf{\sset}\toper\gamble = \chf{\sset}\toper[T]{[\chf{\sset}\gamble]}. 
\end{equation} 
But if $\sset$ is absorbing for $\toper$, then it is also absorbing for every $\toper^n$, where $n\in\NN$, and therefore the equation \eqref{eq-gamble-times-indicator} holds when $ \toper$ is replaced with $\toper^n$. 
\qed
\end{proof}
\begin{proposition}\label{prop-regular-convergence}
	Let $A$ be an absorbing set such that the weak accessibility relation induced by $\toper$ and restricted to $A$ is regular, and $\ifnal$ an IEF such that 
	\begin{equation}\label{eq-lower-one}
		\supp(\ifnal)\subseteq A.
	\end{equation}
	Then 
	\begin{equation*}
		\ifnal[M]_A := \lim_{n\to\infty} \ifnal \toper^n
	\end{equation*}
 exists and is the same for every $\ifnal$ satisfying \eqref{eq-lower-one}. 
\end{proposition}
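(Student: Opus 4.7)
The plan is to reduce the claim to Theorem~\ref{conv-decooman} applied to an appropriate restriction of the chain to $A$. Because $A$ is absorbing, Proposition~\ref{prop-absorbing-indicator} implies that for every gamble $\gamble$ on $\states$ the interval $\toper\gamble(\state)$ for $\state\in A$ depends only on $\gamble|_A$. This lets me define an ITO $\toper_A$ on the state space $A$, whose separately specified rows are inherited from $\toper$ and for which
\[
    (\toper^n[\chf{A}\cdot \gamble])|_A = \toper_A^n (\gamble|_A)
\]
for every $n\in\NN$ and every gamble $\gamble\in\allgambles(\states)$, by a straightforward induction invoking Proposition~\ref{prop-absorbing-indicator}.

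Next, I would transfer the sequence $\ifnal\toper^n$ to the restricted chain. Whenever $\supp(\ifnal)\subseteq A$, Proposition~\ref{gamble-support} gives $\ifnal(\toper^n \gamble) = \ifnal(\chf{A}\cdot \toper^n \gamble)$, and a further application of Proposition~\ref{prop-absorbing-indicator} yields $\ifnal(\chf{A}\cdot \toper^n[\chf{A}\cdot \gamble])$, which in turn equals $\ifnal|_A(\toper_A^n(\gamble|_A))$ for the IEF $\ifnal|_A$ on $A$ induced by $\ifnal$. Hence the iteration of the original chain on arbitrary gambles, starting from any $\ifnal$ with $\supp(\ifnal)\subseteq A$, coincides with the iteration of the restricted chain on $A$.

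The crux is to verify that the restricted chain is \emph{regularly absorbing} so that Theorem~\ref{conv-decooman} applies to it. Since the weak accessibility relation induced by $\toper$ and restricted to $A$ is regular by hypothesis, every pair of elements of $A$ communicates and $A$ forms a single communication class under $\toper_A$, which is therefore its (unique) top class. Condition \eqref{reg-abs} then holds vacuously because the complement of the top class within the restricted state space is empty, so the chain is top class regular and regularly absorbing.

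Applying Theorem~\ref{conv-decooman} to $\toper_A$ yields a unique $\utoper_A$-invariant upper expectation $\uifnal_\infty^A$ on $\allgambles(A)$ towards which every initial upper expectation converges point-wise. Pulling back through the identification above gives $\uifnal(\utoper^n \gamble) \to \uifnal_\infty^A(\gamble|_A)$ for every $\gamble\in\allgambles(\states)$, and $\lifnal(\cdot) = -\uifnal(-\cdot)$ supplies the corresponding statement for lower expectations. Hence $\ifnal\toper^n$ converges to a common IEF $\ifnal[M]_A$, independent of $\ifnal$ as long as $\supp(\ifnal)\subseteq A$. The main obstacle is essentially bookkeeping: constructing $\toper_A$ and verifying that the original and the restricted iterations agree on gambles of the form $\chf{A}\cdot \gamble$; once this correspondence is established the conclusion is a direct application of Theorem~\ref{conv-decooman}.
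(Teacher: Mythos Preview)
Your proposal is correct and follows essentially the same route as the paper: both reduce to Theorem~\ref{conv-decooman} by passing to an auxiliary chain for which $A$ is a regular top class, and then use Propositions~\ref{gamble-support} and~\ref{prop-absorbing-indicator} to identify the original iterates $\ifnal\toper^n$ with those of the auxiliary chain. The only cosmetic difference is that the paper defines $\toper_A$ on the full state space $\states$ (setting $\toper_A\gamble(\state)=\ifnal[V]_A\gamble$ for $\state\notin A$) rather than restricting the state space to $A$ as you do; either choice makes the auxiliary chain regularly absorbing and the remaining bookkeeping is the same.
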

\begin{proof}
	Let us first define the following transition operator:
	\[ \toper_A\gamble(\state) = \begin{cases}
		\toper \gamble(\state) & \state\in A; \\
		\ifnal[V]_A \gamble & \state \not\in A, 
	\end{cases}
	\]	
	where $\ifnal[V]_A$ is the vacuous IEF on $A$. Clearly, the chain with the ITO $\toper_A$ is regularly absorbing with the regular top class $A$. Therefore, by Theorem~\ref{conv-decooman}, the sequence $\{ \ifnal \toper_A^n \}$ converges uniquely to some $\ifnal[M]_A$. 
	Furthermore, Proposition~\ref{prop-absorbing-indicator} and the definition of $\toper_A$ imply that $\chf{A}\toper_A^n \gamble =\chf{A}\toper_A^n [\chf{A}\gamble] =\chf{A}\toper^n [\chf{A}\gamble]= \chf{A}\toper^n \gamble$ for every gamble $\gamble$. Using Proposition~\ref{gamble-support} and the above equalities we obtain
	\begin{align*} 
		\ifnal(\toper^n\gamble)  =  \ifnal (\chf{A}\toper^n\gamble) 
		 = \ifnal(\chf{A}\toper_A^nf) 
		 = \ifnal(\toper_A^nf),
	\end{align*}	
	and since the right hand sides converge, so do the left ones, for every $\gamble$. 
\qed
\end{proof}

\begin{corollary}\label{cor-minimalIEF}
	Let $A$ be a set of states and $\toper'\subseteq \toper$ an ITO such that $A$ is absorbing and regular with respect to the weak accessibility relation induced by $\toper'$. Then there exists a $\toper$-invariant IEF $\ifnal[M]_A$ such that for every IEF $\ifnal$ with $\uifnal(\chf{A})= 1$ we have that 
	\begin{equation}\label{eq-minimalIEF}
	\liminf_{n\to\infty}\uifnal(\utoper^n\gamble)\ge \uifnal[M]_A(\gamble)
	\end{equation}	 
	for every gamble $\gamble$. 
\end{corollary}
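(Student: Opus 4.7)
The plan is to construct $\ifnal[M]_A$ in two stages and then obtain the $\liminf$ bound by sandwiching $\ifnal$ from inside by an auxiliary sub-IEF whose support lies in $A$.

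First, Proposition~\ref{prop-regular-convergence} applied to $\toper'$ (whose top class $A$ is absorbing and regular by hypothesis), started from the vacuous IEF on $A$, produces a $\toper'$-invariant IEF
\[ \ifnal[N]_A := \lim_{n\to\infty}\ifnal[V]_A\,(\toper')^n, \]
which, by the same proposition, coincides with $\lim_n \ifnal(\toper')^n$ for \emph{every} IEF $\ifnal$ with $\supp(\ifnal)\subseteq A$. Because $\toper'\subseteq \toper$, we have $\ifnal[N]_A\toper \supseteq \ifnal[N]_A\toper' = \ifnal[N]_A$, so Proposition~\ref{limit-invar} furnishes a $\toper$-invariant IEF
\[ \ifnal[M]_A := \lim_{k\to\infty}\ifnal[N]_A\toper^k, \]
and monotone ($\subseteq$-increasing) convergence of the sequence gives $\uifnal[M]_A(\gamble) = \sup_{k\ge 0}\uifnal[N]_A(\utoper^k\gamble)$ for every gamble $\gamble$.

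Next, let $\ifnal$ satisfy $\uifnal(\chf{A})=1$. Compactness of $\ifnal$ provides some $P\in \ifnal$ with $P(\chf{A})=1$, so the closed convex sub-IEF
\[ \ifnal_A := \{P\in \ifnal\colon P(\chf{A})=1\} \]
is non-empty and satisfies $\supp(\ifnal_A)\subseteq A$. By the first paragraph, $\uifnal_A((\utoper')^m\gamble[h]) \to \uifnal[N]_A(\gamble[h])$ as $m\to\infty$ for every gamble $\gamble[h]$.

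Finally, the inclusion $\toper'\subseteq \toper$ gives $(\utoper')^m\gamble[h] \le \utoper^m\gamble[h]$ for every gamble and every $m$ (straightforward induction using monotonicity of $\utoper$), while $\ifnal_A\subseteq \ifnal$ gives $\uifnal_A\le \uifnal$. Hence, for any gamble $\gamble$ and any fixed $k\ge 0$,
\[ \uifnal(\utoper^{m+k}\gamble) \ge \uifnal_A(\utoper^m(\utoper^k\gamble)) \ge \uifnal_A((\utoper')^m(\utoper^k\gamble)), \]
and letting $m\to\infty$ yields $\liminf_{n\to\infty}\uifnal(\utoper^n\gamble)\ge \uifnal[N]_A(\utoper^k\gamble)$. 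Taking the supremum over $k$ and invoking the identity from the first paragraph produces \eqref{eq-minimalIEF}. The principal subtlety is that $\uifnal(\chf{A})=1$ does \emph{not} imply $\supp(\ifnal)\subseteq A$, which forces the intermediate passage to $\ifnal_A$; and since the IEF delivered by Proposition~\ref{prop-regular-convergence} is only $\toper'$-invariant, the extra iteration under $\toper$ is indispensable to obtain a $\toper$-invariant limit.
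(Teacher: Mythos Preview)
Your proof is correct and follows essentially the same route as the paper's: pick a sub-IEF of $E$ supported in $A$, apply Proposition~\ref{prop-regular-convergence} to $T'$ to obtain the $T'$-invariant limit, push that limit forward under $T$ via Proposition~\ref{limit-invar} to get the $T$-invariant $M_A$, and then compare using $E_A\subseteq E$ and $T'\subseteq T$. The only stylistic difference is that the paper obtains the final bound through a uniform-$\varepsilon$ argument on the compact set $\mathcal L_1$, whereas you use the monotonicity $(\overline{T'})^m\le \overline{T}^m$ directly and then take $\sup_k \overline{N}_A(\overline{T}^k f)=\overline{M}_A(f)$; your version is slightly cleaner but not genuinely different.
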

\begin{proof}
Since $\uifnal(\chf{A})=1$, there exists an IEF $\ifnal'\subseteq\ifnal$ such that $\supp (\ifnal')\subseteq A$. Denote $\ifnal'_n = \ifnal'\toper'^n$. By Proposition~\ref{prop-regular-convergence}, the limit $\ifnal[M]'_A := \lim_{n\to\infty} \ifnal'_n$ exists and is the unique such limit IEF corresponding to $\toper'$. Moreover $\ifnal[M]'_A = \ifnal[M]'_A\toper' \subseteq \ifnal[M]'_A\toper$ and therefore, the limit $\ifnal[M]_A = \lim_{n\to\infty} \ifnal[M]'_A \toper^n$ exists as well, by Proposition~\ref{limit-invar}, and is $\toper$-invariant. We will show that it satisfies \eqref{eq-minimalIEF}. 

The convergence of the sequence $\{ \ifnal'_n\}$ to $\ifnal[M]'_A$ implies that for every $\varepsilon > 0$ there exists an $N\in \NN$ such that $\uifnal'_n(\gamble + \varepsilon\chf{\states}) = \uifnal'_n(\gamble) + \varepsilon \ge \uifnal[M]'_A(\gamble)$ for every $n\ge N$ and every $\gamble \in \allgambles_1$, which is a compact set of gambles. Further, we denote $\ifnal_n = \ifnal \toper^n$, and clearly $\ifnal_n \supseteq \ifnal'_n$ holds. 
Hence, 
\begin{equation*}
\uifnal_{N+k}(\gamble) +  \varepsilon = \uifnal_{N+k}(\gamble +  \varepsilon\chf{\states}) \ge\uifnal'_{N+k}(\gamble +  \varepsilon\chf{\states}) = \uifnal'_{N+k}(\gamble) +  \varepsilon \ge \uifnal[M]'_A (\toper^k \gamble).
\end{equation*}
Letting $k\to\infty$ on both sides we obtain that 
\[ \liminf_{n\to\infty}\uifnal_n(\gamble) + \varepsilon\ge \uifnal[M]_A(\gamble) \]
for every $\varepsilon>0$, and therefore $\liminf_{n\to\infty}\uifnal_n(\gamble) \ge \uifnal[M]_A(\gamble)$ for every $\gamble\in \allgambles_1$, and therefore also for every gamble $f\in \allgambles$. 
\qed
\end{proof}

\begin{corollary}\label{cor-minimalIEF-limit}
	Let $A, \toper'$ and $\ifnal[M]_A$ satisfy conditions as in Corollary~\ref{cor-minimalIEF}, and let $\ifnal$ be an IEF such that 
	\begin{equation*}
		\lim_{n\to\infty}\uifnal(\utoper^n\chf{\sset}) = 1. 
	\end{equation*}
	Then 
	\begin{equation}\label{en-cor-limit-1}
		\liminf_{n\to\infty} \uifnal(\utoper^n\gamble) \ge \uifnal[M]_{\sset}(\gamble)
	\end{equation}
	for every gamble $\gamble$.
\end{corollary}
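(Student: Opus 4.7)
The plan is to reduce the statement to Corollary~\ref{cor-minimalIEF} by a small-perturbation argument that absorbs the slack between having $\uifnal(\utoper^n\chf{\sset})\to 1$ only in the limit and having $\uifnal(\chf{\sset})=1$ outright. First I would fix $\varepsilon>0$, choose $N\in\NN$ with $\uifnal(\utoper^N\chf{\sset})\ge 1-\varepsilon$, and pick a precise expectation $P^*\in\ifnal\toper^N$ that attains the upper on $\chf{\sset}$, so that $P^*(\chf{\sset})\ge 1-\varepsilon$. I would then decompose $P^*=\alpha Q_{\sset}+(1-\alpha)Q_{\sset^c}$, where $\alpha=P^*(\chf{\sset})\ge 1-\varepsilon$ and $Q_{\sset}, Q_{\sset^c}$ are the normalized restrictions of $P^*$ to $\sset$ and $\states\setminus\sset$ respectively (the second summand drops out when $\alpha=1$).

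The singleton IEF $\{Q_{\sset}\}$ satisfies the hypotheses of Corollary~\ref{cor-minimalIEF} exactly, giving $\liminf_{k\to\infty} Q_{\sset}(\utoper^k\gamble)\ge \uifnal[M]_{\sset}(\gamble)$ for every gamble $\gamble$. Combining this with
\[ \uifnal(\utoper^{N+k}\gamble)=\uifnal_N(\utoper^k\gamble)\ge P^*(\utoper^k\gamble)=\alpha Q_{\sset}(\utoper^k\gamble)+(1-\alpha)Q_{\sset^c}(\utoper^k\gamble), \]
and using the iterated boundedness estimate $\|\utoper^k\gamble\|_\infty\le\|\gamble\|_\infty$ (property~(i) of upper expectations applied at each step), the residual $\sset^c$-mass and the $\alpha$-discount together cost at most $2\varepsilon\|\gamble\|_\infty$, so that
\[ \liminf_{n\to\infty}\uifnal(\utoper^n\gamble)\ge \uifnal[M]_{\sset}(\gamble)-2\varepsilon\|\gamble\|_\infty. \]
Letting $\varepsilon\to 0$ yields \eqref{en-cor-limit-1}.

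The only delicate point is that the residual bound $2\varepsilon\|\gamble\|_\infty$ must be uniform in $k$ -- otherwise the inner $\liminf_{k\to\infty}$ could not be interchanged with the outer $\varepsilon\to 0$. This uniformity follows immediately from the fact that $\|\utoper^k\gamble\|_\infty$ does not grow with $k$. I expect no deeper obstacle, since the substantive work -- passing from a $\toper'$-regularly-absorbing chain to the full ITO $\toper$ -- has already been packaged in Corollary~\ref{cor-minimalIEF}.
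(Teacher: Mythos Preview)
Your proof is correct and follows essentially the same line as the paper's: both pick $N$ with $\uifnal(\utoper^N\chf{\sset})\ge 1-\varepsilon$, decompose a precise functional in $\ifnal\toper^N$ via conditioning on $\sset$ versus $\sset^c$, apply Corollary~\ref{cor-minimalIEF} to the $\sset$-part, and let $\varepsilon\to 0$. The only cosmetic difference is that the paper carries the discount as a multiplicative factor $(1-\varepsilon)\uifnal[M]_{\sset}(\gamble)$ while you carry it as an additive error $2\varepsilon\|\gamble\|_\infty$; your version is in fact a little cleaner for gambles that are not nonnegative.
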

\begin{proof}
Let us first show that, for some constant $\alpha$,  $\uifnal[F](\chf{\sset}) \ge \alpha$ implies that 
\begin{equation}\label{eq-minimalIEF-limit-1}
		\liminf_{n\to\infty}\uifnal[F](\utoper^n \gamble) \ge \alpha\uifnal[M]_{\sset}(\gamble)
	\end{equation}
	for every gamble $\gamble$.
By Corollary~\ref{cor-minimalIEF} we have that 
	\begin{equation}\label{eq-prop-alfa-1}
		\liminf_{n\to\infty}\fnal(\utoper^n \gamble) \ge \uifnal[M]_{\sset}(\gamble)
	\end{equation}
	for every (imprecise) expectation functional $\fnal$ such that $\fnal(\chf{\sset}) = 1$.	
	Now suppose that $\uifnal[F](\chf{\sset}) \ge \alpha$. Then there exists some $\fnal\in\ifnal[F]$ such that $\fnal(\chf{\sset}) \ge \alpha$, and by definition we have that $\fnal(\utoper^n \gamble) \le \uifnal[F](\utoper^n\gamble)$ for every $n\in \NN$. Moreover, we have that 
	\begin{equation*}
		\fnal(\utoper ^n \gamble) = \fnal(\chf{\sset})\fnal(\utoper^n\gamble|\sset) + \fnal(\chf{\sset^c}) \fnal(\utoper^n\gamble|\sset^c),
	\end{equation*}
	where $\fnal(\cdot|A)$ is the conditional expectation functional. Hence, 
	\begin{equation*}
		\fnal(\utoper ^n \gamble) \ge \alpha \fnal(\utoper^n\gamble|\sset) = \alpha \fnal(\cdot|\sset)(\utoper^n\gamble).
	\end{equation*}
	However, by \eqref{eq-prop-alfa-1}, $\liminf_{n\to\infty}\fnal(\cdot|\sset)(\utoper^n\gamble)\ge \uifnal[M]_{\sset}(\gamble)$, whence
	\begin{equation*}
		\liminf_{n\to\infty} \uifnal[F](\utoper^n\gamble) \ge \liminf_{n\to\infty} \fnal(\utoper^n\gamble) \ge \alpha \uifnal[M]_{\sset}(\gamble).
	\end{equation*}
	and this proves \eqref{eq-minimalIEF-limit-1}.
Now, for every $\varepsilon>0$ there is some $n\in\NN$ such that $\uifnal(\utoper^n\chf{\sset}) > 1-\varepsilon$. Hence, for every gamble $\gamble$,
	\begin{equation*}
		\liminf_{k\to\infty} \uifnal (\utoper^n\utoper^kf) \ge (1-\varepsilon)\uifnal[M]_{\sset}(\utoper^n\gamble) = (1-\varepsilon)\uifnal[M]_{\sset}(\gamble).
	\end{equation*}
Since the above holds for every $\varepsilon>0$, inequality \eqref{en-cor-limit-1} follows.
\qed
\end{proof}

\begin{corollary}\label{cor-mpc-limit}
Let $B$ be a minimal permanent class. Then a $\toper$-invariant IEF $\ifnal[M]_B$ exists such that for any IEF $\ifnal$ with 
\begin{equation}\label{eq-limit-one}
\lim_{n\to\infty} \uifnal(\utoper^n \chf{B}) = 1. 
\end{equation} 
the inequality
\begin{equation}\label{eq-lmin}
\liminf_{n\to\infty}\uifnal(\utoper^n \gamble) \ge \utoper[M]_B (\gamble)
\end{equation} 
holds for every gamble $\gamble$.
\end{corollary}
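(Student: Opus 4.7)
The plan is to reduce the claim to Corollary~\ref{cor-minimalIEF-limit} applied to a sufficiently high power of $\toper$, and then manufacture a genuinely $\toper$-invariant IEF from the resulting $\toper^{rk}$-invariant one by averaging over the orbit. By Proposition~\ref{prop-mpc-abs-reg} there exist positive integers $r$ and $k$ such that $B$ is absorbing and regular with respect to the weak accessibility relation induced by $(\toper^r_B)^k$, and since $\toper^r_B\subseteq \toper^r$ we have $(\toper^r_B)^k\subseteq \toper^{rk}$. I therefore apply Corollary~\ref{cor-minimalIEF-limit} with the base ITO taken to be $\toper^{rk}$, with sub-ITO $\toper'=(\toper^r_B)^k$ and set $A=B$. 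This yields a $\toper^{rk}$-invariant IEF, which I denote by $\ifnal[\tilde M]_B$, with the property that every IEF $\ifnal[F]$ satisfying $\lim_n\uifnal[F](\utoper^{rkn}\chf{B})=1$ also satisfies $\liminf_n\uifnal[F](\utoper^{rkn}\gamble)\ge\uifnal[\tilde M]_B(\gamble)$ for every gamble $\gamble$.

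Next I would lift this $\toper^{rk}$-based liminf to a full $\toper$-based liminf. For each $j\in\{0,1,\ldots,rk-1\}$ the shifted IEF $\ifnal\toper^j$ still meets the hypothesis, since $\uifnal(\utoper^{rkn+j}\chf{B})\to 1$, so applying the bound to $\ifnal\toper^j$ gives $\liminf_n\uifnal(\utoper^{rkn+j}\gamble)\ge\uifnal[\tilde M]_B(\gamble)$ for every such $j$ and every gamble. The subsequences $\{rkn+j\}_n$ for $j=0,\ldots,rk-1$ partition the positive integers, so the minimum of their liminfs equals the full-sequence liminf, yielding $\liminf_n\uifnal(\utoper^n\gamble)\ge\uifnal[\tilde M]_B(\gamble)$. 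Replacing $\gamble$ by $\utoper^i\gamble$ and using the shift invariance $\liminf_n\uifnal(\utoper^{n+i}\gamble)=\liminf_n\uifnal(\utoper^n\gamble)$ upgrades this to $\liminf_n\uifnal(\utoper^n\gamble)\ge\uifnal[\tilde M]_B(\utoper^i\gamble)$ for every $i\in\{0,\ldots,rk-1\}$.

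Finally I define $\ifnal[M]_B$ to be the closed convex hull of $\bigcup_{j=0}^{rk-1}\ifnal[\tilde M]_B\toper^j$, so that $\uifnal[M]_B(\gamble)=\max_{0\le j<rk}\uifnal[\tilde M]_B(\utoper^j\gamble)$; the previous paragraph then immediately delivers inequality~\eqref{eq-lmin}. For $\toper$-invariance I use that $\ifnal[\tilde M]_B\toper^{rk}=\ifnal[\tilde M]_B$, so the family $\{\ifnal[\tilde M]_B\toper^j:0\le j<rk\}$ is cyclically permuted by postcomposition with $\toper$; combining this with Proposition~\ref{convex-ief} (which guarantees that $\toper$ commutes with convex combinations, and by continuity with closed convex hulls) gives $\ifnal[M]_B\toper=\ifnal[M]_B$. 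The main point that requires care is this orbit-averaging step, since it is what converts a periodic invariant into a genuinely $\toper$-invariant IEF while preserving the liminf bound. A secondary technical check is that $(\toper^r_B)^k$ really qualifies as a sub-ITO of $\toper^{rk}$ in the sense required by Corollary~\ref{cor-minimalIEF-limit}, which ultimately reduces to the observation that it is a closed convex set of transition operators absorbing $B$ after every $r$-fold step.
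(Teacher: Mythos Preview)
Your argument is correct and follows the paper's strategy closely through the first two paragraphs: reduce to $\toper^{rk}$ via Proposition~\ref{prop-mpc-abs-reg}, invoke Corollary~\ref{cor-minimalIEF-limit} for that power, and then lift the liminf bound to the full sequence by running over the residues $j=0,\ldots,rk-1$. The paper does exactly this.

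The one genuine divergence is in how you obtain $\toper$-invariance. You build $\ifnal[M]_B$ as the closed convex hull of the orbit $\{\ifnal[\tilde M]_B\toper^j:0\le j<rk\}$ and argue invariance from the cyclic permutation of this orbit. The paper instead shows that the $\toper^{rk}$-invariant IEF $\ifnal[\tilde M]_B$ is \emph{already} $\toper$-invariant, with no averaging required: applying the liminf bound \eqref{eq-lmin} to $\ifnal=\ifnal[\tilde M]_B$ itself gives $\uifnal[\tilde M]_B(\utoper\gamble)\ge\uifnal[\tilde M]_B(\gamble)$ for all $\gamble$, hence the chain
\[
\ifnal[\tilde M]_B\subseteq \ifnal[\tilde M]_B\toper\subseteq\cdots\subseteq \ifnal[\tilde M]_B\toper^{rk}=\ifnal[\tilde M]_B,
\]
forcing all inclusions to be equalities. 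This sandwich argument is shorter and shows something stronger: every member of your orbit coincides with $\ifnal[\tilde M]_B$, so your convex hull collapses to the original set. Your construction is not wrong, just redundant in this setting; the orbit-averaging idea would be the natural move if one only knew $\toper^{rk}$-invariance and nothing more, but here the liminf inequality itself supplies the missing monotonicity.
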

\begin{proof}
	By Proposition~\ref{prop-mpc-abs-reg}, $B$ is absorbing and regular with respect to $(\toper_B^r)^k\subseteq \toper^{rk}$. Therefore, by Corollary~\ref{cor-minimalIEF-limit}, IEF $\ifnal[M]_B$ exists such that 	
	\[ \liminf_{n\to\infty} \uifnal(\utoper^{nkr}\gamble) \ge \uifnal[M]_B (\gamble) \]
	for every IEF $\ifnal$ satisfying \eqref{eq-limit-one}, and therefore also for $\ifnal \toper^m$, where $m$ is arbitrary integer. Thus we have	
	\[ \liminf_{n\to\infty} \uifnal(\utoper^{m+nkr}\gamble) = \liminf_{n\to\infty} \uifnal\utoper^m(\utoper^{nkr}\gamble)\ge \uifnal[M]_B (\gamble). \]
It remains to prove that $\ifnal[M]_B$ is $\toper$-invariant. By Corollary~\ref{cor-minimalIEF}, it is $\toper^{rk}$-invariant. Therefore $\ifnal[M]_B\toper = \ifnal[M]_B \toper^{nkr + 1}$. Replacing $\ifnal$ with $\ifnal[M]_B$ in \eqref{eq-lmin}, and using the above identity imply that $\ifnal[M]_B(\toper \gamble) \ge \ifnal[M]_B(\gamble)$ for every gamble, which implies the inclusions $\ifnal[M]_B\subseteq \ifnal[M]_B\toper \subseteq \cdots \subseteq \ifnal[M]_B\toper^{kr} = \ifnal[M]_B$. But then all the inclusions must in fact be equalities, particularly, $\ifnal[M]_B= \ifnal[M]_B\toper$ must hold. 
\qed
\end{proof}

\begin{theorem}\label{thm-main-2}
	Let $\ifnal$ be an IEF on $\states$ such that the limit 
	\begin{equation*}
		\lim_{n\to\infty} \uifnal(\utoper^n\chf{B}) =: \uifnal_{\infty}(\chf{B})
	\end{equation*}
	exists for every minimal permanent class $B$, and $\uifnal_{\infty}(\chf{B})\in \{ 0, 1\}$. Then the limit 
	\begin{equation*}
		\lim_{n\to\infty} \uifnal(\utoper^n\gamble) =: \uifnal_{\infty}(\gamble)
	\end{equation*}
	exists for every gamble $\gamble$. Moreover, $\ifnal_{\infty}$ is the unique $\toper$-invariant IEF such that $\uifnal_{\infty}(\chf{B}) = 1$ for every minimal permanent class $B\subseteq S_{\ifnal}$. 
\end{theorem}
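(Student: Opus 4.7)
The strategy is to identify the candidate limit as $\ifnal[M] := \lim_{n\to\infty} \ifnal[V]_{\ifnal} \toper^n$, with $\ifnal[V]_{\ifnal}$ the vacuous IEF on $S_{\ifnal}$ from Definition~\ref{def-SE}, and then sandwich $\ifnal_n := \ifnal \toper^n$ between this $\ifnal[M]$ from above and the family $\{\ifnal[M]_B\}_{B \subseteq S_{\ifnal}}$ delivered by Corollary~\ref{cor-mpc-limit} from below. By Proposition~\ref{prop-least-committal} the limit $\ifnal[M]$ exists and is $\toper$-invariant; because $\ifnal[V]_{\ifnal} \toper^n$ is set-wise decreasing while Proposition~\ref{prop-monotonicity-psi} applied along $B \sacs[r] B$ keeps $\uifnal[V]_{\ifnal}(\utoper^{mr} \chf{B}) = 1$ for every $B \subseteq S_{\ifnal}$, one obtains $\uifnal[M](\chf{B}) = 1$ for such $B$, and Corollary~\ref{cor-mpc-contained} together with the absorbing property of $S_{\ifnal}$ (Proposition~\ref{prop-least-committal}) gives $\uifnal[M](\chf{B}) = 0$ for $B \not\subseteq S_{\ifnal}$. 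The same two arguments applied to $\ifnal$ itself verify that the hypothesised limit $\uifnal_{\infty}(\chf{B})$ equals $1$ exactly when $B \subseteq S_{\ifnal}$.

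The upper bound is immediate: $\supp(\ifnal) \subseteq S_{\ifnal}$ yields $\ifnal \subseteq \ifnal[V]_{\ifnal}$, whence $\uifnal_n(f) \le \uifnal[V]_{\ifnal}(\utoper^n f) \searrow \uifnal[M](f)$ and $\limsup_n \uifnal_n(f) \le \uifnal[M](f)$ for every gamble. For the lower bound, each minimal permanent class $B \subseteq S_{\ifnal}$ satisfies the hypothesis of Corollary~\ref{cor-mpc-limit}, which furnishes a $\toper$-invariant $\ifnal[M]_B$ with $\liminf_n \uifnal_n(f) \ge \uifnal[M]_B(f)$; taking the maximum over such $B$ gives $\liminf_n \uifnal_n(f) \ge L(f) := \max_{B \subseteq S_{\ifnal}} \uifnal[M]_B(f)$.

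The hard part is closing the sandwich by proving $L(f) = \uifnal[M](f)$. The approach is to introduce $\tilde\ifnal[M] := \overline{\mathrm{conv}}\bigl(\bigcup_{B \subseteq S_{\ifnal}} \ifnal[M]_B\bigr)$; by Proposition~\ref{convex-ief} this is a $\toper$-invariant IEF, its upper expectation equals $L$, and, using $\uifnal[M]_B(\chf{B}) = 1$ together with $\supp(\ifnal[M]_B) \subseteq S_{\ifnal}$ and Corollary~\ref{cor-mpc-contained}, $\tilde\ifnal[M]$ is extremal with the same values as $\ifnal[M]$ on every minimal permanent class. The remaining delicate step is to apply Theorem~\ref{thm-main-1} to identify $\tilde\ifnal[M]$ with $\ifnal[M]$: one needs $\supp(\ifnal[M]) = \supp(\tilde\ifnal[M])$, for which the $\supseteq$ direction uses that $\uifnal[M](\chf{B}) = 1$ together with Proposition~\ref{invariant-absorbing} forces $B \subseteq \supp(\ifnal[M])$ for each $B \subseteq S_{\ifnal}$, while the reverse inclusion requires showing, by tracking supports along the monotone contracting sequence $\ifnal[V]_{\ifnal} \toper^n$, that no state survives in $\supp(\ifnal[M])$ except those reachable from minimal permanent classes contained in $S_{\ifnal}$. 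Once $\tilde\ifnal[M] = \ifnal[M]$ is established the sandwich collapses to $\lim_n \uifnal_n(f) = \uifnal[M](f)$, and the stated uniqueness of $\ifnal_{\infty}$ then follows from Theorem~\ref{thm-main-1} applied to any other candidate invariant IEF carrying the same values on minimal permanent classes in $S_{\ifnal}$.
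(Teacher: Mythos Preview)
Your proof follows essentially the same architecture as the paper's: the lower bound $\liminf_n \uifnal_n(f) \ge L(f) = \max_{B \subseteq S_{\ifnal}} \uifnal[M]_B(f)$ via Corollary~\ref{cor-mpc-limit}, the upper bound $\limsup_n \uifnal_n(f) \le \uifnal[M](f)$ via $\ifnal \subseteq \ifnal[V]_{\ifnal}$, and then closing the sandwich by invoking Theorem~\ref{thm-main-1} to identify the IEF with upper expectation $L$ (your $\tilde{\ifnal[M]}$, which is exactly the paper's $\ifnal[M]_{\ifnal}$) with $\ifnal[M]$. The paper also verifies first that the hypothesised limit $\uifnal_{\infty}(\chf{B})$ equals $1$ precisely for $B\subseteq S_{\ifnal}$, using Proposition~\ref{prop-monotonicity-psi}, just as you do.

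The only substantive difference is in how the identification $\tilde{\ifnal[M]}=\ifnal[M]$ is argued. The paper dispatches it in one line: since $\ifnal[M]_{\ifnal}$ is $\toper$-invariant and extremal with $\uifnal[M]_{\ifnal}(\chf{B})=1$ for every minimal permanent $B\subseteq S_{\ifnal}$, Theorem~\ref{thm-main-1} forces it to equal $\lim_n \ifnal[V]_{\ifnal}\toper^n$. You instead unpack what that appeal really requires, namely $S_{\tilde{\ifnal[M]}}=S_{\ifnal[M]}$ (equivalently, since both are $\toper$-invariant, $\supp(\tilde{\ifnal[M]})=\supp(\ifnal[M])$), and you sketch a support-tracking argument for the nontrivial inclusion. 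Your caution is reasonable---the paper's single sentence does leave this verification implicit---but the overall route is the same, and your sketch of the missing inclusion is itself only an outline rather than a complete argument.
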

\begin{proof}
	Let $B$ be a minimal permanent class such that 
	\begin{equation}\label{eq-mpc-upex}
		\lim_{n\to\infty} \uifnal(\utoper^n\chf{B}) = 1. 
	\end{equation}	 
	Then by Corollary~\ref{cor-mpc-limit} we have that $\liminf_{n\to\infty}\uifnal(\utoper^n \gamble) \ge \utoper[M]_B (\gamble)$. Let us show that this holds for every minimal permanent class $B\subseteq S_{\ifnal}$. Suppose contrary that there is some $B \subseteq S_{\ifnal}$ such that $\lim_{n\to\infty}\uifnal(\utoper^n\chf{B})\neq 1$. Then, by the assumption, $\lim_{n\to\infty} \uifnal(\utoper^n\chf{B}) = 0$. But, by definition of $S_{\ifnal}$, there must be some $m\ge 0$ such that $\uifnal(\utoper^m\chf{B})>0$, which by Proposition~\ref{prop-monotonicity-psi} implies that $\uifnal(\utoper^{m+n}\chf{B}) \ge \uifnal(\utoper^m\chf{B})$, and therefore the limit as $n\to\infty$ cannot be $0$. This contradiction confirms that \eqref{eq-mpc-upex} holds for every minimal permanent class $B\subseteq S_{\ifnal}$.

	By the above, for every minimal permanent class $B\subseteq S_{\ifnal}$ we have that $\liminf_{n\to\infty}\uifnal(\utoper^n \gamble) \ge \utoper[M]_B (\gamble)$. Let us define 
	\begin{equation*}
	\uifnal[M]_{\ifnal}(\gamble) = \max_{B\subseteq S_{\ifnal}} \uifnal[M]_B(\gamble), 
	\end{equation*}
	which is an IEF with $\uifnal[M]_{\ifnal}(\chf{B}) = 1$ for every $B\subseteq S_{\ifnal}$. Moreover, since 
	\begin{equation*}
	\uifnal[M]_{\ifnal}(\utoper\gamble) = \max_{B\subseteq S_{\ifnal}}\uifnal[M]_B(\utoper \gamble) = \max_{B\subseteq S_{\ifnal}}\uifnal[M]_B(\gamble) = \uifnal[M]_{\ifnal}(\gamble). 
	\end{equation*}
it is a $\toper$-invariant IEF, and by Theorem~\ref{thm-main-1}, it must therefore be the unique $\toper$-invariant IEF with these properties on $S_{\ifnal}$. Hence, 
	\[ \ifnal[M]_{\ifnal} = \lim_{n\to\infty} \ifnal[V]_{\ifnal}\toper^n,  \]
	where $\ifnal[V]_E$ is the vacuous IEF on $S_{\ifnal}$. For the vacuous IEF we have that $\ifnal\subseteq \ifnal[V]_{\ifnal}$, and therefore, for every gamble $\gamble$ we have that 
	\[ \liminf_{n\to\infty} \uifnal(\utoper^n\gamble) \ge \uifnal[M]_{\ifnal}(\gamble) = \lim_{n\to\infty} \uifnal[V]_{\ifnal}(\utoper^n\gamble) \ge \limsup_{n\to\infty} \uifnal(\utoper^n\gamble). \]
	Hence, $\liminf_{n\to\infty} \uifnal(\utoper^n\gamble)= \limsup_{n\to\infty} \uifnal(\utoper^n\gamble) = \lim_{n\to\infty} \uifnal(\utoper^n\gamble) = \uifnal[M]_{\ifnal}(\gamble)$.
\qed
\end{proof}
Let us conclude this section with an example. 
\begin{example}
	Let $\states$ be a set of states partitioned into $\states_1, \states_2$ and $\states_3$. Suppose that an ITO can be decomposed in the following matrix form:
	\begin{equation*}
		\toper = \begin{pmatrix}
			\toper_{11} & 0 & 0 \\ 
			\toper_{21} & \toper_{22} & 0 \\ 
			\toper_{31} & \toper_{32} & \toper_{33} 
		\end{pmatrix}
	\end{equation*}
	where $\utoper_{ii}\chf{\states_i}(\state) = 1$ for every $i=1,2,3$ and every $\state\in\states_i$.  Moreover, assume that $\toper$ induces a regular weak accessibility relation on every $\states_i$. Then $\states_i$ are all minimal permanent classes. If $\utoper_{ij}=0$ whenever $i\neq j$, then we can have that $\uifnal(\chf{\states_i})$ equals $0$ or $1$ independently of each other, with the only condition that $\uifnal(\chf{\states}) = 1$. Therefore we have seven different extremal $\toper$-invariant IEFs. 
	
	However, in the case where $\utoper_{ij}>0$ for some $i\neq j$, $\uifnal(\chf{\states_i})$ clearly implies that $\lim_{n\to\infty} \uifnal(\utoper^n\chf{\states_j})=1$ as well, which restricts the number of possible distinct extremal IEFs. 
	
	In addition to the extremal $\toper$-invariant IEFs, we can obtain the non-extremal ones by forming convex combinations. It is one of the challenges left to explore, whether every non-extremal $\toper$-invariant IEF is equal to such a convex combination. If the answer is positive, this would allow us to characterise all $\toper$-invariant IEFs. 
\end{example}

\section{Conclusions}
The first main contribution of this paper is the introduction of the so-called strong accessibility relation which together with the weak relation, defined earlier by \citet{decooman-2008-a}, allows a detailed analysis of the behaviour of imprecise Markov chains. We have identified the minimal sets of states, named the minimal permanent classes, that correspond to essential communication classes in the classical theory. They are, similarly, the minimal sets of states where the entire probability mass can be concentrated and possibly remains there forever. 

Once the minimal irreducible building blocks had been identified, we could describe and analyse the invariant distributions whose probability mass is entirely concentrated within the minimal permanent classes, and this is also the point of the crucial difference between precise and imprecise theory. While in the precise theory, the entire probability mass can be in one class at the time only, it can 'possibly' be in several classes when the 'imprecise interpretation' is adopted. Therefore the behaviour of an imprecise Markov chain cannot be completely reduced to the behaviour within minimal permanent classes.  Instead, we study the so-called extremal imprecise expectation functionals, which allow the probability mass to be concentrated entirely in one or more permanent classes at the time. 

The extremal imprecise expectation functionals seem to be the minimal objects in the class if imprecise expectation functionals that cannot be further reduced to simpler constituent parts. Our main convergence result in Theorem~\ref{thm-main-1} then shows that when an invariant imprecise distribution is extremal then it is uniquely determined with the corresponding set of possible minimal permanent classes. Moreover, as shown in Theorem~\ref{thm-main-2}, the unique convergence to extremal invariant distributions is ensured already with the convergence of the restrictions to the characteristic functions of minimal permanent classes to either $0$ or $1$. 

Convex combinations of extremal invariant imprecise expectation functionals are the limits of the same combinations of the initial imprecise expectation functionals. Two important challenges that remain subject to further work arise. First is to explore whether every invariant imprecise expectation functional is only a convex combination of a family of extremal ones, and the related question is whether unique convergence when restricted to minimal permanent classes is enough for unique convergence of imprecise Markov chain. Another challenge is the analysis of periodic behaviour in imprecise Markov chains. 

\bibliography{refer}

\begin{thebibliography}{9}
\providecommand{\natexlab}[1]{#1}
\providecommand{\url}[1]{\texttt{#1}}
\expandafter\ifx\csname urlstyle\endcsname\relax
  \providecommand{\doi}[1]{doi: #1}\else
  \providecommand{\doi}{doi: \begingroup \urlstyle{rm}\Url}\fi

\bibitem[Crossman and \v{S}kulj(2010)]{crossman:2010}
R.J. Crossman and D.~\v{S}kulj.
\newblock Imprecise {M}arkov chains with absorption.
\newblock \emph{International Journal of Approximate Reasoning}, 51:\penalty0
  1085--1099, 2010.
\newblock ISSN 0888-613X.
\newblock \doi{http://dx.doi.org/10.1016/j.ijar.2010.08.008}.

\bibitem[de~Cooman et~al.(2009)de~Cooman, Hermans, and
  Quaeghebeur]{decooman-2008-a}
G.~de~Cooman, F.~Hermans, and E.~Quaeghebeur.
\newblock Imprecise {M}arkov chains and their limit behavior.
\newblock \emph{{Probability in the Engineering and Informational Sciences}},
  {23}\penalty0 ({4}):\penalty0 {597--635}, 2009.
\newblock ISSN {0269-9648}.
\newblock \doi{{10.1017/S0269964809990039}}.

\bibitem[Hartfiel(1998)]{hart:98}
D.J. Hartfiel.
\newblock \emph{Markov Set-Chains}.
\newblock Springer-Verlag, Berlin, 1998.

\bibitem[Hartfiel and Seneta(1994)]{hart:94}
D.J. Hartfiel and E.~Seneta.
\newblock On the theory of {M}arkov set-chains.
\newblock \emph{Advances in Applied Probability}, 26\penalty0 (4):\penalty0
  947--964, 1994.

\bibitem[Seneta(2006)]{Seneta2006}
E.~Seneta.
\newblock \emph{{Non-Negative Matrices and {M}arkov Chains}}.
\newblock Springer, 2006.

\bibitem[\v{S}kulj(2009)]{skulj:09}
D.~\v{S}kulj.
\newblock Discrete time {M}arkov chains with interval probabilities.
\newblock \emph{International Journal of Approximate Reasoning}, 50\penalty0
  (8):\penalty0 1314--1329, 2009.
\newblock ISSN 0888-613X.
\newblock \doi{http://dx.doi.org/10.1016/j.ijar.2009.06.007}.

\bibitem[\v{S}kulj and Hable(2012)]{skulj:hable:12}
D.~\v{S}kulj and R.~Hable.
\newblock Coefficients of ergodicity for {M}arkov chains with uncertain
  parameters.
\newblock \emph{Metrika}, pages 1--27, 2012.
\newblock ISSN 0026-1335.
\newblock URL \url{http://dx.doi.org/10.1007/s00184-011-0378-0}.
\newblock 10.1007/s00184-011-0378-0.

\bibitem[Walley(1991)]{walley:91}
P.~Walley.
\newblock \emph{Statistical Reasoning with Imprecise Probabilities}.
\newblock Chapman and Hall, London, New York, 1991.

\bibitem[Weichselberger(2001)]{weic:01}
K.~Weichselberger.
\newblock \emph{Elementare Grundbegriffe einer allgemeineren
  Wahrscheinlichkeitsrechnung. I: Intervallwahrscheinlichkeit als umfassendes
  Konzept.}
\newblock Physica-Verlag, Heidelberg, 2001.

\end{thebibliography}
\end{document}